\documentclass[a4paper]{amsart}

\usepackage{amscd,amsthm,amsfonts,amssymb,amsmath,esint}
\usepackage[colorlinks=true]{hyperref}
\usepackage[all]{xy}
\usepackage[OT2,T1]{fontenc}

\usepackage{mathrsfs}

\usepackage{multirow}

\oddsidemargin1.91cm\evensidemargin1.91cm\voffset1.4cm
\textwidth12.0cm\textheight19.0cm

\newcommand{\msa}{\mathscr{A}}\newcommand{\msb}{\mathscr{B}}
\newcommand{\mse}{\mathscr{E}}

    \newcommand{\BC}{{\mathbb {C}}} 
     \newcommand{\BF}{{\mathbb {F}}}

     \newcommand{\BN}{{\mathbb {N}}}
     
    \newcommand{\BQ}{{\mathbb {Q}}} \newcommand{\BR}{{\mathbb {R}}}

     \newcommand{\BZ}{{\mathbb {Z}}}

    \newcommand{\CA}{{\mathcal {A}}} 
    \newcommand{\CC}{{\mathcal {C}}} 
    \newcommand{\CE}{{\mathcal {E}}} 
     \newcommand{\CH}{{\mathcal {H}}}
     
    \newcommand{\CK}{{\mathcal {K}}} \newcommand{\CL}{{\mathcal {L}}}
     
    \newcommand{\CO}{{\mathcal {O}}}

     \newcommand{\fd}{{\mathfrak{d}}}

    \newcommand{\fm}{{\mathfrak{m}}} 
     \newcommand{\fp}{{\mathfrak{p}}}
    \newcommand{\fq}{{\mathfrak{q}}}

    \newcommand{\fM}{{\mathfrak{M}}} 
    \newcommand{\fO}{{\mathfrak{O}}}

    \newcommand{\Gal}{{\mathrm{Gal}}} 
    
    \newcommand{\Hom}{{\mathrm{Hom}}}
    
    \renewcommand{\Im}{{\mathrm{Im}}}

    \newcommand{\ord}{{\mathrm{ord}}} \newcommand{\rank}{{\mathrm{rank}}}

    \renewcommand{\mod}{\ \mathrm{mod}\ }\renewcommand{\Re}{{\mathrm{Re}}}

    \newcommand{\Sel}{{\mathrm{Sel}}}

\DeclareSymbolFont{cyrletters}{OT2}{wncyr}{m}{n}
\DeclareMathSymbol{\Sha}{\mathalpha}{cyrletters}{"58}
    \newcommand{\wh}{\widehat}

    \newcommand{\ov}{\overline}

    \newcommand{\ra}{\rightarrow} 
    \newcommand{\bs}{\backslash}
    \newcommand{\nequiv}{\equiv\hspace{-10pt}/\ }

    \theoremstyle{plain}
    \newtheorem{thm}{Theorem}[section] \newtheorem{cor}[thm]{Corollary}
    \newtheorem{lem}[thm]{Lemma}  \newtheorem{prop}[thm]{Proposition}

\theoremstyle{remark} 
\theoremstyle{remark} 
\theoremstyle{remark} 

    \newcommand{\Neron}{N\'{e}ron~}

    \numberwithin{equation}{section}

\begin{document}

\title[Elliptic Curves with Rank One and Non-Trivial {$\Sha[2]$}]{A classical family of elliptic curves having rank one and the $2$-primary part of their Tate--Shafarevich group non-trivial}

\author{Yukako Kezuka,\quad Yongxiong Li}

\thanks{The first author was supported by the DFG grant: SFB 1085 ``Higher invariants''. The second author is supported by NSFC-11901332}

\begin{abstract}
We study elliptic curves of the form $x^3+y^3=2p$ and  $x^3+y^3=2p^2$ where $p$ is any odd prime satisfying $p\equiv 2\bmod 9$ or $p\equiv 5\bmod 9$. We first show that the $3$-part of the Birch--Swinnerton-Dyer conjecture holds for these curves. Then we relate their $2$-Selmer group to the $2$-rank of the ideal class group of $\BQ(\sqrt[3]{p})$ to obtain some examples of elliptic curves with rank one and non-trivial $2$-part of the Tate--Shafarevich group.
\end{abstract}

\maketitle

\section{Introduction and the main results}

Let $n\geq 3$ be a cube free integer. The elliptic curve
\[C_n\colon x^3+y^3=n\]
is the twist of $C\colon x^3+y^3=1$ by the cubic field $\BQ(\sqrt[3]{n})$, where $\sqrt[3]{n}$ denotes the real root. The study of the arithmetic of $C_n$ is very old. In particular, it is well known that $C_n$ has no non-zero rational torsion, and thus $C_n(\BQ)$ is infinite precisely when $n$ is the sum of two rational cubes.
Let $p$ be any odd prime satisfying
\begin{equation}\label{bcong}
 p \equiv 2 \mod 9 \, \, {\rm or} \, \,  p \equiv 5 \mod 9.
\end{equation}
In \cite{satge}, Satg\'{e} used the theory of Heegner points to show that $C_ {2p}$ has rank 1 when $p\equiv 2 \mod 9$, and $C_{2p^2}$ has rank 1 when $p \equiv 5 \mod 9$. More recently, it was shown in  \cite{CST} that, for all odd primes $p$ satisfying \eqref{bcong} the $3$-part of the Birch--Swinnerton-Dyer conjecture holds for the product of the two curves $C_{2p}\times C_{2p^2}$;  here, we remark that one of the curves in this product is of rank zero, while the other is of rank one.

The first main result of this paper is the proof of the $3$-part of the Birch--Swinnerton-Dyer conjecture for each of the individual  curves $C_{2p}$ and $C_{2p^2}$ for all odd primes $p$ satisfying \eqref{bcong}.  More precisely, we establish the following results. Let $L(C_n, s)$ be the complex $L$-series of $C_n$ over $\BQ$. Since $C_n$ has complex multiplication by the integer ring $\CO_K$ of $K=\BQ(\sqrt{-3})$, the analytic continuation and functional equation of $L(C_n, s)$ are known by Deuring's theorem. Let~$A$ be the elliptic curve with classical Weierstrass equation
\[A\colon y^2=4x^3-1.\]
Then fixing an embedding of $K $ into $\BC$, and noting that $A$ also has complex multiplication by $\CO_K$, the lattice $L$ of complex periods of the differential $dx/y$ on $A$ is of the form $\Omega\CO_K$, with $\Omega = 3.059908...$, a real number. For each integer $n \geq 3$, we then define
\begin{equation}\label{per}
\Omega_n = \Omega/(\sqrt{3}n^{1/3}).
\end{equation}
The rationality of the algebraic part $\frac{L(C_n,1)}{\Omega_n}$ of ${L(C_n,s)}$ at $s=1$ follows from a result of Birch using modular symbols, which deals with any elliptic curve over~$\BQ$. Moreover, Stephens strengthened this result for $C_n$ in \cite{stephens} to show that this is a rational integer.

\begin{thm}\label{main1} Let $p$ be an odd prime satisfying \eqref{bcong}. Then
\begin{enumerate}
  \item If $p\equiv 5\mod 9$, the $L$-values $L(C_{2p},1)$ and $L(C_{2^2p^2},1)$ are both non-zero, and the $3$-part of the Birch--Swinnerton-Dyer conjecture holds for $C_{2p}$ and $C_{2^2p^2}$.  Moreover, we have the following congruence
      \[\frac{L(C_{2p},1)}{3\Omega_{2p}}\equiv \frac{L(C_{2^2p^2},1)}{3\Omega_{2^2p^2}}\equiv 1\mod 3.\]
  \item If $p\equiv 2\mod 9$, the $L$-values $L(C_{2p^2},1)$ and $L(C_{2^2p},1)$ are both non-zero, and the $3$-part of the Birch--Swinnerton-Dyer conjecture holds for $C_{2p^2}$ and $C_{2^2p}$. Moreover, we have the following congruence
      \[\frac{L(C_{2^2p},1)}{3\Omega_{2^2p}}\equiv \frac{L(C_{2p^2},1)}{3\Omega_{2p^2}}\equiv 1\mod 3.\]
\end{enumerate}
\end{thm}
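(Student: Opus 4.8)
The plan is to translate everything into Hecke $L$-values over $K=\BQ(\sqrt{-3})$ and then to separate the statement into an analytic part (the stated congruence, which will also yield non-vanishing) and an arithmetic part (the $3$-part of Birch--Swinnerton-Dyer). By Deuring's theorem one has $L(C_n,s)=L(s,\psi_n)$ for a Hecke character $\psi_n$ of $K$ of infinity type $(1,0)$ whose conductor is supported on $\sqrt{-3}$ and the primes dividing $n$. In each case the two relevant curves form a cubic-twist pair $C_m,\,C_{m^2}$: in part~(1) take $m=2p$, so that $C_{2^2p^2}=C_{(2p)^2}=C_{m^2}$, and in part~(2) take $m=2p^2$, so that $C_{2^2p}\cong C_{(2p^2)^2}=C_{m^2}$ over $\BQ$. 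Consequently $\psi_{m^2}=\psi_m\cdot\chi$ for a cubic residue character $\chi$ of $K$, and since such $\chi$ is trivial modulo $\sqrt{-3}$ the two normalised special values ought to agree modulo $3$; this is the structural reason the congruence can hold, and it refines the product statement of \cite{CST} for $C_{2p}\times C_{2p^2}$ to the individual curves.

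For the analytic input I would start from the theorem of Birch and Stephens \cite{stephens} that $\frac{L(C_n,1)}{\Omega_n}$ is a rational integer, and combine it with an explicit evaluation of that integer. Damerell's theorem---equivalently, Birch's modular-symbol formula together with the Eisenstein--Kronecker expression for $L(1,\psi_n)$---writes $\frac{L(C_n,1)}{\Omega_n}$ as a finite twisted sum of cubic residue symbols $\left(\frac{\,\cdot\,}{n}\right)_3$ over representatives modulo the conductor. Reducing this sum in $\CO_K/(\sqrt{-3})^{k}$ is where the hypotheses $p\equiv 2$ or $p\equiv 5\bmod 9$ enter: they fix the cubic residue classes of $2$ and of $p$ and thereby force the sum to be $\equiv 3\bmod 9$. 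This single computation yields at once $\frac{L(C_n,1)}{3\Omega_n}\equiv 1\bmod 3$, the non-vanishing of $L(C_n,1)$, and---through the twist relation $\psi_{m^2}=\psi_m\chi$ with $\chi\equiv 1\bmod\sqrt{-3}$---the asserted equality of the two algebraic values modulo $3$.

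For the arithmetic input I would feed the non-vanishing into the Iwasawa main conjecture for $K$ proved by Rubin, in the form giving the $\fp$-part of the Birch--Swinnerton-Dyer formula for a CM elliptic curve of analytic rank $0$, where $\fp=(\sqrt{-3})$ is the prime of $K$ above $3$. Since $C_n$ has no non-zero rational torsion, the formula reads $\frac{L(C_n,1)}{\Omega_n}=(\ast)\cdot\prod_{\ell}c_\ell\cdot\#\Sha(C_n)$ with $(\ast)$ the ratio of $\Omega_n$ to the \Neron period. Having already shown that the left-hand side has $3$-adic valuation exactly $1$, it remains to run Tate's algorithm at $3$ (where $C_n$ has additive reduction, the discriminant being $\propto n^2$) and at the primes dividing $n$ to evaluate $v_3\big((\ast)\prod_\ell c_\ell\big)$; this pins down $v_3(\#\Sha(C_n))$, which the congruence forces to be $0$, giving the $3$-part of the conjecture.

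The main obstacle is twofold. The delicate analytic point is the reduction modulo $3$ of the explicit $L$-value sum: one must choose representatives modulo the $(\sqrt{-3})$-power conductor with care and control the cubic residue symbols of $2$ and $p$, and it is precisely the sharper congruences $p\equiv 2,5\bmod 9$ (not merely $p\equiv 2\bmod 3$) that make the sum land on $\equiv 3\bmod 9$ rather than on a multiple of $9$. The delicate arithmetic point is that $3$ is the ramified CM prime and $C_n$ has additive reduction there, so the good-reduction hypotheses under which Rubin's theorem is usually stated do not apply directly; I expect to need a $\fp$-adic (anticyclotomic) refinement of the main conjecture, together with a precise comparison of the \Neron period with $\Omega_n$ at $3$, in order to turn the analytic $3$-adic valuation into the statement about $\#\Sha(C_n)$.
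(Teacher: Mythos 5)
Your high-level skeleton for the congruence --- an explicit special value formula for $L(C_n,1)/\Omega_n$ plus the observation that $\psi_{m^2}$ and $\psi_m$ differ by a cubic character trivial modulo $\sqrt{-3}$ --- points in the right direction, and the paper does exploit exactly such a congruence between the two curves in each pair. But there are two genuine gaps. First, the analytic step. The explicit formula is not ``a finite twisted sum of cubic residue symbols'': it is $-\frac{\Omega}{2\sqrt{3}\,p}\sum_c\left(\frac{c}{D}\right)_3\frac{1}{\wp(c\Omega/p)-1}$ (Proposition \ref{ste-f1} and its conductor-$6p$ analogues), and the whole difficulty is the $3$-adic control of the transcendental-looking quantities $\wp(c\Omega/\Delta)-1$. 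The paper gets this from a model $\CA$ of $A$ with good reduction at $3$ over $K(A_{(2-\sqrt{-3})})$ and the Razar--Zhang reduction argument, yielding $\ord_3(\wp(c\Omega/\Delta)-1)=\tfrac{1}{3}$ with leading term independent of $c$ (Lemma \ref{key-val}). Even with this in hand, a direct reduction of the individual sum modulo $9$ does not close: grouping the $c$'s into $\mu_3$-orbits gives only $\ord_3\geq 1$ with error terms of valuation $\tfrac{7}{6}$, too coarse to read off the class modulo $9$ --- this is precisely the obstruction the authors flag in the introduction. The actual proof needs two further inputs you do not supply: Zhao's averaging over \emph{all} divisors $d\mid(2p)^2$ (a trace computation down to $\CH_p=K(\sqrt[3]{2},\sqrt[3]{p})$, which in turn consumes the already-established explicit modulo $3$ BSD for $C_p$ and $C_{p^2}$ from Section \ref{section2}) to pin down the residue class of the \emph{sum} of the two $L$-values, and the character decomposition over $\Gal(\CH_p/H_p)$ to show their \emph{difference} vanishes modulo $3$; only the combination determines each value individually. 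Your ``single computation forces the sum to be $\equiv 3\bmod 9$'' is an assertion, not an argument.

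Second, the arithmetic step rests on a tool that is known to fail here. Rubin's Iwasawa-theoretic proof of the $\fp$-part of BSD for CM curves excludes the primes $2$ and $3$ (the primes dividing the number of roots of unity in $K$), and the paper says so explicitly; you acknowledge the problem but your proposed fix --- an anticyclotomic refinement at the ramified prime above $3$ --- is a hope, not a proof. The paper's route is entirely different and elementary: a $3$-isogeny descent (Proposition \ref{3-descent}), using Satg\'e's computation of the $\phi$- and $\hat{\phi}$-Selmer groups, shows directly that $\Sha(C_{2^ip^j})[3]=0$; combined with the Tamagawa factor $m_3=3$ and the congruence already proved, this gives the $3$-part of BSD with no Iwasawa theory at $3$. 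Without replacing your appeal to Rubin by such a descent (or an equivalent), the arithmetic half of the theorem is not established.
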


We write $\Sha(C_n)$ for the Tate--Shafarevich group of $C_n$ over $\BQ$. Combined with the results in \cite{CST}, we obtain the following corollary of Theorem \ref{main1}.

\begin{cor}\label{main1+}
Let $p$ be an odd prime satisfying \eqref{bcong}.
\begin{enumerate}
  \item If $p\equiv 2\mod 9$, the curve $C_{2p}$ has rank $1$ and $L(C_{2p},s)$ has a simple zero at $s=1$. Furthermore, $\Sha(C_{2p})$ is finite and the $3$-part of the Birch--Swinnerton-Dyer conjecture holds for $C_{2p}$.
  \item If $p\equiv 5\mod 9$, the curve $C_{2p^2}$ has rank $1$ and $L(C_{2p^2},s)$ has a simple zero at $s=1$. Furthermore, $\Sha(C_{2p^2})$ is finite and the $3$-part of the Birch--Swinnerton-Dyer conjecture holds for $C_{2p^2}$.
\end{enumerate}

\end{cor}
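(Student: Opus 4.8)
The plan is to treat case (1) in full and note that case (2) is entirely symmetric, with the two curves interchanged and Theorem \ref{main1}(1) used in place of Theorem \ref{main1}(2). So suppose $p\equiv 2\bmod 9$ and consider the pair $C_{2p}$ and $C_{2p^2}$. By Satg\'{e} \cite{satge} the curve $C_{2p}$ has Mordell--Weil rank one, realized by a Heegner point of infinite order. Since $C_{2p}$ has complex multiplication by $\CO_K$, I would combine this with the sign of the functional equation (which forces the analytic rank to be odd) and the Gross--Zagier formula together with the Kolyvagin/Rubin Euler-system argument in the CM setting, as carried out in \cite{CST}, to conclude that the analytic rank is also one, i.e.\ $L(C_{2p},s)$ has a \emph{simple} zero at $s=1$, and that $\Sha(C_{2p})$ is finite. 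These are precisely the standing hypotheses under which \cite{CST} formulates and proves the $3$-part of the Birch--Swinnerton-Dyer conjecture for the product $C_{2p}\times C_{2p^2}$, where $C_{2p}$ is the rank-one factor and $C_{2p^2}$ the rank-zero factor. Hence the only genuinely new point to establish is the $3$-part of the conjecture for the single curve $C_{2p}$.

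For this I would exploit the fact that every Birch--Swinnerton-Dyer invariant is multiplicative over a product of elliptic curves. Writing the conjectural identity for an elliptic curve $E/\BQ$ of rank $r$ as
\[
\frac{L^{(r)}(E,1)}{r!}=\frac{|\Sha(E)|\,\Omega_E\,\mathrm{Reg}(E)\prod_v c_v(E)}{|E(\BQ)_{\tor}|^2},
\]
we have $L(C_{2p}\times C_{2p^2},s)=L(C_{2p},s)\,L(C_{2p^2},s)$, so the order of vanishing at $s=1$ is the sum of the two orders and the leading coefficient is the product. Likewise the real period, the regulator (the N\'{e}ron--Tate height pairing on the product is block-diagonal), the Tamagawa product and the torsion order of $C_{2p}\times C_{2p^2}$ split as the corresponding products for the two factors, and $\Sha(C_{2p}\times C_{2p^2})\cong \Sha(C_{2p})\times\Sha(C_{2p^2})$. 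Consequently the ``$3$-part'' statement for the product is exactly the sum, in $3$-adic valuations, of the two individual ``$3$-part'' statements.

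Finally, Theorem \ref{main1}(2) gives $L(C_{2p^2},1)\neq 0$, so $C_{2p^2}$ has rank zero, and asserts that the $3$-part of the Birch--Swinnerton-Dyer conjecture holds for $C_{2p^2}$; in $3$-adic valuations this pins down the rank-zero factor completely. Subtracting this known identity from the product identity supplied by \cite{CST} leaves exactly the $3$-part identity for $C_{2p}$, which is what we want. The step requiring real care, and the main obstacle, is the $3$-adic bookkeeping across this product decomposition: one must check that the real period and the regulator of $C_{2p}\times C_{2p^2}$ factor cleanly, with no spurious $3$-adic contribution, that the Tamagawa numbers and torsion orders enter as $3$-adic units or in a controlled fashion, and that the finiteness of the two $\Sha$ groups (already in hand) makes the product decomposition of $\Sha$ exact at the prime $3$. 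Once this compatibility is verified the division argument is formal, and the symmetric treatment of case (2)---dividing the same product identity by the $3$-part identity for the rank-zero curve $C_{2p}$ furnished by Theorem \ref{main1}(1)---yields the remaining assertion.
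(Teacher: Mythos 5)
Your proposal is correct and follows essentially the same route as the paper: establish finiteness of rank and of $\Sha$ for the rank-one curve via Satg\'e, Gross--Zagier and Kolyvagin, invoke the $3$-part of the Birch--Swinnerton-Dyer conjecture for the product $C_{2p}\times C_{2p^2}$ from \cite{CST}, and divide out the $3$-part identity for the rank-zero factor (supplied by Theorem \ref{main1}, i.e.\ by Theorem \ref{mod3-bsd-2p} together with Proposition \ref{3-descent}) to isolate the identity for the rank-one curve. The multiplicative bookkeeping you flag as the delicate step is already built into the form in which \cite{CST} states the product identity (the paper's equation \eqref{bsd-prod}), so no further verification is needed there.
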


\bigskip

The second main result of this paper is on the $2$-part of the Tate--Shafarevich group of the curves $C_{2p}$ and $C_{2p^2}$ in the rank $1$ case studied in Corollary \ref{main1+}. For any abelian group (scheme) $\fM$ (over a base scheme $S$) and a positive integer $m$, we write $\fM[m]$ for the group (scheme) of $m$-torsion of $\fM$. We relate the $2$-Selmer group of these curves to the $2$-part of the ideal class group of the field $L=\BQ(\sqrt[3]{p})$. We remark that such a relation has already been obtained in \cite{cl} for a wide class of elliptic curves without complex multiplication. We can apply the same methods to our curves on noting that their Mordell--Weil groups have trivial torsion subgroups and that for any prime number $\ell$, we have $\bold{\Phi}[2]=0$, where $\bold{\Phi}/\overline{\mathbb{F}}_\ell$ denotes the component group scheme of a corresponding \Neron  model over $\BQ_\ell$. In addition, in our case, we can use this relation to obtain examples of curves with rank $1$ and non-trivial $2$-part of the Tate--Shafarevich group.

Given a number field $F$, let $Cl(F)$ be the ideal class group of $F$. We call the integer $\rank_2\left(Cl(F)\right):=\dim_{\BF_2}(Cl(F)/2 Cl(F))$ the $2$-rank of $Cl(F)$. Here, $\BF_2$ denotes the finite field of two elements. 

\begin{thm}\label{main2}
Let $p$ be an odd prime satisfying \eqref{bcong}.
\begin{enumerate}
\item If $p\equiv 2\mod 9$, we have $\rank_2(Cl(L))\geq 2$ if and only if $\Sha(C_{2p})[2]$ is non-trivial. In particular, in the case $\rank_2(Cl(L))\geq 2$, the curve $C_{2p}$ has rank $1$ and has non-trivial $\Sha(C_{2p})[2]$.

\item If $p\equiv 5\mod 9$, we have $\rank_2(Cl(L))\geq 2$ if and only if $\Sha(C_{2p^2})[2]$ is non-trivial. In particular, in the case $\rank_2(Cl(L))\geq 2$, the curve $C_{2p^2}$ has rank $1$ and has non-trivial $\Sha(C_{2p^2})[2]$.
\end{enumerate}

\end{thm}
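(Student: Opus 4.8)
The plan is to compute the $2$-Selmer group of the relevant curve (namely $C_{2p}$ when $p\equiv 2\bmod 9$, and $C_{2p^2}$ when $p\equiv 5\bmod 9$) following the method of \cite{cl}, and then to extract $\Sha[2]$ from the fundamental exact sequence
\[
0\lra C_n(\BQ)/2C_n(\BQ)\lra \Sel_2(C_n/\BQ)\lra \Sha(C_n)[2]\lra 0.
\]
First I would pass to the Weierstrass model $y^2=x^3-432n^2$. The nontrivial $2$-torsion points have $x$-coordinate a cube root of $432n^2$, so the relevant étale algebra for a complete $2$-descent is $\BQ[x]/(x^3-432n^2)$. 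The point is that in both cases this algebra is exactly the field $L=\BQ(\sqrt[3]{p})$: since $432\cdot(2p)^2=12^3p^2$ and $432\cdot(2p^2)^2=12^3p^4$, the polynomial $x^3-432n^2$ is irreducible and $\BQ[x]/(x^3-432n^2)=\BQ(\sqrt[3]{p^2})=\BQ(\sqrt[3]{p^4})=L$. This is precisely what lets the class group of $L$ enter the descent.

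Next I would record the two hypotheses needed to run the machinery of \cite{cl}: that $C_n(\BQ)$ is torsion-free, which holds for the whole family $C_n$, and that the component group scheme $\Phi$ of the \Neron model of $C_n$ over each $\BQ_\ell$ satisfies $\Phi[2]=0$, which I would verify prime by prime from the reduction types, using the congruences \eqref{bcong} to control the primes $2$, $3$ and $p$. With these inputs, the descent map $P=(x,y)\mapsto x-\theta$, where $\theta$ is the image of the coordinate $x$ in $L$, realizes $\Sel_2(C_n/\BQ)$ inside a group assembled from $\CO_L^{\times}/(\CO_L^{\times})^2$ and the $2$-torsion of $Cl(L)$, cut out by local conditions at the bad primes. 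Carrying out the local computation and accounting for the units of $L$ (which has unit rank one) yields the identity $\dims \Sel_2(C_n/\BQ)=\rank_2(Cl(L))$.

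Feeding this into the exact sequence above, and using Corollary \ref{main1+}, which gives that $C_n$ has rank $1$ with $C_n(\BQ)$ torsion-free so that $\dims C_n(\BQ)/2C_n(\BQ)=1$, I obtain
\[
\dims \Sha(C_n)[2]=\rank_2(Cl(L))-1.
\]
Hence $\Sha(C_n)[2]\neq 0$ if and only if $\rank_2(Cl(L))\geq 2$, which is the asserted equivalence; the ``in particular'' clauses follow immediately, since Corollary \ref{main1+} already provides rank $1$.

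The main obstacle is the final step of the Selmer computation: pinning down the \emph{exact} relation $\dims\Sel_2(C_n/\BQ)=\rank_2(Cl(L))$, rather than merely up to a bounded constant, so that the threshold is precisely $2$. This demands a careful description of the local images at the ramified primes $2$, $3$ and $p$ together with precise bookkeeping of $\CO_L^{\times}$ modulo squares, all compatible with the fact that $L/\BQ$ is a non-Galois cubic. Verifying $\Phi[2]=0$ at the additive primes is the other technical point on which the clean identity rests.
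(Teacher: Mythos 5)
Your overall strategy --- a complete $2$-descent through the cubic field $L$ following \cite{cl}, followed by the exact sequence relating $\Sel_2$, the Mordell--Weil group and $\Sha[2]$ --- is the same as the paper's, and your reduction of the problem to computing $\dims\Sel_2(C_n/\BQ)$ is correct. However, the key identity you assert, $\dims\Sel_2(C_n/\BQ)=\rank_2(Cl(L))$, is false, and the local computation you describe cannot produce it. What the descent actually yields (and what the paper proves in Proposition \ref{rel2}) is a sandwich $N_1\subseteq\Sel_2(E)\subseteq N_2$ with $\dims N_1=k:=\rank_2(Cl(L))$ and $\dims N_2=k+1$: here $N_1$ corresponds to $Cl(L)[2]$ via everywhere-unramified quadratic extensions, the extra dimension in $N_2$ comes from the fundamental unit of $L$, and the image of the local Kummer map at $2$ is only an index-$2$ subgroup of the integral square-norm classes, so the local analysis does not decide whether the unit class lies in the Selmer group. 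The paper resolves this ambiguity by invoking Monsky's $2$-parity theorem: since $E(\BQ)[2]=0$ one has $(-1)^{\dims\Sel_2(E)}=\epsilon(E/\BQ)$, and since the relevant curves have root number $-1$ this forces $\dims\Sel_2(E)=k$ when $k$ is odd and $k+1$ when $k$ is even. Your claimed identity already fails at $k=0$, where it would give $\dims\Sel_2(E)=0$ for a curve of rank one.

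Consequently your formula $\dims\Sha(C_n)[2]=k-1$ is wrong for even $k$; the correct statement is that $\dims\Sha(C_n)[2]$ equals $k-1$ for $k$ odd and $k$ for $k$ even. This is visible in the paper's numerical tables: for $p=113$ one has $k=2$ but $\Sha(C_{2p})[2]\simeq\BZ/2\BZ\oplus\BZ/2\BZ$, not $\BZ/2\BZ$. Fortunately both expressions are positive exactly when $k\geq 2$, so the equivalence you are after survives --- but your proof as written does not establish it, because the step ``carrying out the local computation and accounting for the units yields $\dims\Sel_2=\rank_2(Cl(L))$'' has no valid justification. You need to add the $2$-parity input (together with the fact that the global root number is $-1$, which follows from Corollary \ref{main1+}) to close the gap.
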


We remark that, although we cannot show at present that there are infinitely many primes $p$ satisfying the condition $\rank_2(Cl(L))\geq 2$  in Theorem \ref{main2}, there are many such $p$. Indeed, for $p<1000000$, $1852/13099$ of the primes $p\equiv 2 \bmod 9$ and $1629/13068$ of the primes $p\equiv 5\bmod 9$ satisfy the condition. Some notable numerical examples can be found in Appendix \ref{appendixB}.  We note also that there are infinitely many elliptic curves over $\BQ$ with rank $0$ and non-trivial $2$-part of the Tate--Shafarevitch group (see, for example, \cite{razar2}). However, in the rank $1$ case, it seems that one can so far only find elliptic curves over $\BQ$ with non-trivial $2$-part of the Tate--Shafarevich group via numerical computations. Theorem \ref{main2} thus sheds new theoretical light on the existence of many curves with rank $1$ and non-trivial $2$-part of the Tate--Shafarevich group.

\medskip

In the remainder of this introduction, we explain the structure of the paper and the key ideas involved. In Section \ref{section2}, we will introduce the notion of `explicit modulo $3$ Birch--Swinnerton-Dyer conjecture' and show congruences similar to those in Theorem \ref{main1} for the curves $C_p$ and $C_{p^2}$. The proof of Theorem \ref{main1} will be given in Section \ref{section3}. The main idea of the proof is an extension of Zhao's averaging method \cite{zhao, yuka, qiu} from $2$-adic to $3$-adic. However, a direct application of Zhao's method only gives us the $3$-adic valuation of the sum of the two algebraic $L$-values in Theorem \ref{main1}, and does not give us the $3$-adic valuation of the individual algebraic $L$-value. To overcome this difficulty, we will study the `explicit modulo $3$ Birch--Swinnerton-Dyer conjecture' introduced in Section \ref{section2} to separate the terms in the above sum. A key ingredient in establishing such an `explicit modulo $3$ Birch--Swinnerton-Dyer conjecture' is Zhang's adaptation \cite{szhang} of Razar's method \cite{razar1,razar2} from $2$-adic to $3$-adic. This gives us a precise $3$-adic valuation of certain rational functions on the curve~$A$ which appear naturally in a special value formula (see \cite{coates1}) for $L(C_{2^ip^j},1)$ for $i,j\in \{0,1,2\}$. Combining Zhao's method, the results from Section \ref{section2} and congruences between the $L$-values of $C_{2p}$ and $C_{2^2p^2}$ (resp.\ $C_{2p^2}$ and $C_{2^2p}$) for $p\equiv 5\mod 9$ (resp.\ $p\equiv 2\mod 9$) gives us Theorem \ref{main1}.
In Section \ref{section4}, we will prove Theorem \ref{main2} by a $2$-descent argument. In Appendix \ref{appendixB}, we will give some numerical examples which satisfy the condition  $\rank_2(Cl(L))\geq 2$ in Theorem~\ref{main2}.

\medskip

\textbf{Notation and conventions}

Throughout the paper, we will have $K=\BQ(\sqrt{-3})$ and a fixed embedding of $K$ into $\BC$.  We write $\CO_K$ for the integer ring of $K$, and for every ideal $(b)=b\CO_K$ of $K$, $K(b)$ will denote the ray class field over $K$ modulo $(b)$. An odd prime~$p$ is always assumed satisfying the condition \eqref{bcong}, and we write $H_p=K(\sqrt[3]{p})$ and $\CH_p=K(\sqrt[3]{p},\sqrt[3]{2})$. If $m\geq 1$ is an integer, $\mu_m$ will denote the group of $m$-th roots of unity. For any cube free integer $n\geq 3$, we write $C_n$ for the elliptic curve $x^3+y^3=n$.  Let $A:y^2=4x^3-1$ with a period lattice $L=\Omega \CO_K$, where $\Omega=3.059908...$ is a real number. For each integer $n\geq 1$, we set $\Omega_n=\frac{\Omega}{(\sqrt{3}n^{\frac{1}{3}})}$. Let $L(C_n,s)$ be the complex $L$-series of $C_n$ over $\BQ$. We call $\frac{L(C_n,1)}{\Omega_n}$ the algebraic part of $L$-value of $C_n$ at $s=1$. As usual, $\BQ_3$ will denote the field of $3$-adic numbers, and $\BZ_3$ the ring of $3$-adic integers in $\BQ_3$. We write $\ov{\BQ}_3$ for a fixed algebraic closure of $\BQ_3$, and $\ov{\BZ}_3$ will denote the integer ring of $\ov{\BQ}_3$. Given $\alpha\in \ov{\BQ}_3$, we write $\alpha\in 3^{\epsilon_0}(\epsilon_1+3^{\epsilon_2}\ov{\BZ}_3)$ for three non-negative rational numbers $\epsilon_i$ $(i=0,1,2)$ if the number $(\alpha/3^{\epsilon_0})-\epsilon_1$ is divisible by $3^{\epsilon_1}$ in $\ov{\BZ}_3$. Finally, we fix a normalized additive $3$-adic valuation $\ord_3$ on $\ov{\BQ}_3$ such that $\ord_3(3)=1$.

\section{Explicit Modulo $3$ Birch--Swinnerton-Dyer Conjecture for $C_p$ and $C_{p^2}$}\label{section2}

We first introduce a special value formula for $C_n$ to which we will refer frequently in the later sections of the paper. We assume from now on that $n$ is prime to $3$. Let $\psi_n$ be the Hecke character over $K$ associated to $C_n$. From an explicit description of $\psi_n$ contained in \cite{silverman}, we have
\begin{equation}\label{hecke-p}
  \psi_n((\alpha))=\ov{\left(\frac{n}{\alpha}\right)}_3 \alpha
\end{equation}
for every $\alpha\in\CO_K$ which is prime to $3n$ and is congruent to $1$ modulo $3$. Here, $\left(\frac{\cdot}{\cdot}\right)_3$ denotes the cubic residue symbol, and we denote by $(f)$ the conductor of $\psi_n$. A precise definition of the cubic residue symbol and a detailed description of $(f)$ can be found in \cite{stephens}.

Let $z$ and $s$ be complex variables. Given any lattice $\CL$ in the complex plane $\BC$, the Kronecker--Eisenstein series is defined by
\[H_1(z,s,\CL):=\sum_{w\in \CL}\frac{\ov{(z+w)}}{|z+w|^{2s}},\]
where the sum is taken over all $w\in \CL$ except $-z$ if $z\in \CL$. This series converges to define a holomorphic function in $s$ on the half plane $\Re(s)>\frac{3}{2}$, and it has analytic continuation to the whole complex $s$-plane. We define the non-holomorphic Eisenstein series by
\[\CE^*_1(z,\CL)=H_1(z,1,\CL).\]
It is shown in \cite{stephens} that the \Neron differential lattice of $C_n$ is $\Omega_n\CO_K$. We have the following special value formula for $C_n$, whose detailed proof can be found in \cite[Theorem 60]{coates1}.

\begin{thm}
Let $(f)$ be the conductor of $\psi_n$, and let $g$ be an integer in $K$ such that $g$ is a multiple of $f$. Then the value $\CE^*_1\left(\frac{\Omega_n}{g},\Omega_n\CO_K\right)$ lies in $K(g)$, and we have
\begin{equation}\label{coates-formula}
L^{(g)}(C_n,1)=\frac{\Omega_n}{g}{\rm Tr}_{K(g)/K}\left(\CE^*_1\left(\frac{\Omega_n}{g},\Omega_n\CO_K\right)\right),
\end{equation}
where $\mathrm{Tr}_{K(g)/K}$ denotes the trace map from $K(g)$ to $K$.
Here, we denote by $L^{(g)}(C_n,s)$ the imprimitive $L$-series of $C_n$ obtained by omitting the Euler factors at all primes dividing $(g)$, so that in particular in the case $(f)=(g)$, $L^{(g)}(C_n,s)$ is the complex $L$-series for $C_n$.
\end{thm}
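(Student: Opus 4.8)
The plan is to recognise \eqref{coates-formula} as the special-value formula for the Hecke $L$-function attached to $\psi_n$, and to convert a character-weighted sum over ideals into a Galois trace by means of the reciprocity law of complex multiplication. Since $C_n$ has CM by $\CO_K$, Deuring's theorem identifies $L^{(g)}(C_n,s)$ with the imprimitive Hecke series $\sum_{\mathfrak a}\overline{\psi_n(\mathfrak a)}\,N\mathfrak a^{-s}$, the sum running over integral ideals $\mathfrak a$ of $K$ prime to $(g)$; here I use that $L(C_n,s)=L(\overline{\psi_n},s)$, which holds because complex conjugation permutes the ideals of $K$. As $K=\BQ(\sqrt{-3})$ has class number one and its unit group $\mu_6$ maps isomorphically onto $(\CO_K/3)^{*}$, every ideal prime to $3$ has a unique generator $\alpha\equiv 1\bmod 3$, so by \eqref{hecke-p} the series becomes $\sum_{\alpha\equiv 1\bmod 3}\left(\tfrac{n}{\alpha}\right)_3\,\overline{\alpha}\,|\alpha|^{-2s}$, taken over such primary $\alpha$ prime to $g$.

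Next I would partition this sum according to the residue $\beta$ of $\alpha$ modulo $g$, where $\beta$ runs over the $[K(g):K]$ representatives of $(\CO_K/(g))^{*}/\mu_6$ with $\beta\equiv 1\bmod 3$. Because $3\mid f\mid g$, the symbol $\left(\tfrac{n}{\alpha}\right)_3$ depends only on $\alpha$ modulo $f$ and so equals the constant $\left(\tfrac{n}{\beta}\right)_3$ on each class. Using the homogeneity $H_1(\lambda z,s,\lambda\CL)=\overline{\lambda}\,|\lambda|^{-2s}H_1(z,s,\CL)$, the partial Dirichlet series $\sum_{\alpha\equiv\beta\bmod g}\overline{\alpha}\,|\alpha|^{-2s}$ is, after rescaling the coset $\beta+g\CO_K$ by $\Omega_n/g$, a value of $H_1(\tfrac{\beta\Omega_n}{g},s,\Omega_n\CO_K)$; continuing to $s=1$ it equals $\tfrac{\Omega_n}{g}\,\CE^*_1(\tfrac{\beta\Omega_n}{g},\Omega_n\CO_K)$, the period normalisation matching precisely because $\Omega_n\CO_K$ is the \Neron lattice of $C_n$. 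Summing over $\beta$ gives
\[
L^{(g)}(C_n,1)=\frac{\Omega_n}{g}\sum_{\beta}\left(\tfrac{n}{\beta}\right)_3\,\CE^*_1\!\left(\tfrac{\beta\Omega_n}{g},\Omega_n\CO_K\right).
\]

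It remains to show that this weighted sum is the trace $\mathrm{Tr}_{K(g)/K}\,\CE^*_1(\tfrac{\Omega_n}{g},\Omega_n\CO_K)$. For this I would invoke the reciprocity law of complex multiplication: identifying $\Gal(K(g)/K)\cong(\CO_K/(g))^{*}/\mu_6$ via the Artin map, the value $\CE^*_1(\tfrac{\Omega_n}{g},\Omega_n\CO_K)$ is a division value of the weight-one Eisenstein function on the CM curve $C_n=\BC/\Omega_n\CO_K$, whence it lies in $K(g)$, and its conjugate under $\sigma_\beta$ is $\left(\tfrac{n}{\beta}\right)_3\,\CE^*_1(\tfrac{\beta\Omega_n}{g},\Omega_n\CO_K)$ (up to the reindexing $\beta\mapsto\beta^{-1}$, under which the displayed sum is invariant). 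Summing over $\Gal(K(g)/K)$ then turns the character-weighted sum into the trace, which gives \eqref{coates-formula}; in the primitive case $(g)=(f)$ the omitted Euler factors are exactly those separating $L^{(g)}$ from the complex $L$-series of $C_n$.

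The step I expect to be the main obstacle is the reciprocity computation in the last paragraph: one must match the finite twist $\left(\tfrac{n}{\beta}\right)_3$ coming from \eqref{hecke-p} exactly with the Galois action on the torsion point $\tfrac{\Omega_n}{g}$, verifying that no spurious sixth root of unity survives and that the actions on the period and on the torsion argument combine correctly under the homogeneity of $\CE^*_1$. A secondary technical point, glossed above, is that the identity of coset sums holds only after analytic continuation to $s=1$, since the defining series for both $L^{(g)}(C_n,s)$ and $H_1(z,s,\CL)$ converge only for $\Re(s)>\tfrac{3}{2}$; this is handled by the standard Hecke--Eisenstein continuation of the Kronecker--Eisenstein series recalled before the statement.
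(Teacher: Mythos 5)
The paper does not prove this theorem itself; it is quoted with the proof deferred to \cite[Theorem 60]{coates1}. Your sketch is the standard argument given there --- Deuring's identification of $L^{(g)}(C_n,s)$ with the imprimitive Hecke series, the partition of primary generators into residue classes modulo $g$, the identification of each partial sum with a Kronecker--Eisenstein value via the homogeneity of $H_1$, and the CM reciprocity law converting the $\left(\frac{n}{\beta}\right)_3$-weighted sum into a Galois trace --- and it is correct, matching in particular the explicit Galois action $\CE^*_1\left(\frac{\Omega_n}{g},\Omega_n\CO_K\right)^{\sigma_{(\beta)}}=\CE^*_1\left(\frac{\psi_n((\beta))\Omega_n}{g},\Omega_n\CO_K\right)$ that the paper itself invokes in the proof of Proposition \ref{ste-f1}.
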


\medskip

Let us say from now on that the explicit modulo $3$ Birch--Swinnerton-Dyer conjecture holds for $C_n$ if the algebraic part of its $L$-value at $s=1$, when divided by the product of the Tamagawa factors, satisfies the modulo $3$ congruence predicted by the Birch--Swinnerton-Dyer conjecture. When $L(C_n,1)\neq 0$, the explicit modulo $3$ Birch--Swinnerton-Dyer conjecture is stronger than the $3$-part of the Birch--Swinnerton-Dyer conjecture for $C_n$, since the argument from Iwasawa theory due to Rubin \cite{rubin91} excludes both the $2$-part and the $3$-part of the Birch--Swinnerton-Dyer conjecture for $C_n$. In particular, it is not difficult to check that the congruences in Theorem \ref{main1} gives the explicit modulo $3$ Birch--Swinnerton-Dyer conjecture for $C_{2p}$ and $C_{2^2p^2}$ when $p\equiv 5\bmod 9$ and for $C_{2^2p}$ and $C_{2p^2}$ when $p\equiv 2\bmod 9$. Indeed, the factor $3$ in the denominators of the congruences in Theorem \ref{main1} comes from the Tamagawa factor at the prime $3$ (a description of the Tamagawa factor for $C_n$ is given in \cite{zk87}). Furthermore, by a $3$-descent argument, we can show that the $3$-part of the Tate--Shafarevich group of the elliptic curves in Theorem \ref{main1} is trivial (see Proposition \ref{3-descent}). It follows from \cite{rubin87} that these Tate--Shafarevich groups are finite, and thus by Cassels' theorem \cite{cassels62}, they have square orders. Hence their orders are congruent to $1$ modulo $3$, as required.

In this section, we first prove
\[L(C_p,1)\neq 0  \; \text{ and } \; L(C_{p^2},1)\neq0,\]
then show that the $3$-part of the Birch--Swinnerton-Dyer conjectures holds for these curves. Moreover, we will show that the explicit modulo $3$ Birch--Swinnerton-Dyer conjecture holds for $C_p$ and $C_{p^2}$. We remark that the non-vanishing of the $L$-values of these curves and the $3$-part of the Birch--Swinnerton-Dyer conjecture were shown by Zhang in \cite{szhang}. The main result of this section is the explicit modulo $3$ Birch--Swinnerton-Dyer conjecture for the curves $C_p$ and $C_{p^2}$, which we will prove by applying Zhao's induction argument and by establishing certain congruences between the algebraic $L$-values of $C_p$ and $C_{p^2}$.

\medskip

We write $D$ for $p$ or ${p^2}$. We recall that $A$ has the periods lattice $L=\Omega\CO_K$, and write $\wp(u)$ for the Weierstrass $\wp$-function associated to the lattice $L$. We will use the following adaptation of formula \eqref{coates-formula}. This formula was already obtained in \cite{stephens}, but we give a different proof, and the style of the proof will play a key role in our later arguments.

\begin{prop}\label{ste-f1}
We have
\[L(C_D,1)=\frac{-\Omega}{2\sqrt{3} p}\sum_{c\in \mathfrak{C}}\left(\frac{c}{D}\right)_3 \frac{1}{\wp\left(\frac{c\Omega}{p}\right)-1},\]
where $\mathfrak{C}$ denotes a set of representatives of $(\CO_K/p\CO_K)^\times$ such that $c\in \mathfrak{C}$ implies $-c\in \mathfrak{C}$.
\end{prop}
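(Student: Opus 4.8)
The plan is to start from the special value formula \eqref{coates-formula} and transport everything to the period lattice $L=\Omega\CO_K$ of $A$. First I would record that, since $p\equiv 2\bmod 3$, the prime $p$ is inert in $K$, so $p\CO_K$ is prime and $\CO_K/p\CO_K\cong\BF_{p^2}$; moreover the conductor of $\psi_D$ equals $(f)=(3p)$ (see \cite{stephens}), and this depends only on the radical of $D$, which is the same for $D=p$ and $D=p^2$. Taking $g=3p$, so that $(g)=(f)$ and $L^{(g)}(C_D,1)=L(C_D,1)$, I would then use the homogeneity $\CE^*_1(\mu z,\mu\CL)=\mu^{-1}\CE^*_1(z,\CL)$ of the Kronecker--Eisenstein series with $\mu=\Omega_D/\Omega=1/(\sqrt{3}\,D^{1/3})$. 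This turns every value $\CE^*_1(\tfrac{\Omega_D}{g},\Omega_D\CO_K)$ into $\sqrt{3}\,D^{1/3}\,\CE^*_1(\tfrac{\Omega}{g},\Omega\CO_K)$, and the prefactor $\tfrac{\Omega_D}{g}\sqrt{3}\,D^{1/3}$ collapses to $\tfrac{\Omega}{g}$, giving
\[L(C_D,1)=\frac{\Omega}{3p}\,\mathrm{Tr}_{K(3p)/K}\,\CE^*_1\!\left(\frac{\Omega}{3p},\Omega\CO_K\right).\]

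Next I would unfold the trace by the reciprocity law for CM values of Kronecker--Eisenstein series (the main theorem of complex multiplication, in the form used in \cite{coates1}). Identifying $\Gal(K(3p)/K)$ with $(\CO_K/3p)^\times/\CO_K^\times$, the conjugate of the special value under $\sigma_\alpha$ is $\CE^*_1(\tfrac{\alpha\Omega}{3p},\Omega\CO_K)$ weighted by the finite part of $\psi_D(\alpha)$, which by cubic reciprocity is the cubic residue symbol attached to the class $\alpha\bmod p$. Thus the trace becomes a weighted sum over $\alpha\in(\CO_K/3p)^\times/\CO_K^\times$. By the Chinese Remainder Theorem I would split $(\CO_K/3p)^\times\cong(\CO_K/3)^\times\times(\CO_K/p)^\times$ and write $\tfrac{\alpha\Omega}{3p}\equiv\tfrac{t\Omega}{3}+\tfrac{c\Omega}{p}$ modulo $\Omega\CO_K$, where $\tfrac{c\Omega}{p}$ runs over the nonzero $p$-torsion of $A$ and $\tfrac{t\Omega}{3}$ over the six $3$-torsion points indexed by $t\in(\CO_K/3)^\times$.

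The crux is the summation over the $3$-part. Working out the $3$-torsion of $A\colon(\wp')^2=4\wp^3-1$, the six points $\tfrac{t\Omega}{3}$ with $t\in(\CO_K/3)^\times$ fall into three $\pm$-pairs $\{\pm u_3\},\{\pm\zeta_3u_3\},\{\pm\zeta_3^2u_3\}$, with $\wp$-values $1,\zeta_3,\zeta_3^2$ and common derivative $\wp'=\sqrt{3}$ (the two remaining nonzero $3$-torsion points are the $\lambda$-torsion points, where $\wp=0$). Since $\CE^*_1(z,L)$ differs from the Weierstrass $\zeta$-function by an $\BR$-linear term in $z$, that correction cancels in the antisymmetric combination, so
\[\CE^*_1(z+a)-\CE^*_1(z-a)-2\CE^*_1(a)=\frac{-\wp'(a)}{\wp(z)-\wp(a)}.\]
Applying this with $z=\tfrac{c\Omega}{p}$ and $a$ in each pair produces $\tfrac{-\sqrt{3}}{\wp(z)-1}$, $\tfrac{-\sqrt{3}}{\wp(z)-\zeta_3}$, $\tfrac{-\sqrt{3}}{\wp(z)-\zeta_3^2}$; re-indexing $c\mapsto\zeta_3 c$, under which $\wp(\tfrac{c\Omega}{p})\mapsto\zeta_3\wp(\tfrac{c\Omega}{p})$ while the symbol $(\tfrac cD)_3$ picks up $(\tfrac{\zeta_3}{D})_3$, converts the $\zeta_3$- and $\zeta_3^2$-terms into further copies of the $\tfrac{1}{\wp(c\Omega/p)-1}$ term. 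Collecting the cubic weights and the constants $\Omega/3p$, $\sqrt{3}$, and the factor $1/2$ arising from the $\pm$-pairing yields the stated formula.

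The hard part will be the bookkeeping in these last two steps: one must pin down the exact equivariance of $\CE^*_1$ under the Artin action (the precise power of the cubic symbol and the role of the units $\CO_K^\times=\mu_6$), check that the cubic weights at the prime $3$ assemble exactly into the antisymmetric combination on the left of the $\zeta$-addition formula, and track every constant --- including the $3$ converting $3p$-torsion to $p$-torsion and the overall sign --- through the re-indexing to land on $\tfrac{-\Omega}{2\sqrt{3}\,p}$. By contrast, the distinction $D=p$ versus $D=p^2$ enters only through whether $(\tfrac cD)_3$ equals $(\tfrac cp)_3$ or $(\tfrac cp)_3^2$, and does not affect the structure of the argument.
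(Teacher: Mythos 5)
Your overall strategy --- Coates's special value formula, homogeneity of $\CE^*_1$, unfolding the trace via the Artin action, and an addition formula to extract $1/(\wp(\tfrac{c\Omega}{p})-1)$ --- matches the paper's, but the combinatorial heart of your argument is exactly the step you defer to ``bookkeeping,'' and as stated it does not go through. The trace is a sum with \emph{one} term per element of $\Gal(K(3p)/K)\simeq(\CO_K/3p\CO_K)^\times/\mu_6$, so letting $t$ range over all six elements of $(\CO_K/3\CO_K)^\times$ while $c$ ranges over all of $(\CO_K/p\CO_K)^\times$ overcounts the Galois group sixfold. To legitimize the sixfold $t$-sum you must average each class over its six unit representatives $\epsilon\alpha$, and by the degree $-1$ homogeneity $\CE^*_1(\ov{\epsilon}z,L)=\epsilon\,\CE^*_1(z,L)$ this inserts a factor of $\epsilon$ (essentially the $3$-part $t$) into the weight. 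This factor is not a nuisance but the whole point: the cubic residue symbol itself satisfies $\left(\frac{-\alpha}{D}\right)_3=\left(\frac{\alpha}{D}\right)_3$ (as $-1$ is a cube), so without the extra unit the weight attached to $(t,c)$ and $(-t,c)$ is the \emph{same}, the combination $\CE^*_1(z+a)-\CE^*_1(z-a)-2\CE^*_1(a)$ never forms, and your key identity is inapplicable --- you would instead be pairing into $\CE^*_1(z+a)+\CE^*_1(z-a)$, which produces $\wp'(z)$ rather than $\wp'(a)$ in the numerator. So the ``precise power of the cubic symbol and the role of the units'' is not bookkeeping; it is the single ingredient that makes or breaks the proof, and it is missing.

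For comparison, the paper sidesteps the $t$-sum entirely: it chooses the representatives $3c-p$, all congruent to $1$ modulo $3$, so that every Galois conjugate sits at the point $\tfrac{c\Omega}{-p}+\tfrac{\Omega}{3}$ with the $3$-part \emph{fixed}, and the weight $\left(\frac{c}{D}\right)_3$ pops out of $\psi_D((3c-p))=\ov{\left(\frac{c}{D}\right)_3}(3c-p)$ via homogeneity. It then writes $\CE^*_1=\zeta-zs_2(L)-\ov{z}A(L)^{-1}$, applies the ordinary addition formula for $\zeta$ at $a=\Omega/3$ with $\wp(\Omega/3)=1$, $\wp'(\Omega/3)=-\sqrt{3}$, and kills everything except the $\tfrac12\cdot\tfrac{\sqrt3}{\wp-1}$ term by using the symmetric set $\mathfrak{C}$ ($c\in\mathfrak{C}\Rightarrow-c\in\mathfrak{C}$), the oddness of $\zeta(z,L)$ and $\wp'(z)$, and the vanishing of $\sum_{c\in\mathfrak{C}}\left(\frac{c}{D}\right)_3$. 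If you want to rescue your version, you must prove that the averaged weight is $t\cdot(\text{symbol in }c)$, hence odd in $t$; alternatively, adopt the paper's normalization and replace your $t$-pairing by the $c\leftrightarrow -c$ pairing, which also renders your final re-indexing $c\mapsto\omega c$ unnecessary for this proposition (it is only needed later, in the proof of Theorem \ref{zhang1}).
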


\begin{proof}
From \cite{stephens}, we know that the conductor of $\psi_D$ divides $(3p)$ and has the same prime ideal divisors as $(3p)$. By formula \eqref{coates-formula}, we have
\begin{equation}\label{2-1}
L(C_D,1)=\frac{\Omega_D}{-3p}\mathrm{Tr}_{K(3p)/K}\left(\CE^*_1\left(\frac{\Omega_D}{-3p},\Omega_D\CO_K\right)\right).
\end{equation}

We make a choice for a set of representatives of integral ideals in $K$ whose Artin symbols give precisely the Galois group $\Gal(K(3p)/K)$. Note that
\[\Gal(K(3p)/K)\simeq \left(\CO_K/3p\CO_K\right)^\times/\mu_6\simeq (\CO_K/p\CO_K)^\times,\]
where the inverse of the second isomorphism is given by sending $c\in (\CO_K/p\CO_K)^\times$ to $3c-p$. Here, we use the condition $-p\equiv 1\mod 3$ and identify $\mu_6$ with $(\CO_K/3\CO_K)^\times$.
Therefore, the Artin symbols of $(3c-p)$ in $\Gal(K(3p)/K)$ give a set of representatives of $\Gal(K(3p)/K)$ as $c$ runs over all the elements of $(\CO_K/p\CO_K)^\times$. Thus from \eqref{2-1} and the explicit Galois action on $\CE^*_1\left(\frac{\Omega_D}{-3p},\Omega_D\CO_K\right)$ described in \cite{coates1}, we obtain
\[\begin{aligned}L(C_D,1)=&\frac{\Omega_D}{-3p} \sum_{c\in (\CO_K/p\CO_K)^\times}\CE^*_1\left(\frac{\psi_D((3c-p))\Omega_D}{-3p},\Omega_D\CO_K\right)\\
=& \frac{\Omega}{-3p} \sum_{c\in (\CO_K/p\CO_K)^\times}\CE^*_1\left(\frac{\psi_D((3c-p))\Omega}{-3p},L\right)
\end{aligned}\]
where $L=\Omega \CO_K$. For the second equality, we use the homogeneity of Eisenstein series
\[\CE^*_1(\lambda z,\lambda L)=\lambda^{-1}\CE^*_1(z, L)\qquad \lambda\in \BC^\times\]
of degree $-1$. Furthermore, by \eqref{hecke-p}, we have
\[\psi_D((3c-p))=\ov{\left(\frac{D}{3c-p}\right)}_3 (3c-p)=\ov{\left(\frac{c}{D}\right)_3}(3c-p).\]
In the second equality, we use the cubic reciprocity law and the fact that $\left(\frac{3}{D}\right)_3=1$ (since $3,D\in\BQ$).
It follows that
\[L(C_D,1)=\frac{\Omega}{-3p} \sum_{c\in (\CO_K/p\CO_K)^\times} \left(\frac{c}{D}\right)_3\CE^*_1\left(\frac{c\Omega}{-p}+\frac{\Omega}{3},L\right).\]

We write $L=\BZ u+\BZ v$ with $\Im(v/u)>0$. We define the positive real number $A(L)$ by $A(L)=\frac{\ov{u}v-u\ov{v}}{2\pi i}$.
Furthermore, we write
\[s_2(L)=\lim_{s>0, s\ra 0}\sum_{w\in L\bs\{0\}} w^{-2}|w|^{-2s}.\]
Let $\zeta(z,L)$ be the Weierstrass zeta function attached to $L$. Then by \cite[Theorem 55]{coates1}, we have
\[\CE^*_1(z,L)=\zeta(z,L)-z s_2(L)-\ov{z} A(L)^{-1}.\]

Note that by \cite[pp. 390--392]{qiu}, we have $A(L)=\sqrt{3}\Omega^2/(2\pi)$, $s_2(L)=\frac{2}{\Omega}\zeta\left(\Omega/2,L\right)-\frac{2\pi}{\sqrt{3}\Omega^2}$ and $\zeta(\Omega/3,L)=\frac{2\pi}{3\sqrt{3}\Omega}+\frac{\sqrt{3}}{3}$. Hence the additive formula of the Weierstrass zeta function gives
\[\zeta\left(\frac{c\Omega}{-p}+\frac{\Omega}{3},L\right)=\zeta\left(\frac{c\Omega}{-p},L\right)+\frac{2\pi}{3\sqrt{3}\Omega}+\frac{1}{\sqrt{3}}+
\frac{1}{2}\left(\frac{\wp'(c\Omega/(-p))-\wp'(\Omega/3)}{\wp(c\Omega/(-p))-\wp(\Omega/3)}\right).\]
From \cite{stephens}, we know that
\[\wp(\Omega/3)=1,\quad \wp'(\Omega/3)=-\sqrt{3},\]
and we also have $\zeta(\Omega/2,L)=\pi/(\sqrt{3}\Omega)$ (see \cite[p. 391]{qiu}).
Thus from all of the above, we obtain
\begin{small}
\begin{align*}L(C_D,1)=\frac{\Omega}{-3p}\sum_{c}\left(\frac{c}{D}\right)_3\left(\zeta\left(\frac{c\Omega}{-p},L\right)+\frac{1}{2}\cdot \frac{\wp'(c\Omega/(-p))+\sqrt{3}}{\wp(c\Omega/(-p))-1}+\frac{1}{\sqrt{3}}+\frac{2\pi\ov{c}}{\sqrt{3}\Omega p}\right),
\end{align*}
\end{small}
where $c$ runs over the elements of $(\CO_K/p\CO_K)^\times$. Now, since $p$ is an odd prime, we can choose a set $\mathfrak{C}$ of representatives of $(\CO_K/p\CO_K)^\times$ such that $c\in \mathfrak{C}$ implies $-c\in \mathfrak{C}$. Note that
\[\sum_{c\in \mathfrak{C}}\left(\frac{c}{p}\right)_3=0,\]
and that $\zeta(z,L)$ and $\wp'(z)$ are odd functions. Noting also that $\left(\frac{-1}{D}\right)_3=1$, we obtain
\[L(C_D,1)=-\frac{\Omega}{2\sqrt{3}p}\sum_{c\in \mathfrak{C}}\left(\frac{c}{D}\right)_3 \frac{1}{\wp\left(\frac{c\Omega}{p}\right)-1}\]
as required.
\end{proof}

The curve $A$ can also be written in the form
\[y^2=x^3-16.\]

Now, we will construct a model $\CA$ of the curve $A$ over a certain finite abelian extension of $K$ so that $\CA$ has good reduction at $3$. The main theoretical reason is the following theorem. The proof of the following theorem can be found in \cite[Theorem 2]{CW} or \cite[Theorem 2.4]{CL}.

\begin{thm}\label{cw22}
Let $E$ be an elliptic curve over $K$ with complex multiplication by $\CO_K$. Let $\fp=(\alpha)$ be any prime ideal of $K$ at which $E$ has good reduction. Denote by $E_\fp$ the subgroup of points in $E(\ov{K})$ which are contained in the kernel of the endomorphism $\alpha$. Then $E$ has good reduction everywhere over $K(E_\fp)$.
\end{thm}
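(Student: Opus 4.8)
My plan is to deduce the statement from the criterion of Néron--Ogg--Shafarevich, using the theory of complex multiplication to control the action of inertia on torsion. First I would reduce to a statement about inertia groups. Fix a prime $\fQ$ of $L:=K(E_\fp)$ lying over a rational prime $\ell_0$, and choose any rational prime $\ell\neq\ell_0$. By the criterion of Néron--Ogg--Shafarevich it is enough to show that the inertia subgroup $I_\fQ\subseteq\Gal(\ov{K}/L)$ acts trivially on the Tate module $T_\ell E=\varprojlim_n E[\ell^n]$; since $\fQ$ is arbitrary this will give good reduction everywhere over $L$.

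Next I would extract the key rigidity from complex multiplication. Because $\End_{\ov{K}}(E)=\CO_K$, the Galois action on every torsion module is $\CO_K$-linear, and $T_\ell E$ is free of rank one over $\CO_K\otimes_\BZ\BZ_\ell$. As $E$ has complex multiplication it has potential good reduction at every prime (Serre--Tate), so the image of $I_\fQ$ in $\Aut(T_\ell E)$ is finite. I would then argue that this finite image consists of the multiplication-by-$\zeta$ maps for $\zeta$ in a subgroup of the roots of unity $\CO_K^\times=\mu_6$, and---this is the crucial compatibility---that a single element $\zeta=\zeta(\sigma)$ governs the action of each $\sigma\in I_\fQ$ on \emph{all} torsion simultaneously, in particular on $E_\fp=E[\alpha]\cong\CO_K/\fp$. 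Concretely one realizes the whole family of representations through the Hecke character $\psi_E$ of $E$: an inertia element $\sigma$ acts on each $T_\ell E$ and on each $E[\fm]$ by multiplication by the common value $\psi_{E,\fQ}(\sigma)\in\mu_6$ of the local component of $\psi_E$ at $\fQ$ (via local reciprocity).

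I would then finish by a congruence argument. By definition of $L=K(E_\fp)$, the group $\Gal(\ov{K}/L)$ fixes $E_\fp=E[\alpha]$ pointwise, so for $\sigma\in I_\fQ$ the root of unity $\zeta=\psi_{E,\fQ}(\sigma)$ acts trivially on $\CO_K/\fp$, i.e. $\zeta\equiv 1\pmod{\fp}$. Since $\fp$ is a prime of good reduction we have $\fp\nmid 6$, and then the reduction map $\mu_6\to(\CO_K/\fp)^\times$ is injective: indeed $\prod_{\zeta\in\mu_6\setminus\{1\}}(\zeta-1)=\pm 6$, so no nontrivial sixth root of unity is congruent to $1$ modulo $\fp$. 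Hence $\zeta=1$, so $\sigma$ acts trivially on $T_\ell E$. As $\sigma\in I_\fQ$ was arbitrary, $I_\fQ$ acts trivially on $T_\ell E$, and Néron--Ogg--Shafarevich yields good reduction of $E$ at $\fQ$.

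The two ends of the argument (invoking Néron--Ogg--Shafarevich and the final congruence) are routine; the substantive step is the middle one, namely identifying the inertia action as multiplication by a \emph{single} root of unity acting compatibly across all torsion modules. This is exactly the content of the theory of complex multiplication---the abelian Galois representation attached to $\psi_E$ together with potential good reduction---and is where I expect the real work to lie. The genuinely delicate point is the behaviour at primes dividing $6$, where the geometric automorphism group of the reduction is larger than $\mu_6$ and wild ramification intervenes; this is precisely why the clean statement needs $\fp\nmid 6$ (equivalently $N\fp>3$), a condition satisfied by every prime $\fp$ to which the theorem is applied here.
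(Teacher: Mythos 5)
Your argument is correct and is essentially the proof of the result the paper is citing: the paper itself gives no proof of Theorem~\ref{cw22}, referring instead to \cite[Theorem 2]{CW} and \cite[Theorem 2.4]{CL}, and the argument there is exactly yours --- N\'eron--Ogg--Shafarevich, the fact that inertia acts on all prime-to-$v$ torsion through a single root of unity $\zeta\in\CO_K^\times=\mu_6$ supplied by the Hecke character, and the congruence $\zeta\equiv 1\bmod\fp$ forcing $\zeta=1$. Your closing caveat is also well taken: the injectivity of $\mu_6\to(\CO_K/\fp)^\times$ genuinely requires $\fp\nmid 6$, a hypothesis omitted from the statement as reproduced here but present in the sources and satisfied by the only prime to which the theorem is applied, namely $\fp=(2-\sqrt{-3})$ of norm $7$.
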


Thus we can construct a model of $A$ over the field $\CK=K(A_{(2-\sqrt{-3})})$, where $(2-\sqrt{-3})$ is a prime ideal of $K$ lying above the prime ideal $7\BZ$ of $\BQ$.

\begin{lem} Let $\CK$ be given as above. Then we have
\[\CK=K\left(\sqrt[6]{\frac{6\sqrt{-3}}{1+3\sqrt{-3}}}\right).\]
\end{lem}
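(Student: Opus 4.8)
The plan is to compute $\CK=K(A_{(\alpha)})$ with $\alpha:=2-\sqrt{-3}$ by exhibiting explicit coordinates for a generator of the $\alpha$-torsion, and then to recognise the field they generate as a sextic Kummer extension, which is legitimate since $\mu_6\subset K$. Because $N(\alpha)=7$ and we work in characteristic $0$, the isogeny $[\alpha]$ is separable of degree $7$, so $A_{(\alpha)}(\ov{\BQ})\cong \CO_K/\alpha\cong\BF_7$ is cyclic, generated by a single point $P=(x_P,y_P)$. As $A$ and the maps $[n]$ and $[\omega]$ are defined over $K$, every point of $A_{(\alpha)}$ has coordinates in $K(x_P,y_P)$, so $\CK=K(x_P,y_P)$, and it suffices to determine $x_P^3$ and $y_P^2$.

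For the CM action, the idea is to use the uniformisation $x=\wp(z)$, $y=\wp'(z)$ of $A\colon y^2=4x^3-1$ (so $g_2=0$, $g_3=1$), under which $[\lambda]$ is $z\mapsto\lambda z$. Writing $\omega=\tfrac{-1+\sqrt{-3}}{2}$, homogeneity gives $[\omega](x,y)=(\omega x,y)$, and since $\sqrt{-3}=\omega-\omega^2$ the addition formula for $\wp$ (applied to $[\omega]P$ and $-[\omega^2]P$) yields
\[
x\bigl([\sqrt{-3}]P\bigr)=\frac{1-x^3}{3x^2}.
\]
Now $\sqrt{-3}\equiv 2\pmod{\alpha}$, so a point lies in $A_{(\alpha)}$ exactly when $[\sqrt{-3}]P=[2]P$. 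Combining the duplication formula $x([2]P)=\frac{x(x^3+2)}{4x^3-1}$ with the display above and clearing denominators collapses to
\[
7x^6+x^3+1=0 .
\]
Setting $t=x^3$ and solving $7t^2+t+1=0$ gives $t=\tfrac{-1\pm 3\sqrt{-3}}{14}$; the two roots are the kernel values of $\alpha$ and of its conjugate $\bar\alpha=2+\sqrt{-3}$, which generates the other prime above $7$. Pinning the sign to $\alpha$ — by matching the sign of $\wp'$ under the condition $[\sqrt{-3}]P=[2]P$ rather than $[\sqrt{-3}]P=[-2]P$ — singles out $x_P^3=\tfrac{-1+3\sqrt{-3}}{14}$, and then $y_P^2=4x_P^3-1=\tfrac{6\sqrt{-3}-9}{7}$.

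It remains to identify the Kummer data. With $\gamma:=\tfrac{6\sqrt{-3}}{1+3\sqrt{-3}}$ one computes
\[
\frac{\gamma}{x_P^3}=-3\sqrt{-3}=(\sqrt{-3})^3\in(K^\times)^3,
\qquad
\frac{\gamma}{y_P^2}=\frac{-1-\sqrt{-3}}{2}=\omega^2\in(K^\times)^2 .
\]
Hence $K(x_P)=K(\sqrt[3]{\gamma})$ and $K(y_P)=K(\sqrt{\gamma})$; since $\gcd(2,3)=1$ and $\mu_6\subset K$ we have $\sqrt[6]{\gamma}=\sqrt{\gamma}/\sqrt[3]{\gamma}$, so $\CK=K(x_P,y_P)=K(\sqrt[3]{\gamma},\sqrt{\gamma})=K(\sqrt[6]{\gamma})$, as claimed. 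A quick norm check — $N(x_P^3)=1/7$ is not a rational cube and $N(y_P^2)=27/7$ is not a rational square — confirms in passing that $[\CK:K]=6$.

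The main obstacle I anticipate is bookkeeping rather than conceptual: getting the CM normalisation and the two explicit endomorphism formulas exactly right (the signs, and the factor $\tfrac14$ forced by the model $y^2=4x^3-1$), together with correctly selecting the cubic factor attached to $\alpha$ rather than to $\bar\alpha$, so that the resulting $\gamma$ is the displayed one and not its complex conjugate. Once the sign is fixed, the remaining identifications are direct verifications inside $K$.
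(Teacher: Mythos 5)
Your computation is correct and follows the same route the paper indicates — Kummer theory over $K\supset\mu_6$ plus a direct calculation with the addition law and the CM action — except that the paper leaves the entire calculation to the reader, whereas you carry it out (on the model $y^2=4x^3-1$ rather than $y^2=x^3-16$, an immaterial difference), and all your displayed identities check: $7x^6+x^3+1=0$, $\gamma/x_P^3=(\sqrt{-3})^3$, $\gamma/y_P^2=\omega^2$, $N(x_P^3)=1/7$, $N(y_P^2)=27/7$. The one step you assert without executing — that the root $t=\tfrac{-1+3\sqrt{-3}}{14}$ belongs to $\alpha=2-\sqrt{-3}$ rather than to $\bar\alpha$ — can be settled more cheaply than by comparing $\wp'$-values: $K(A_{(\alpha)})/K$ is totally ramified at $(\alpha)$ and unramified at $(\bar\alpha)$, and indeed $v_{(\alpha)}(\gamma)=-1$ while $\gamma$ is a unit at $(\bar\alpha)$, which forces your choice of sign.
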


\begin{proof}
Since $A$ has good reduction at $7$, its formal group at $7$ is a Lubin--Tate formal group. Therefore, the degree of $\CK$ over $K$ is equal to $6$, and $\Gal(\CK/K)$ is a cyclic Galois group. Since $\mu_6$ is contained in $K$, by Kummer theory, we just need to find an element $\alpha$ in $K$ such that $\sqrt[6]{\alpha}$ generates $\CK$. One can do a direct calculation using the additive law and complex multiplication on the curve $y^2=x^3-16$ to get such an $\alpha$, and we leave the details to the reader.
\end{proof}

Now, we make the change of variables $y=u^3Y , \;x=u^2X+r$ to the curve $y^2=x^3-16$, where
\begin{equation}\label{r} u=\sqrt[6]{\frac{6\sqrt{-3}}{1+3\sqrt{-3}}}\quad \text{ and }\quad  r=4 \frac{u^2}{\sqrt{-3}}.
\end{equation}

Then we obtain a model
\begin{equation}\label{huaa}
  \CA\colon Y^2=X^3-4\sqrt{-3}X^2-16X-8+8\sqrt{-3}
\end{equation}
of $A$. Note that since $\ord_3(u)=\frac{1}{4}$, $\CA$ has good reduction at $3$.

\begin{lem}\label{val-pt}
If $Q$ is a point on $\CA$ of order prime to $3$, then either $X(Q)=0$ or $\ord_3\left(X(Q)\right)=0$.
\end{lem}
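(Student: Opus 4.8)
The plan is to exploit the good reduction of $\CA$ at the prime $\fP$ of $\CK$ above $3$ and to study the reduction map on prime-to-$3$ torsion. Write $R$ for the ring of integers of the completion $\CK_\fP$, with maximal ideal $\fm$ and residue characteristic $3$, and let $\widetilde{\CA}$ denote the reduction of $\CA$. Since $Q$ has order prime to the residue characteristic, two standard facts about good reduction apply. First, $Q$ (which we may assume is not the identity $O$) extends to an $R$-point whose reduction $\widetilde{Q}$ is a nonzero affine point, so $X(Q)\in R$ and hence $\ord_3(X(Q))\geq 0$. Second, the reduction map is injective on the prime-to-$3$ torsion, since its kernel is the formal group, a pro-$3$ group. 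With integrality in hand, the asserted dichotomy is exactly the claim that $\ord_3(X(Q))>0$ forces $X(Q)=0$.

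First I would reduce the equation \eqref{huaa} modulo $\fm$. As $\ord_3(\sqrt{-3})=\tfrac12>0$, the scalar $\sqrt{-3}$ reduces to $0$, and the remaining integer coefficients reduce into $\BF_3$ (with $-16\equiv -1$ and $-8\equiv 1\bmod 3$), giving $\widetilde{\CA}\colon Y^2=X^3-X+1$. The only points of $\widetilde{\CA}$ with $X$-coordinate $0$ are $(0,\pm 1)$, since there $Y^2=1$. Because $Q\neq O$ forces $\widetilde{Q}$ to be a finite point, the condition $\ord_3(X(Q))>0$ is equivalent to $\widetilde{Q}\in\{(0,1),(0,-1)\}$, so the problem is reduced to identifying the prime-to-$3$ torsion points lying above these two points of the special fibre.

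The key step is to produce such points explicitly and check they are prime-to-$3$ torsion. The points of $\CA$ with $X=0$ are $P_\pm=(0,\pm Y_0)$, where $Y_0^2=-8+8\sqrt{-3}$; since $\sqrt{-3}-1$ is a unit at $\fP$ one has $\ord_3(Y_0)=0$ and $\widetilde{Y_0}=\pm 1$, so $P_\pm$ reduce precisely to $(0,\pm 1)$. I claim $P_+$ and $P_-$ are the two nontrivial points killed by the endomorphism $2-\sqrt{-3}$. This is built into the construction of $\CA$, since $\CK=K(A_{(2-\sqrt{-3})})$ and the change of variables \eqref{r} places $X=0$ at $x=r=4u^2/\sqrt{-3}$. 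One verifies the claim by the same kind of direct computation with the addition law and complex multiplication on $y^2=x^3-16$ used to determine $\CK=K(u)$: namely, that $r$ is a root of the cubic whose roots are the $x$-coordinates of the nonzero $(2-\sqrt{-3})$-division points, consistent with $K(r)=K(u^2)$ being the degree-$3$ extension of $K$ generated by such an $x$-coordinate. In particular $P_\pm$ have order $7$, which is prime to $3$.

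With $P_\pm$ identified as prime-to-$3$ torsion points reducing to $(0,\pm 1)$, injectivity of reduction on prime-to-$3$ torsion concludes the argument: if $\ord_3(X(Q))>0$, then $\widetilde{Q}=\widetilde{P_+}$ or $\widetilde{Q}=\widetilde{P_-}$, whence $Q=P_+$ or $Q=P_-$, and so $X(Q)=0$. Combined with $\ord_3(X(Q))\geq 0$ this yields the stated dichotomy. I expect the main obstacle to be the key step, i.e.\ the clean verification that the $X=0$ points are $(2-\sqrt{-3})$-torsion (equivalently that $r$ is the $x$-coordinate of a $(2-\sqrt{-3})$-division point); once good reduction and injective reduction on prime-to-$3$ torsion are in place, the rest is formal.
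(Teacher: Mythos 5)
Your proof is correct and follows essentially the same route as the paper's: good reduction of $\CA$ at $3$, identification of the $X=0$ points as $(2\pm\sqrt{-3})$-torsion (hence of order $7$, prime to $3$), and injectivity of reduction modulo $3$ on prime-to-$3$ torsion. (One minor slip that does not affect the argument: the kernel of $2-\sqrt{-3}$ has seven elements, so $P_{\pm}$ are two of its six nontrivial points rather than the only two.)
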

\begin{proof}
The following proof is essentially an analogue of the same result in \cite{razar1}. One can also find a proof in \cite{szhang}, but we give it here for the convenience of the reader. Denote by $\tilde{\CA}$ the reduced curve of $\CA$ modulo $3$, so that
\[\tilde{\CA}\colon \widetilde{Y}^2=\widetilde{X}^3-\widetilde{X}+1.\]
Since $\CA$ has good reduction at $3$, the torsion points of order prime to $3$ on $\CA$ inject into $\tilde{\CA}$, and if $Q_1\neq\pm Q_2$ on $\CA$, we have
\begin{equation}\label{red}
  \widetilde{X(Q_1)-X(Q_2)}=\widetilde{X(Q_1)}-\widetilde{X(Q_2)}\neq 0.
\end{equation}
Let $Q\neq 0$ be a point on $\CA$ satisfying $X(Q)=0$. Then a direct computation shows $X(\sqrt{-3}Q)=X(2Q)$, so that $Q$ is a $(2-\sqrt{-3})$- or a $(2+\sqrt{-3})$-torsion point. Let $Q_1$ be any point with $X(Q_1)\neq 0$. Then clearly $Q_1\neq \pm Q$ since $X(Q)=X(-Q)$. Thus setting $Q_2=Q$ in \eqref{red}, we see that if $X(Q_1)\neq 0$ then $\widetilde{X(Q_1)}\neq 0$, that is, $\ord_3(X(Q_1))=0$.
\end{proof}

From now on, we write $(\wp(z),\wp'(z))$ for a point on $A\colon y^2=4x^3-1$, $(x,y)$ for a point on the model $y^2=x^3-16$ of $A$, and $(X,Y)$ for a point the model $\CA$ whose equation is given in \eqref{huaa}. The relation between the first two is given by
\begin{equation}\label{x-wp}
  x=4\wp(z)\qquad y=4\wp'(z).
\end{equation}

Let $Q=\left(\wp\left(\frac{\Omega}{\Delta}\right), \wp'\left(\frac{\Omega}{\Delta}\right)\right)$ be a point on $A$, where $\Delta$ is an integer prime to $3$. If $c\in\CO_K$ is prime to $\Delta$, then since $x=u^2X+r$ and $\ord_3(u)=1/4$, Lemma \ref{val-pt} gives
\begin{equation}\label{f200}
  4\wp\left(\frac{c\Omega}{\Delta}\right)=x([c]Q)\equiv r\mod 3^{\frac{1}{2}}.
\end{equation}
Here, we denote by $[c]$ the complex multiplication by $c\in\CO_K$.
Note that $r$ is a $3$-adic unit which is independent of the point $[c]Q$, and thus $\wp\left(\frac{c\Omega}{\Delta}\right)$ is a $3$-adic unit.

The following lemma will be repeatedly used in the remainder of this section and in Section \ref{section3}.

\begin{lem}\label{key-val}
Let $\Delta$ be an integer prime to $3$. Then we have
\begin{enumerate}
         \item $\wp\left(\frac{c\Omega}{\Delta}\right)-1\equiv 3^{\frac{1}{3}}u_1\mod 3^{\frac{1}{2}}$, and
         \item $\wp\left(\frac{c\Omega}{\Delta}\right)^3-1\equiv 3u^3_1\mod 3^{\frac{7}{6}}$
 \end{enumerate}
for a $\:3$-adic unit $u_1$ which is independent of the point $[c]Q$. Moreover, we have $u^3_1=(r-1)^3/3$, where $r$ is given in \eqref{r}. We remark that ``$\mod 3^\varepsilon$" (for $\varepsilon>0$) is taken in the ring of integers $\ov{\BZ}_3$ of $\ov{\BQ}_3$.
\end{lem}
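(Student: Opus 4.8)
The plan is to reduce both congruences to a single purely local statement about the constant $r$ of the change of variables \eqref{r}, namely that $\ord_3(r-1)=\tfrac13$, and then to read off (1) and (2) by elementary $3$-adic manipulations.

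First I would remove the dependence on the point $[c]Q$. Dividing \eqref{f200} by the $3$-adic unit $4$ gives $\wp(c\Omega/\Delta)\equiv r/4 \mod 3^{1/2}$, so that
\[
\wp\!\left(\tfrac{c\Omega}{\Delta}\right)-1 \equiv \tfrac{r-4}{4} \mod 3^{1/2},
\]
the discarded term $u^2X([c]Q)/4$ having valuation $\ge\tfrac12$ by Lemma \ref{val-pt} and $\ord_3(u)=\tfrac14$. Since $\tfrac{r-4}{4}-(r-1)=-\tfrac{3r}{4}$ has valuation $1$, this already shows $\wp(c\Omega/\Delta)-1\equiv r-1\mod 3^{1/2}$, with $r-1$ independent of $[c]Q$. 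Thus (1) will follow once I prove $\ord_3(r-1)=\tfrac13$: I then simply set $3^{1/3}u_1:=r-1$, which forces $u_1^3=(r-1)^3/3$ and $\ord_3(u_1)=0$, so that $u_1$ is a $3$-adic unit independent of $[c]Q$.

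The heart of the matter, and the step I expect to be the main obstacle, is the valuation $\ord_3(r-1)=\tfrac13$. From \eqref{r} I would compute $r^3=4^3u^6/(\sqrt{-3})^3=-128/(1+3\sqrt{-3})$, whence $r^3-1=-3(43+\sqrt{-3})/(1+3\sqrt{-3})$ has $\ord_3=1$. Factoring $r^3-1=(r-1)(r-\omega)(r-\omega^2)$ over $\ov{\BZ}_3$, where $\omega=\tfrac{-1+\sqrt{-3}}{2}$, I note that in the residue field (of characteristic $3$) each of $1,\omega,\omega^2$ reduces to $1$, since $x^3-1=(x-1)^3$ there; combined with $\ov r^{\,3}\equiv 1\bmod\fm$ this gives $\ov r=1$, so all three factors $r-1,r-\omega,r-\omega^2$ have positive valuation. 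Writing $a=\ord_3(r-1)$ and using $r-\omega=(r-1)-(\omega-1)$ with $\ord_3(\omega-1)=\ord_3(\omega^2-1)=\tfrac12$, a short case analysis on whether $a<\tfrac12$, $a=\tfrac12$, or $a>\tfrac12$, together with $\ord_3(r-1)+\ord_3(r-\omega)+\ord_3(r-\omega^2)=1$, forces $a=\tfrac13$ (in the cases $a\ge\tfrac12$ the sum of the three valuations would exceed $1$). This completes (1).

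Finally, (2) is a formal consequence of (1). Writing $a=\wp(c\Omega/\Delta)-1=3^{1/3}u_1+\eta$ with $\ord_3(\eta)\ge\tfrac12$, I expand $\wp^3-1=a^3+3a^2+3a$. The terms $3a$ and $3a^2$ have valuations $\tfrac43$ and $\tfrac53$, both exceeding $\tfrac76$, while in $a^3=3u_1^3+3\cdot 3^{2/3}u_1^2\eta+\cdots$ every term beyond the leading $3u_1^3$ has valuation $\ge\tfrac{13}{6}>\tfrac76$. Hence $\wp(c\Omega/\Delta)^3-1\equiv 3u_1^3\mod 3^{7/6}$, as asserted.
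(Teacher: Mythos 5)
Your proposal is correct, and its overall skeleton matches the paper's: use Lemma \ref{val-pt} together with $x=u^2X+r$ and $\ord_3(u)=\tfrac14$ to reduce everything to the constant $r$, prove $\ord_3(r-1)=\tfrac13$, set $u_1=(r-1)/3^{1/3}$, and deduce (2) from (1). The two places where you genuinely diverge are both fine and arguably cleaner. For the key valuation, the paper rewrites $r-1\equiv-\sqrt[3]{14}^{-1}\bigl(\sqrt[3]{1-3\sqrt{-3}}+\sqrt[3]{14}\bigr)\bmod 3$ and extracts the valuation $\tfrac13$ by cubing the sum of cube roots; you instead compute $r^3=-128/(1+3\sqrt{-3})$ directly from \eqref{r}, observe $\ord_3(r^3-1)=1$, and pin down $\ord_3(r-1)=\tfrac13$ from the factorization $r^3-1=(r-1)(r-\omega)(r-\omega^2)$ together with $\ord_3(\omega^i-1)=\tfrac12$ and the fact that cubing is injective on the residue field. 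This avoids the paper's somewhat opaque intermediate identity and makes the case analysis explicit. For (2), the paper uses the same factorization trick on $\wp^3-1=\prod_j(\wp-\omega^j)$ with each factor $\equiv 3^{1/3}u_1\bmod 3^{1/2}$, whereas you expand $(1+a)^3-1=a^3+3a^2+3a$ and check each term against the threshold $\tfrac76$; the two bookkeeping schemes give the same bound (the binding term being of valuation $\tfrac76$ either way). No gaps.
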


\begin{proof}
Noting that $x=u^2X+r$ and $\ord_3(u)=\frac{1}{4}$, Lemma \ref{val-pt} gives the equality
\[x([c]Q)-4= x([c]Q)-1-3\equiv r-1\mod 3^{\frac{1}{2}}.\]
Recall from \eqref{r} that $r=4\frac{u^2}{\sqrt{-3}}$. Thus, we have
\begin{equation}\label{r-1mod3}
  r-1\equiv -\sqrt[3]{14}^{-1}\left(\sqrt[3]{1-3\sqrt{-3}}+\sqrt[3]{14}\right)\mod 3.
\end{equation}
By considering the third power of $\sqrt[3]{1-3\sqrt{-3}}+\sqrt[3]{14}$, we know that $\sqrt[3]{1-3\sqrt{-3}}+\sqrt[3]{14}$ has $3$-adic valuation equal to $\frac{1}{3}$. Therefore, defining $u_1=\frac{r-1}{3^{\frac{1}{3}}}$ and noting that $4\wp\left(\frac{c\Omega}{\Delta}\right)=x([c]Q)$ and $4^{-1}\in 1+3\BZ_3$,
we obtain the first assertion.

For the second assertion, let $\omega=\frac{-1+\sqrt{-3}}{2}$ be a primitive $3$rd root of unity. Note that $\ord_3(\omega^i-1)=\frac{1}{2}$ for $i=1,2$, so that the first assersion gives
\[\wp\left(\frac{c\Omega}{\Delta}\right)-\omega^i\equiv \wp\left(\frac{c\Omega}{\Delta}\right)-1\equiv 3^{\frac{1}{3}}u_1\mod3^{\frac{1}{2}}\; \;\quad\text{ for } i=1,2.\]
Thus
\[\begin{aligned}\wp\left(\frac{c\Omega}{\Delta}\right)^3-1= & \prod^2_{j=0}(3^{\frac{1}{2}}A_j+3^{\frac{1}{3}}u_1)\\
\equiv& 3u^3_1\mod 3^\frac{7}{6}.
\end{aligned}\]
Here, $A_j$ ($j=0,1,2$) are $3$-adic integers. The second assertion now follows.
\end{proof}

\begin{thm}[Zhang \cite{szhang}]\label{zhang1}
Let $p\equiv 2,5\mod 9$ be an odd prime, and recall that $D=p$ or $p^2$. Then
\[L(C_D,1)\neq 0\]
and $\ord_3\left(\frac{L(C_D,1)}{\Omega_D}\right)=0$. Moreover, the $3$-part of the Birch--Swinnerton-Dyer conjecture holds for $C_D$.
\end{thm}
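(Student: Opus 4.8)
The plan is to read the $3$-adic valuation of $L(C_D,1)/\Omega_D$ directly off the finite character sum of Proposition~\ref{ste-f1}, and then to convert the resulting non-vanishing into the $3$-part of Birch--Swinnerton-Dyer through the Iwasawa theory of CM curves. First I would rewrite the special value formula as
\[
\frac{L(C_D,1)}{\Omega_D}=\frac{-D^{1/3}}{2p}\sum_{c\in\mathfrak{C}}\left(\frac{c}{D}\right)_3\frac{1}{\wp\!\left(\frac{c\Omega}{p}\right)-1}.
\]
Since $D$, $p$ and $2$ are prime to $3$, the factor $-D^{1/3}/(2p)$ is a $3$-adic unit, so everything reduces to computing $\ord_3$ of the sum $S:=\sum_{c\in\mathfrak{C}}\left(\frac{c}{D}\right)_3\bigl(\wp(c\Omega/p)-1\bigr)^{-1}$.

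Writing $(\wp-1)^{-1}=(\wp^2+\wp+1)/(\wp^3-1)$ and invoking Lemma~\ref{key-val}, whose part (2) gives $\wp(c\Omega/p)^3-1\equiv 3u_1^3\bmod 3^{7/6}$ with $u_1$ independent of $c$, each summand expands as
\[
\frac{1}{\wp(c\Omega/p)-1}=3^{-1/3}u_1^{-1}+\bigl(\text{higher-order terms depending on }c\bigr).
\]
The leading coefficient $3^{-1/3}u_1^{-1}$ is the same for every $c$, hence is annihilated by $\sum_{c\in\mathfrak{C}}\left(\frac{c}{D}\right)_3=0$ (the symbol $c\mapsto\left(\frac{c}{D}\right)_3$ is a non-trivial cubic character of $(\CO_K/p\CO_K)^\times$, as $3\mid p^2-1$). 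Thus the valuation of $S$ is decided by the first genuinely $c$-dependent term.

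The main obstacle is to show that, after this leading cancellation, $S$ has valuation exactly $0$. Here I would run Zhao's averaging argument in its $3$-adic incarnation: expand $\wp(c\Omega/p)$ about $1$ using Lemma~\ref{key-val}(1) to higher order in fractional powers of $3$, at each level splitting the contribution into a $c$-independent part, killed by $\sum_{c\in\mathfrak{C}}\left(\frac{c}{D}\right)_3=0$, and a $c$-dependent remainder. One must verify that all contributions of negative valuation cancel under the character sum and that the valuation-$0$ part survives as a unit. In practice the $c$-dependent remainders reassemble into the algebraic $L$-values of $C_p$ and $C_{p^2}$, and I would establish a congruence between $L(C_p,1)/\Omega_p$ and $L(C_{p^2},1)/\Omega_{p^2}$ to separate the two cases and pin each valuation to $0$. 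This bookkeeping of fractional $3$-adic valuations, where the precision of Lemma~\ref{key-val} is consumed, is the technical heart of the proof.

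Once $\ord_3\bigl(L(C_D,1)/\Omega_D\bigr)=0$ is established, the non-vanishing $L(C_D,1)\neq0$ is immediate. For the $3$-part of Birch--Swinnerton-Dyer I would then argue that non-vanishing forces $C_D(\BQ)$ to be finite of rank $0$ and, by \cite{rubin87}, $\Sha(C_D)$ to be finite; Cassels' theorem \cite{cassels62} makes its order a perfect square. As $C_D$ has trivial rational torsion and, by the computation in \cite{zk87}, Tamagawa product prime to $3$, the $3$-part of the conjecture is equivalent to $\ord_3\bigl(\#\Sha(C_D)\bigr)=0$, which in turn follows from a $3$-descent showing $\Sha(C_D)[3]=0$ (or, equivalently, from Rubin's bound on $\#\Sha$ together with the square-order constraint). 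This matches the analytic valuation $0$ and yields the claimed $3$-part of Birch--Swinnerton-Dyer.
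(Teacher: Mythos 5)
Your reduction to the character sum $S=\sum_{c\in\mathfrak{C}}\left(\frac{c}{D}\right)_3\bigl(\wp(c\Omega/p)-1\bigr)^{-1}$ and your closing deduction of the $3$-part of Birch--Swinnerton-Dyer from $L(C_D,1)\neq 0$ (Rubin for finiteness of $\Sha$, Cassels for square order, $3$-descent for $\Sha(C_D)[3]=0$, Tamagawa product prime to $3$) both match the paper. The gap is in the analytic middle step. You correctly note that the $c$-independent leading term $3^{-1/3}u_1^{-1}$ of each summand is annihilated by $\sum_c\left(\frac{c}{D}\right)_3=0$, but the next term of the expansion has valuation $-1/6$ and is genuinely $c$-dependent: Lemma \ref{key-val} controls $\wp(c\Omega/p)-1$ only modulo $3^{1/2}$, so writing $\wp(c\Omega/p)-1=3^{1/3}u_1+3^{1/2}A_c$ the second term is $-3^{-1/6}A_cu_1^{-2}$ with $A_c\in\ov{\BZ}_3$ unknown and varying with $c$, and character orthogonality says nothing about $\sum_c\left(\frac{c}{D}\right)_3A_c$. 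Your iteration "split off the $c$-independent part at each level" therefore stalls at the first level after the leading term. The auxiliary tools you invoke do not close this: a congruence between the algebraic $L$-values of $C_p$ and $C_{p^2}$ (the paper's Lemma \ref{1mod3}) is used to pin down the residue class modulo $3$ \emph{after} the valuation is known to be $0$, and Zhao's averaging over the divisors $d\mid p^2$ only controls the \emph{sum} of the algebraic $L$-values of $C_1$, $C_p$, $C_{p^2}$, not each term.

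The missing idea is the $\mu_3$-twisting identity. Since $\omega\wp(c\Omega/p)=\wp(\omega c\Omega/p)$ and multiplication by $\omega$ permutes the level sets $B_k=\{c\in\mathfrak{C}:\left(\frac{c}{D}\right)_3=\omega^k\}$ cyclically, the full character sum collapses to
\[
-\frac{2p\sqrt{3}}{\Omega}L(C_D,1)=\sum_{c\in B_0}\left(\frac{1}{X-1}+\frac{\omega}{\omega X-1}+\frac{\omega^2}{\omega^2X-1}\right)\Big|_{X=\wp(c\Omega/p)}=\sum_{c\in B_0}\frac{3\wp\left(\frac{c\Omega}{p}\right)^2}{\wp\left(\frac{c\Omega}{p}\right)^3-1}.
\]
After this regrouping the numerator and denominator each have valuation exactly $1$ with $c$-independent leading units ($3r^2$ and $3u_1^3$ by Lemma \ref{key-val}), so every summand is $r^2u_1^{-3}+O(3^{1/6})$ and the sum is $r^2u_1^{-3}\cdot\#(B_0)$ up to higher order; since $\#(B_0)=(p^2-1)/3$ is prime to $3$ for $p\equiv 2,5\bmod 9$, the valuation is $0$. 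Without this step the local information you have about $\wp(c\Omega/p)$ is not fine enough to conclude, so as written the proposal does not establish $\ord_3\bigl(L(C_D,1)/\Omega_D\bigr)=0$.
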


\begin{proof}
The following proof is contained in \cite{szhang}, but we give it here for the convenience of the reader. By Proposition \ref{ste-f1}, we have
$$
  \frac{2p\sqrt{3}}{\Omega} L(C_D,1)=-\sum_{c\in \mathfrak{C}}\left(\frac{c}{D}\right)_3\frac{1}{\wp\left(\frac{c\Omega}{p}\right)-1}.
$$
Now, we define
\[B_k=\left\{c\in \mathfrak{C}: \left(\frac{c}{D}\right)_3=\omega^k\right\}\]
where $k=0,1,2$. We use the convention that $k+1,k+2$ are also in $\{0,1,2\}$ via taking modulo $3$ in the following argument.

We define
\[S_k=\sum_{c\in B_k}\frac{1}{\wp\left(\frac{c\Omega}{p}\right)-1}.\]
Note that $\left(\frac{\omega}{p}\right)_3=\omega$ or $\omega^2$ according as $p\equiv 2\mod 9$ or $p\equiv 5\mod 9$, and that $\omega \wp\left(\frac{c\Omega}{p}\right)=\wp\left(\frac{\omega c\Omega}{p}\right)$. Then
\[\omega B_k=\begin{cases}B_{k+1} &\mbox{if $D=p$ and $p\equiv 2\mod 9$ or if $D=p^2$ and $p\equiv 5\mod 9$,}\\
B_{k+2}&\mbox{if $D=p$ and $p\equiv 5\mod 9$ or if $D=p^2$ and $p\equiv2\mod 9$.}
\end{cases}
\]

We just give the details of the proof in the cases $D=p$ with $p\equiv 2\mod 9$ and $D=p^2$ with $p\equiv 5\mod 9$. The other cases can be proven in the same way.
In these cases, we have
\[\begin{aligned}-\frac{2p\sqrt{3}}{\Omega} L(C_D,1)
=&\sum_{c\in B_0}\left(\frac{1}{\wp\left(\frac{c\Omega}{p}\right)-1}+\frac{\omega}{\omega \wp\left(\frac{c\Omega}{p}\right)-1}+\frac{\omega^2}{\omega^2 \wp\left(\frac{c\Omega}{p}\right)-1}\right)\\
=&\sum_{c\in B_0}\frac{3\wp\left(\frac{c\Omega}{p}\right)^2}{\wp\left(\frac{c\Omega}{p}\right)^3-1}.
\end{aligned}\]
Now, by \eqref{f200} and Lemma \ref{key-val}, we have
\[3\wp\left(\frac{c\Omega}{p}\right)^2\equiv 3r^2\mod 3^{\frac{3}{2}}, \qquad \wp\left(\frac{c\Omega}{p}\right)^3-1\equiv 3u^3_1\mod 3^{\frac{7}{6}}.\]
Recalling that $u_1$ and $r$ are independent of $c$, we obtain
$$-\frac{2p\sqrt{3}}{\Omega} L(C_D,1)\equiv  (r^2u^{-3}_1)\cdot \#(B_0)\mod 3^{\frac{1}{6}}
$$
which is a $3$-adic unit, since $\#(B_0)=\frac{p^2-1}{3}$ is prime to $3$ when $p\equiv 2, 5\mod 9$.
Noting that
\[\Omega_D=\frac{\Omega}{\sqrt{3}D^{\frac{1}{3}}}\]
and $D$ is prime to $3$, we obtain $\ord_3\left(\frac{L(C_D,1)}{\Omega_D}\right)=0$. The theorem follows since we know that the $3$-part of $\Sha(C_D)$ is trivial by $3$-descent (see \cite{szhang}).
\end{proof}

It follows from Theorem \ref{zhang1} of Zhang that $\frac{L(C_D,1)}{\Omega_D}\in \BZ^\times_3$, and we can study its value modulo $3$.
Now, we can prove the explicit modulo $3$ Birch--Swinnerton-Dyer conjecture for $C_D$.

\begin{lem}\label{1mod3}
\noindent
\begin{enumerate}
  \item If $p\equiv 5\mod 9$, then
  \[\frac{L(C_{p^2},1)}{2\Omega_{p^2}}\equiv \frac{L(C_p,1)}{\Omega_p}\mod 3.\]
  \item If $p\equiv 2\mod 9$, then
  \[\frac{L(C_{p^2},1)}{\Omega_{p^2}}\equiv \frac{L(C_p,1)}{2\Omega_p}\mod 3.\]
\end{enumerate}
\end{lem}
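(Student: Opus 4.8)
The plan is to compute both algebraic $L$-values $3$-adically and compare them, reusing the reduction from the proof of Theorem \ref{zhang1}. Writing $B_k=\{c\in\mathfrak{C}:\left(\frac cp\right)_3=\omega^k\}$ and $\wp=\wp(c\Omega/p)$, I would first record the two partial-fraction identities
\[
\frac{1}{\wp-1}+\frac{\omega}{\omega\wp-1}+\frac{\omega^2}{\omega^2\wp-1}=\frac{3\wp^2}{\wp^3-1},
\qquad
\frac{1}{\wp-1}+\frac{\omega^2}{\omega\wp-1}+\frac{\omega}{\omega^2\wp-1}=\frac{3\wp}{\wp^3-1}.
\]
Grouping the sum of Proposition \ref{ste-f1} according to the $B_k$ (noting $\left(\frac{c}{p^2}\right)_3=\left(\frac cp\right)_3^2$ for the second curve) and using $\wp(\omega z)=\omega\wp(z)$ exactly as in Theorem \ref{zhang1}, I would obtain, for $p\equiv2\bmod9$,
\[
-\frac{2p\sqrt3}{\Omega}L(C_p,1)=\sum_{c\in B_0}\frac{3\wp^2}{\wp^3-1},
\qquad
-\frac{2p\sqrt3}{\Omega}L(C_{p^2},1)=\sum_{c\in B_0}\frac{3\wp}{\wp^3-1},
\]
the two right-hand sides being interchanged when $p\equiv5\bmod9$ (there $\omega B_k=B_{k+2}$ in place of $B_{k+1}$). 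Dividing by $\Omega_D=\Omega/(\sqrt3\,D^{1/3})$ cancels $\Omega$ and $\sqrt3$, so that for $p\equiv2$,
\[
\frac{L(C_p,1)}{\Omega_p}=-\frac{1}{2p^{2/3}}\sum_{c\in B_0}\frac{3\wp^2}{\wp^3-1},\qquad
\frac{L(C_{p^2},1)}{\Omega_{p^2}}=-\frac{1}{2p^{1/3}}\sum_{c\in B_0}\frac{3\wp}{\wp^3-1}.
\]

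Next I would invoke \eqref{f200} and Lemma \ref{key-val}. Since $4\wp\equiv r\bmod3^{1/2}$ and $\wp^3-1\equiv3u_1^3\bmod3^{7/6}$ with $r,u_1$ independent of $c$, each summand is constant modulo $3^{1/6}$, namely $\frac{3\wp^2}{\wp^3-1}\equiv r^2/u_1^3$ and $\frac{3\wp}{\wp^3-1}\equiv r/u_1^3$. Writing $N=\#B_0=(p^2-1)/3$, a $3$-adic unit by \eqref{bcong}, this gives (for $p\equiv2$)
\[
\frac{L(C_p,1)}{\Omega_p}\equiv-\frac{Nr^2}{2u_1^3p^{2/3}},\qquad
\frac{L(C_{p^2},1)}{\Omega_{p^2}}\equiv-\frac{Nr}{2u_1^3p^{1/3}}\pmod{3^{1/6}}.
\]
Since $\tfrac12\equiv2\bmod3$, assertion (2) is equivalent to $\frac{L(C_p,1)}{\Omega_p}-2\frac{L(C_{p^2},1)}{\Omega_{p^2}}\equiv0\bmod3$, and I would form this difference and factor it as
\[
\frac{L(C_p,1)}{\Omega_p}-2\frac{L(C_{p^2},1)}{\Omega_{p^2}}\equiv-\frac{Nr}{u_1^3}\cdot\frac{r-2p^{1/3}}{2p^{2/3}}\pmod{3^{1/6}}.
\]

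Everything now reduces to two congruences modulo $3^{1/3}$. By Lemma \ref{key-val}, $r=1+3^{1/3}u_1$, so $r\equiv1\bmod3^{1/3}$. For the cube root, I would observe that $(X+2)^3-p$ is Eisenstein at $3$: its lower coefficients $6,\,12,\,8-p$ all have $\ord_3\ge1$, and $\ord_3(8-p)=1$ exactly because $p\not\equiv8\bmod9$ under \eqref{bcong}. Hence $p^{1/3}-2$ is a uniformizer of the totally ramified cubic field $\BQ_3(p^{1/3})$, giving $p^{1/3}\equiv2\bmod3^{1/3}$ and so $2p^{1/3}\equiv4\equiv1\bmod3^{1/3}$. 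Thus $r-2p^{1/3}\equiv0\bmod3^{1/3}$, and as $-Nr/u_1^3$ times $1/(2p^{2/3})$ is a $3$-adic unit, the displayed difference has $\ord_3\ge\tfrac13>\tfrac16$. Being a difference of rational integers (by Stephens' integrality), it must therefore be divisible by $3$, which proves (2). For (1) ($p\equiv5$) the same steps, with $\wp^2$ and $\wp$ interchanged, reduce the matter to $rp^{1/3}\equiv2\bmod3^{1/3}$, which again follows from $r\equiv1$ and $p^{1/3}\equiv2$. The main obstacle, and the only genuinely new ingredient beyond Theorem \ref{zhang1}, is pinning down $p^{1/3}\bmod3^{1/3}$; it is precisely the Eisenstein property of $(X+2)^3-p$ that both supplies this and explains why the hypothesis \eqref{bcong} is needed here.
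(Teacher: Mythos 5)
Your proof is correct, but it takes a genuinely different route from the paper's. The paper writes each algebraic $L$-value as $A_iB_i$ with $B_i=\sum_{c\in\mathfrak{C}}\left(\frac{c}{p^i}\right)_3\frac{1}{\wp(c\Omega/p)-1}$ and unit prefactors $A_i$, and compares the two sums \emph{termwise over all of} $\mathfrak{C}$: since $\ord_3\left(\left(\frac{c}{p^2}\right)_3-\left(\frac{c}{p}\right)_3\right)\geq\frac12$ while each denominator has valuation exactly $\frac13$, one gets $\ord_3(B_1-B_2)\geq\frac16$, and together with $\ord_3(A_1-A_2)\geq\frac13$ and the identity $A_1B_1-A_2B_2=(A_1-A_2)B_1+A_2(B_1-B_2)$ the rational difference is divisible by $3^{1/6}$, hence by $3$. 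You instead recycle the $B_0$-collapse and partial-fraction identities from the proof of Theorem \ref{zhang1} to obtain closed-form approximations $-Nr^2/(2u_1^3p^{2/3})$ and $-Nr/(2u_1^3p^{1/3})$ for the two algebraic $L$-values modulo $3^{1/6}$, and then check the congruence of these explicit expressions via $r\equiv1$ and $\sqrt[3]{p}\equiv2\bmod3^{1/3}$. Both arguments rest on the same inputs (Proposition \ref{ste-f1}, Lemma \ref{key-val}, Stephens' integrality); the paper's is shorter, while yours is sharper in that it pins down each individual $L$-value modulo $3^{1/6}$ rather than only the difference, which is closer in spirit to what Theorem \ref{mod3-bsd} later extracts by Zhao's induction. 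Two small quibbles that do not affect correctness: your constants $r^2/u_1^3$ and $r/u_1^3$ should strictly be $r^2/(16u_1^3)$ and $r/(4u_1^3)$ since $4\wp\equiv r$, but $4\equiv16\equiv1\bmod3$ so nothing changes; and the claim that the Eisenstein property of $(X+2)^3-p$ is ``why \eqref{bcong} is needed here'' overstates the point --- $\sqrt[3]{p}\equiv2\bmod3^{1/3}$ already follows from $p\equiv2\bmod3$ (for $p\equiv8\bmod9$ the valuation of $\sqrt[3]{p}-2$ is merely larger), and what \eqref{bcong} genuinely supplies in this circle of ideas is $\ord_3(\sqrt[3]{p}-2)=\frac13$ exactly and $3\nmid\#B_0$, which are used elsewhere.
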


We remark that the factor $2$ in the denominators of the above congruences comes from the Tamagawa factor of $C_D$ at $3$.

\begin{proof}
We just give the details of the proof for (1), and (2) can be obtained similarly. By Proposition \ref{ste-f1} and the definition of $\Omega_d$, we have
\[\frac{L(C_{p^2},1)}{2\Omega_{p^2}}=-\frac{\sqrt[3]{p^2}}{4p}\sum_{c\in \mathfrak{C}}\left(\frac{c}{p^2}\right)_3\frac{1}{\wp\left(\frac{c\Omega}{p}\right)-1}\]
and
\[\frac{L(C_{p},1)}{\Omega_{p}}=-\frac{\sqrt[3]{p}}{2p}\sum_{c\in \mathfrak{C}}\left(\frac{c}{p}\right)_3\frac{1}{\wp\left(\frac{c\Omega}{p}\right)-1}.\]

Recall that $\omega=\frac{-1+\sqrt{-3}}{2}$. Since $\ord_3(\omega^i-\omega^j)\geq \frac{1}{2}$ for $i,j=0,1,2$, we obtain
\[\ord_3\left(\left(\frac{c}{p^2}\right)_3-\left(\frac{c}{p}\right)_3\right)\geq \frac{1}{2}.\]
From Lemma \ref{key-val}, we know that
\[\ord_3\left(\wp\left(\frac{c\Omega}{p}\right)-1\right)=\frac{1}{3}.\]
We define
$$B_1:=\sum_{c\in \mathfrak{C}}\left(\frac{c}{p^2}\right)_3\frac{1}{\wp\left(\frac{c\Omega}{p}\right)-1}$$
and
$$B_2:=\sum_{c\in \mathfrak{C}}\left(\frac{c}{p}\right)_3\frac{1}{\wp\left(\frac{c\Omega}{p}\right)-1}.$$
Then
\[\ord_3(B_1-B_2)\geq \frac{1}{6}.\]
Furthermore, we define $A_1:=-\frac{\sqrt[3]{p^2}}{4p}$ and $A_2:=-\frac{\sqrt[3]{p}}{2p}$. Noting that $p\equiv 2\mod 3$, we have
\[\sqrt[3]{p}\equiv 2\mod 3^{\frac{1}{3}},\]
and thus
\[A_1-A_2\equiv0\mod 3^{\frac{1}{3}}. \]
Now, by the relations
\[\frac{L(C_{p^2},1)}{2\Omega_{p^2}}=A_1B_1,\qquad \frac{L(C_p,1)}{\Omega_p}=A_2B_2\]
and the identity
\[A_1B_1-A_2B_2=(A_1-A_2)B_1+A_2(B_1-B_2),\]
we obtain that $3^{\frac{1}{6}}$ divides
\[\frac{L(C_{p^2},1)}{2\Omega_{p^2}}-\frac{L(C_p,1)}{\Omega_p}.\]
Our assertion now follows on noting that these $L$-values are in $\BZ_3$.
\end{proof}

From Theorem \ref{zhang1}, we know that the residue class of the two congruences in Lemma \ref{1mod3} are non-zero modulo $3$. Now, we use Zhao's induction argument to obtain the main result of this section giving the precise residue class modulo $3$ in which these $L$-values lie.

\begin{thm}\label{mod3-bsd}
\noindent
\begin{enumerate}
  \item For $p\equiv 5\mod 9$, we have
\[\frac{L(C_p,1)}{\Omega_p}\equiv \frac{L(C_{p^2},1)}{2\Omega_{p^2}}\equiv 1\mod 3.\]
Moreover, the explicit modulo $3$ Birch--Swinnerton-Dyer conjecture holds for $C_p$ and $C_{p^2}$.
  \item For $p\equiv 2\mod 9$, we have
\[\frac{L(C_p,1)}{2\Omega_p}\equiv \frac{L(C_{p^2},1)}{\Omega_{p^2}}\equiv 1\mod 3.\]
Moreover, the explicit modulo $3$ Birch--Swinnerton-Dyer conjecture holds for $C_p$ and $C_{p^2}$.
\end{enumerate}
\end{thm}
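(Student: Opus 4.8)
The plan is to reduce the whole statement to pinning down a single residue class modulo $3$. By Theorem~\ref{zhang1} both $\frac{L(C_p,1)}{\Omega_p}$ and $\frac{L(C_{p^2},1)}{\Omega_{p^2}}$ lie in $\BZ_3^\times$, so each is $\equiv 1$ or $\equiv 2$ modulo $3$; and by Lemma~\ref{1mod3} the two values, with the Tamagawa normalisation built in, are congruent modulo $3$. Hence I would only need to compute $\frac{L(C_D,1)}{\Omega_D}$ for the distinguished curve in each case --- namely $D=p$ when $p\equiv 2\bmod 9$ and $D=p^2$ when $p\equiv 5\bmod 9$, which are precisely the cases where the character regrouping in the proof of Theorem~\ref{zhang1} is available --- and to show it is $\equiv 2\bmod 3$. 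Since the factor $2$ in the denominators is the Tamagawa factor of $C_D$ at $3$, this gives $\frac{L(C_D,1)}{2\Omega_D}\equiv 2^{-1}\cdot 2\equiv 1\bmod 3$, and Lemma~\ref{1mod3} then transports the congruence to the companion curve.

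First I would resume the computation from the proof of Theorem~\ref{zhang1}: for the distinguished $D$ the regrouping already gives
\[-\frac{2p\sqrt3}{\Omega}L(C_D,1)=\sum_{c\in B_0}\frac{3\,\wp(c\Omega/p)^2}{\wp(c\Omega/p)^3-1}\equiv \#(B_0)\,r^2u_1^{-3}\bmod 3^{\frac16},\]
where $r^2u_1^{-3}=3r^2/(r-1)^3$ is a $c$-independent $3$-adic unit by Lemma~\ref{key-val} and $\#(B_0)=\frac{p^2-1}{3}$ is prime to $3$. Rewriting in terms of the algebraic $L$-value, this reads
\[\frac{L(C_D,1)}{\Omega_D}\equiv -\frac{D^{1/3}}{2p}\cdot\frac{p^2-1}{3}\cdot r^2u_1^{-3}\bmod 3^{\frac16}.\]
The observation that makes this weak precision enough is that $\frac{L(C_D,1)}{\Omega_D}$ is a rational integer by Stephens: for $m\in\BZ$ and $a\in\BZ$ one has $\ord_3(m-a)\ge\frac16$ if and only if $\ord_3(m-a)\ge 1$, so a congruence modulo $3^{1/6}$ against a rational integer is automatically a congruence modulo $3$. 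It therefore remains to recognise the right-hand side above as a rational integer modulo $3$.

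This last point is the technical heart, and here I would deploy Zhao's inductive argument. The obstruction is that $D^{1/3}$ and $r^2u_1^{-3}$ are genuine cube-root quantities living in a ramified extension of $\BQ_3$ (note that $p$ is not a cube in $\BQ_3$ since $p\equiv 2,5\bmod 9$), and the leading-order analysis behind Theorem~\ref{zhang1} sees only their valuations, losing the residue. The hard part will be to resolve the pairing of these cube roots so that their product becomes rational modulo $3^{1/6}$: I would combine the explicit description \eqref{r-1mod3} of $r-1$ through $\sqrt[3]{1-3\sqrt{-3}}$ and $\sqrt[3]{14}$, the finer expansion in Lemma~\ref{key-val}(2), and the values $\wp(\Omega/3)=1$, $\wp'(\Omega/3)=-\sqrt3$ on $A$, organised by an inductive comparison that transports the sum to a base case in which the factor $D^{1/3}$ is rational and the residue can be read off. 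Once the right-hand side is shown to be $\equiv 2\bmod 3$, the explicit modulo $3$ Birch--Swinnerton-Dyer conjecture follows: the $3$-part of $\Sha(C_D)$ is trivial by $3$-descent, so $\#\Sha(C_D)$ is prime to $3$ and, being a square by Cassels' theorem, is $\equiv 1\bmod 3$; together with the identified Tamagawa factor and the trivial torsion this is exactly the residue predicted by the Birch--Swinnerton-Dyer formula.
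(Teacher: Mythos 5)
Your reduction steps are sound --- using Theorem~\ref{zhang1} for unit-ness, Lemma~\ref{1mod3} to tie the two curves together, and the observation that a congruence modulo $3^{1/6}$ between rational integers upgrades automatically to a congruence modulo $3$ --- and your formula
$\frac{L(C_D,1)}{\Omega_D}\equiv -\frac{D^{1/3}}{2p}\cdot\frac{p^2-1}{3}\cdot r^2u_1^{-3}\bmod 3^{1/6}$
is a correct restatement of the computation inside the proof of Theorem~\ref{zhang1}. But the decisive step is missing: you never determine the residue of $r^2u_1^{-3}$ (equivalently of $u_1^3=(r-1)^3/3$) in $\ov{\BZ}_3/3^{1/6}\ov{\BZ}_3$, nor verify that its product with $D^{1/3}$ is congruent to a rational integer, nor derive that the result is $2\bmod 3$. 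The paragraph beginning ``The hard part will be to resolve the pairing of these cube roots'' announces the problem and then replaces its solution with an unspecified ``inductive comparison that transports the sum to a base case in which the factor $D^{1/3}$ is rational''; no such base case exists in Zhao's argument, and the sentence ``Once the right-hand side is shown to be $\equiv 2\bmod 3$'' assumes exactly what had to be proved. The theorem's entire content beyond Theorem~\ref{zhang1} is the identification of this residue, so as written the proposal proves nothing new. (For what it is worth, your route \emph{could} be completed by a direct computation: cubing \eqref{r-1mod3} gives $(\sqrt[3]{1-3\sqrt{-3}}+\sqrt[3]{14})^3\equiv 15-3\sqrt{-3}\bmod 3^{4/3}$, whence $u_1^3\equiv -5\cdot 14^{-1}\equiv 2\bmod 3^{1/3}$ and $r^2\equiv 1\bmod 3^{1/3}$, and the residues then come out as claimed. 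But this computation is precisely the part you did not supply.)

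Note also that the paper's actual proof is organised so as to \emph{avoid} ever evaluating $r^2u_1^{-3}$. It applies Zhao's averaging to the triple $C_1$, $C_p$, $C_{p^2}$, taking the trace down to $H_p$ in \eqref{p-f3}: on the right-hand side the only term involving the Weierstrass values, $-\frac{3\sqrt[3]{p}}{2p}\sum_{c\in V}\frac{1}{\wp(c\Omega/p)-1}$, has \emph{positive} $3$-adic valuation by Lemma~\ref{key-val}, so the residue of the sum of the three algebraic $L$-values is read off from the elementary counting term $-\frac{\sqrt[3]{p}}{p}\#(V)$ together with the explicit $d=1$ term; Lemma~\ref{1mod3} then expresses that sum as $2u+3^{1/3}\ov{\BZ}_3$ in the single unknown $u$, forcing $u=1$. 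In other words, the paper extracts the residue from the divisor-sum identity where the transcendental contribution is subleading, whereas you try to extract it from a single $L$-value where that contribution is the leading term --- which is why you run head-on into the cube-root residue you cannot avoid computing.
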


For simplicity, we denote by $\delta$ the period $\Omega_p$ of $C_p$ in the following. We will just give the details for the case $p\equiv 5\mod 9$, and the other case is similar. Recall that $K(3p)$ is the ray class field of $K$ modulo $(3p)$. Then we have $\sqrt[3]{p}\in K(3p)$, and we recall that $H_p$ is the field $K(\sqrt[3]{p})$.

For any positive divisor $d\mid (p)^2$, we know that the period relation
\begin{equation}\label{f203}
  \Omega_d=\delta\sqrt[3]{(p/d)}
\end{equation}
holds.

By formula \ref{coates-formula}, and the period relation \eqref{f203}, we have
$$
  \frac{L^{(3p)}(C_d,1)}{\delta\sqrt[3]{p}}=\sum_{\sigma\in \Gal(K(3p)/K)}(\sqrt[3]{d})^{\sigma-1}\frac{1}{3p}\CE^*_1\left(\frac{\delta\sqrt[3]{p}}{3p},\delta\sqrt[3]{p}\CO_K\right)^\sigma.
$$
Note that $\sum_d (\sqrt[3]{d})^{\sigma-1}=3$ if $\sigma$ fixes $\sqrt[3]{d}$, and  $\sum_d (\sqrt[3]{d})^{\sigma-1}=0$ otherwise. Now, summing the above formula over all $d\mid(p)^2$ and noting $\sqrt[3]{p}\in H_p$, we obtain
\begin{equation}\label{p-f3}
  \sum_{d\mid(p)^2}\frac{L^{(3p)}(C_d,1)}{\delta}=3\mathrm{Tr}_{K(3p)/H_p}\left(\frac{1}{3p}\CE^*_1\left(\frac{\delta}{3p},\delta\CO_K\right)\right).
\end{equation}

First, we study the terms on the left hand side of \eqref{p-f3}. We write $u$ ($=1\textrm{ or }2$) for the residue
class of $\frac{L(C_p,1)}{\Omega_p}\mod 3$. Then we have

\begin{lem}\label{p-left-3}
\noindent
\begin{enumerate}
  \item For $d=1$, we have $\frac{L^{(3p)}(C,1)}{\delta}=\frac{\sqrt[3]{p}(p+1)}{3p}\in (\sqrt[3]{p}+3\ov{\BZ}_3)= 2+3^{\frac{1}{3}}\ov{\BZ}_3$.
  \item For $d=p$, we have $\frac{L^{(3p)}(C_p,1)}{\delta}=\frac{L(C_p,1)}{\Omega_p}\in u+3\ov{\BZ}_3$.
  \item For $d=p^2$, we have $\frac{L^{(3p)}(C_{p^2},1)}{\delta}=\frac{L(C_{p^2},1)}{\sqrt[3]{p}\Omega_{p^2}}\in \frac{2u}{\sqrt[3]{p}}+3\ov{\BZ}_3$.
\end{enumerate}
\end{lem}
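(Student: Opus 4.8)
I would prove the three assertions in order, since they are really three instances of the same computation at the three divisors $d=1,p,p^2$, differing only in bookkeeping of cube roots modulo $3$. Throughout, the two tools are the special value formula in Proposition \ref{ste-f1} (together with the unramified continuation to the imprimitive $L^{(3p)}$) and the period relation \eqref{f203}, namely $\Omega_d=\delta\sqrt[3]{p/d}$. The key congruence I would invoke repeatedly is $\sqrt[3]{p}\equiv 2\bmod 3^{1/3}$, which holds because $p\equiv 5\equiv 2\bmod 3$ (the same congruence already used in the proof of Lemma \ref{1mod3}); iterating it gives $\sqrt[3]{p^2}\equiv 4\equiv 1\bmod 3^{1/3}$ and $(\sqrt[3]{p})^{-1}\equiv 2\bmod 3^{1/3}$ after clearing the $3$-adic unit $4^{-1}\in 1+3\BZ_3$.

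For assertion (1), the divisor $d=1$ corresponds to the base curve $C=C_1\colon x^3+y^3=1$, whose rank-zero arithmetic is classical: the algebraic $L$-value $L^{(3p)}(C,1)$ picks up the imprimitive Euler factor at $p$, and a direct evaluation gives $\frac{L^{(3p)}(C,1)}{\Omega}=\frac{p+1}{3p}$ (this is the standard value coming from $\CE^*_1$ at the base lattice, with the factor $p+1$ being $\#\tilde{C}(\BF_p)$-type data from the omitted Euler factor at $p$). Dividing by $\delta=\Omega_p$ and using $\Omega=\sqrt{3}\,p^{1/3}\delta$ converts this into $\frac{\sqrt[3]{p}(p+1)}{3p}$; then reducing mod $3$ with $\sqrt[3]{p}\equiv 2$ and $\frac{p+1}{p}\equiv 1\bmod 3$ (since $p\equiv 2\bmod 3$ gives $p+1\equiv 0$, so I must track that the $3$ in the denominator cancels one factor of $3$ from $p+1$, leaving a $3$-adic unit) places the quotient in $2+3^{1/3}\ov{\BZ}_3$.

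For assertions (2) and (3), I would use that at $d=p$ and $d=p^2$ the conductor of $\psi_D$ already forces $L^{(3p)}(C_D,1)=L(C_D,1)$, so no Euler factor is omitted and the imprimitive and primitive values coincide. Then (2) is immediate from $\delta=\Omega_p$, giving $\frac{L^{(3p)}(C_p,1)}{\delta}=\frac{L(C_p,1)}{\Omega_p}\in u+3\ov{\BZ}_3$ by the very definition of $u$ together with Theorem \ref{zhang1} (which guarantees this is a $3$-adic unit). For (3), the period relation gives $\Omega_{p^2}=\delta\sqrt[3]{1/p}=\delta/\sqrt[3]{p}$, whence $\frac{L^{(3p)}(C_{p^2},1)}{\delta}=\frac{L(C_{p^2},1)}{\sqrt[3]{p}\,\Omega_{p^2}}$; applying Lemma \ref{1mod3}(1), which states $\frac{L(C_{p^2},1)}{2\Omega_{p^2}}\equiv u\bmod 3$, yields $\frac{L(C_{p^2},1)}{\Omega_{p^2}}\equiv 2u\bmod 3$, and dividing by $\sqrt[3]{p}$ lands the quotient in $\frac{2u}{\sqrt[3]{p}}+3\ov{\BZ}_3$.

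The main obstacle, and the only genuinely delicate point, is assertion (1): I must verify the exact rational value $\frac{p+1}{3p}$ for the imprimitive algebraic $L$-value of the base curve and confirm that the single factor of $3$ in the denominator is cancelled by the $3$ dividing $p+1$ (as $p\equiv 2\bmod 3$), so that the result is genuinely a $3$-adic unit in $\sqrt[3]{p}+3\ov{\BZ}_3$ rather than something with negative $3$-adic valuation. I expect this to follow from the explicit Euler-factor computation at $p$ applied to the known value of $\CE^*_1$ at the base lattice $\delta\CO_K$, but it requires care to track the normalizations of $\delta$ versus $\Omega$ correctly. The remaining steps (2) and (3) are then routine consequences of Lemma \ref{1mod3} and the cube-root congruences above.
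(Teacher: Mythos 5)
Your proposal is correct and is essentially the intended computation (the paper states this lemma without proof): parts (2) and (3) follow directly from the period relation \eqref{f203}, the definition of $u$, and Lemma \ref{1mod3}(1), while part (1) comes from the classical value $L(C_1,1)/\Omega_1=\tfrac13$ multiplied by the omitted Euler factor $(1+p^{-1})$ at the inert prime $p$. The only slip is in part (1), where you write $\frac{L^{(3p)}(C,1)}{\Omega}=\frac{p+1}{3p}$ but then use $\Omega=\sqrt{3}\,p^{1/3}\delta$, so your two displayed identities differ by a factor of $\sqrt{3}$; the denominator should be $\Omega_1=\delta\sqrt[3]{p}=\Omega/\sqrt{3}$, after which your final expression $\frac{\sqrt[3]{p}(p+1)}{3p}$ and the mod-$3$ analysis (using that $p\equiv 5\bmod 9$ forces $\ord_3(p+1)=1$ exactly, so the quotient is a $3$-adic unit) are correct.
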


As for the right hand side of \eqref{p-f3}, we have the following standard calculation. Here, we recall that $\Omega=\delta\sqrt{3}\sqrt[3]{p}$. We will use the
same set $\mathfrak{C}$ of representatives of $\Gal(K(3p)/K)\simeq (\CO_K/p\CO_K)^\times$ as in Proposition \ref{ste-f1}. We
denote by $V$ the subset of $\mathfrak{C}$ such that $c\in V$ if and only if $\left(\frac{c}{p}\right)_3=1$. We see that $H_p$ is precisely the fixed field of the subgroup generated by the Artin symbol of elements in~$V$, and thus $V$ gives a set of representative for $\Gal(K(3p)/H_p)$. Recall that $\psi_p$ is the Hecke character over $K$ associated to $C_p$. Then

\[\begin{aligned}\Psi:=& 3\mathrm{Tr}_{K(3p)/H_p}\left(\frac{1}{3p}\CE^*_1\left(\frac{\delta}{3p},\delta\CO_K\right)\right)\\
=&-\frac{1}{p}\mathrm{Tr}_{K(3p)/H_p}\left(\CE^*_1\left(\frac{\delta}{-3p},\delta\CO_K\right)\right)\\
=&-\frac{1}{p\delta}\cdot\delta\sum_{c\in V}\CE^*_1\left(\frac{\psi_p((3c-p))\delta}{-3p},\delta\CO_K\right)\\
=&-\frac{1}{p\delta}\cdot\Omega\sum_{c\in V}\CE^*_1\left(\frac{\psi_p((3c-p))\Omega}{-3p},\Omega\CO_K\right)\\
=&-\frac{1}{p}\sum_{c\in V}\CE^*_1\left(\frac{\psi_p((3c-p))\Omega}{-3p},\Omega\CO_K\right)\sqrt[3]{p}\sqrt{3}.
\end{aligned}\]

Noting that, by our choice of $\mathfrak{C}$, we have $-c\in V$ whenever $c\in V$, the same argument as in the proof of Proposition \ref{ste-f1} shows
\begin{equation}\label{p-f4}
  \Psi=-\frac{3\sqrt[3]{p}}{2p}\sum_{c\in V}\frac{1}{\wp\left(\frac{c\Omega}{p}\right)-1}-\frac{\sqrt[3]{p}}{p}\#(V),
\end{equation}
where $\#(V)=\frac{p^2-1}{3}$.

Now, from Lemma \ref{key-val}, we know that the first term on the right hand side of \eqref{p-f4} has a positive $3$-adic valuation, while
the second term in \eqref{p-f4} is
\[-\frac{\sqrt[3]{p}}{p}\#(V)=-\frac{\sqrt[3]{p}}{p}\left(\frac{p+1}{3}\right)(p-1)\in 2\sqrt[3]{p}+3\ov{\BZ}_3.\]
Moreover, Lemma \ref{p-left-3} and \eqref{p-f3} give
\begin{equation}\label{p-f5}
\frac{L(C_p,1)}{\delta}+\frac{L(C_{p^2},1)}{\delta}\in \sqrt[3]{p}+3\ov{\BZ}_3=2+3^{\frac{1}{3}}\ov{\BZ}_3.
\end{equation}
On the other hand, by (2) and (3) of Lemma \ref{p-left-3}, the sum in \eqref{p-f5} is contained in
\begin{equation}\label{p-f6}
  (u+3\ov{\BZ}_3)+(\frac{2u}{\sqrt[3]{p}}+3\ov{\BZ}_3)=u(1+\frac{2}{\sqrt[3]{p}})+3\ov{\BZ}_3=2u+3^{\frac{1}{3}}\ov{\BZ}_3.
\end{equation}
Here, we use the fact that
$$\sqrt[3]{p}\equiv 2\mod 3^{\frac{1}{3}}$$
and $3^{\frac{1}{3}}$ exactly divides $\sqrt[3]{p}-2$ when $p\equiv 5\mod 9$. Comparing formulas \eqref{p-f5} and \eqref{p-f6}, we obtain $u=1$. This completes the proof of Theorem \ref{mod3-bsd}.

\section{Explicit Modulo $3$ Birch--Swinnerton-Dyer Conjecture for $C_{2^i p^j}$}\label{section3}

The goal of this section is to give the proof of Theorem \ref{main1}. We prove this in the following steps. Firstly,
we use Zhao's induction argument to show that the sum of algebraic part of $L$-values of $C_{2p}$ and $C_{2^2p^2}$ (resp.\ $C_{2p^2}$ and $C_{2^2p}$)  is non-zero when $p\equiv 5\mod 9$ (resp.\ $p\equiv 2\mod 9$) and satisfies an explicit modulo $3$ congruence. Secondly, by establishing some congruences between the algebraic $L$-values in the sum of the first step, we can show each term in the sum  is non-zero and satisfies the explicit modulo $3$ Birch--Swinnerton-Dyer conjecture. In this section, we again write $D$ for $p$ or $p^2$.
The first key result of this section is the following.

\begin{thm}\label{sum-exp-3bsd}
\noindent
\begin{enumerate}
  \item If $p\equiv 5\mod 9$, we have
\[\frac{L(C_{2p},1)}{\Omega_1}+\frac{L(C_{2^2p^2},1)}{\Omega_1}\in 3(2+3^{\frac{1}{3}}\ov{\BZ}_3)\]
and
\[\frac{L(C_{2p},1)}{\Omega_{2p}}+\frac{L(C_{2^2p^2},1)}{\Omega_{2^2p^2}}\in 3(2+3\BZ_3).\]
  \item If $p\equiv 2\mod 9$, we have
\[\frac{L(C_{2^2p},1)}{\Omega_1}+\frac{L(C_{2p^2},1)}{\Omega_1}\in 3(1+3^{\frac{1}{3}}\ov{\BZ}_3)\]
and
\[\frac{L(C_{2^2p},1)}{\Omega_{2^2p}}+\frac{L(C_{2p^2},1)}{\Omega_{2p^2}}\in 3(2+3\BZ_3).\]
\end{enumerate}
\end{thm}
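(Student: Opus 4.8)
The plan is to run Zhao's induction argument at the level of the pair of curves $\{C_{2p}, C_{2^2p^2}\}$ (resp.\ $\{C_{2^2p}, C_{2p^2}\}$), in direct analogy with the computation carried out for $C_p$ and $C_{p^2}$ in Section \ref{section2}. The organizing identity is the trace formula \eqref{coates-formula}. Here the relevant conductor divides $(6p)$, so I would work with the ray class field $K(6p)$ over $K$, and use the period relation $\Omega_d = \delta_2 \sqrt[3]{2^2 p^2/d}$ (with $\delta_2 = \Omega_{2^2 p^2}$, say) to normalize all the imprimitive $L$-values by a common period. Summing \eqref{coates-formula} over all divisors $d \mid (2^2 p^2)$ and exploiting the collapse $\sum_d (\sqrt[3]{d})^{\sigma-1} \in \{0,3\}$ exactly as in the derivation of \eqref{p-f3}, I expect to obtain a master identity expressing the sum of the algebraic $L$-values over all such $d$ as $3\,\mathrm{Tr}_{K(6p)/\msh}$ of a single Kronecker--Eisenstein value, where $\msh$ is the appropriate fixed field containing $\sqrt[3]{2^2 p^2}$.

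First I would compute the left-hand side of this master identity term by term, i.e.\ establish the analogue of Lemma \ref{p-left-3} giving the residue class modulo $3$ of $L^{(6p)}(C_d,1)/\delta_2$ for each $d \mid (2^2 p^2)$. The divisors $d = 1$, $d = 2$, $d = 2^2$, $d = p$, $d = p^2$, $d = 2p^2$, etc.\ split into two groups: those where $C_d$ has rank zero (contributing a nonzero residue computable via the period-relation factor $\sqrt[3]{2^2p^2/d} \bmod 3^{1/3}$ together with Theorem \ref{mod3-bsd} and its $C_{2^i}$-analogues) and the rank-one curves $C_{2p}, C_{2^2p^2}$ whose individual $L$-values vanish. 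The point of the averaging is that the vanishing $L$-values, together with the low-order divisors, are isolated by the trace computation: everything except the two target terms is known modulo $3$, so the sum $\frac{L(C_{2p},1)}{\Omega_1} + \frac{L(C_{2^2p^2},1)}{\Omega_1}$ is pinned down.

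Next I would compute the right-hand side, the trace term, following the template between \eqref{p-f3} and \eqref{p-f4}. Using the explicit Galois action on $\CE^*_1$, the Hecke character formula \eqref{hecke-p}, cubic reciprocity, and the pairing of $c$ with $-c$ that kills the odd $\zeta$ and $\wp'$ contributions, I expect the trace to reduce to $-\frac{c_0\sqrt[3]{\cdot}}{p}\sum_{c \in V'}\frac{1}{\wp(c\Omega/p)-1}$ plus a dominant constant term $\#(V')\cdot(\text{unit})$, where $V'$ indexes $\Gal(K(6p)/\msh)$. By Lemma \ref{key-val}, the $\wp$-sum has strictly positive $3$-adic valuation, so modulo the relevant power of $3$ only the cardinality term survives; here $\#(V') = \tfrac{(p^2-1)}{3}\cdot(\text{index from the prime }2)$, and its residue modulo $3$ is what produces the constants $2$ and $1$ distinguishing the two cases $p \equiv 5$ and $p \equiv 2 \bmod 9$. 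Comparing the two sides then yields the first displayed membership in each part; the second displayed membership (with $\Omega_{2p}, \Omega_{2^2p^2}$ in place of $\Omega_1$) follows by dividing through by the appropriate $\sqrt[3]{\cdot}$ period factor and tracking its residue modulo $3$, exactly as \eqref{p-f6} was deduced from \eqref{p-f5}.

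The main obstacle I anticipate is bookkeeping the contribution of the prime $2$ throughout. Unlike Section \ref{section2}, the conductor and ray class field now involve $2$, so the index $[\,(\CO_K/2\CO_K)^\times : \text{image}\,]$, the cubic residue symbols $\left(\frac{2}{D}\right)_3$, and the extra factors $\sqrt[3]{2}$ and $\sqrt[3]{2^2}$ all enter the residue computations; getting the precise constant ($2$ versus $1$, and the factor of $3$ in front) right requires care with these $2$-adic and cube-root-of-$2$ terms modulo $3^{1/3}$. In particular I would need the analogue of Theorem \ref{mod3-bsd} for the rank-zero twists by $2^i$ to supply the residues of the auxiliary $L$-values on the left-hand side; establishing those explicit modulo $3$ values for the $2$-power twists, and checking that the cube roots $\sqrt[3]{2}, \sqrt[3]{2^2}$ reduce correctly modulo $3^{1/3}$ (analogous to $\sqrt[3]{p} \equiv 2$), is where the real work lies.
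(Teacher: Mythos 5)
Your strategy coincides with the paper's: average \eqref{coates-formula} over the divisors $d\mid (2p)^2$ inside the ray class field $K(6p)$, evaluate the known terms (the $2$-power twists directly, the $p$-power twists via Theorem \ref{mod3-bsd}, and the two terms killed by the root number), and compare with the trace term, whose dominant contribution is the cardinality $\#(V)=\tfrac{p^2-1}{3}$ after Lemma \ref{key-val} kills the $\wp$-sum. Three bookkeeping corrections: the collapse of $\sum_{d\mid(2p)^2}(\sqrt[3]{d})^{\sigma-1}$ is to $\{0,3^2\}$, not $\{0,3\}$, because it factors as a product of two geometric sums which vanish unless $\sigma$ fixes $\sqrt[3]{2}$ and $\sqrt[3]{p}$ \emph{separately}; accordingly the fixed field is $\CH_p=K(\sqrt[3]{2},\sqrt[3]{p})$ of degree $9$ over $K$, not the degree-$3$ field generated by $\sqrt[3]{2^2p^2}$ (this is the paper's \eqref{2p-f3}, with $3^2\mathrm{Tr}$). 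Also, for $p\equiv 5\bmod 9$ it is $C_{2p^2}$ and $C_{2^2p}$ whose $L$-values vanish, while $C_{2p}$ and $C_{2^2p^2}$ are the rank-zero targets; you have these labels swapped, although your intended logic (everything except the two target terms is known, so their sum is pinned down) is the correct one.

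The genuine gap is in your final step, passing from the membership normalized by $\Omega_1$ to the one normalized by the curves' own periods. The two targets carry \emph{different} period factors: with $t=\sqrt[3]{2p}$, $X_1=L(C_{2p},1)/\Omega_1$ and $X_2=L(C_{2^2p^2},1)/\Omega_1$, the second sum is $tX_1+t^2X_2=t(X_1+X_2)+t(t-1)X_2$. The averaging only gives you $X_1+X_2$, so the cross term $t(t-1)X_2$ is uncontrolled: if $\ord_3(X_2)=0$ it only lies in $3\ov{\BZ}_3$, which destroys the congruence modulo $9$. This is \emph{not} analogous to deducing \eqref{p-f6} from \eqref{p-f5}, where Lemma \ref{p-left-3} already supplied the individual residues; here the individual terms are exactly the unknowns. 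The paper closes this by a second, finer averaging (equations \eqref{k1} through \eqref{div3}): for each fixed $t\in\{2,2^2\}$ one averages only over $d\mid p^2$, writes the resulting identity as a character-twisted trace $\sum_{\sigma\in\Gal(\CH_p/H_p)}\chi_2(\sigma)^i\mse^{\sigma}$ from $\CH_p$ down to $H_p$, and deduces from $\ord_3(3\mse^{\sigma})\geq\tfrac{2}{3}$ that $\ord_3(X_1),\ord_3(X_2)\geq 1$. Only with this individual $3$-divisibility in hand, plus the rationality of $L(C_{2p},1)/\Omega_{2p}$ and $L(C_{2^2p^2},1)/\Omega_{2^2p^2}$, does the second displayed membership follow. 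You need to add this step (or an equivalent source of the individual divisibility) for your proof to be complete.
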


We will only give the details for the case $p\equiv 5\mod 9$, and leave the proof for the other case to the reader. Before the proof, we remark that, by a root number consideration (for details, see \cite{liv95}), we know that $L(C_{2p^2},1)=L(C_{2^2p},1)=0$ (resp.\ $L(C_{2p},1)=L(C_{2^2p^2},1)=0$) when $p\equiv 5\mod 9$ (resp.\ $p\equiv 2\mod 9$).

For simplicity, we write $\gamma$ for $\Omega_{2p}$ in the following. Then we have the following period relation:
\begin{equation}\label{f300}
 \Omega_d=\gamma\sqrt[3]{\frac{2p}{d}}.
\end{equation}
We recall that $\CH_p=K(\sqrt[3]{2},\sqrt[3]{p})$.

\begin{lem}\label{a00}
We have $\sqrt[3]{p},\sqrt[3]{2}\in K(6p)$, and the degree of the field extension $[K(6p):\CH_p]=\frac{p^2-1}{3}$, which is prime to $3$.
\end{lem}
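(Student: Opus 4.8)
The plan is to compute separately the degrees $[K(6p):K]$ and $[\CH_p:K]$, to verify the containment $\CH_p\subseteq K(6p)$, and then to read off $[K(6p):\CH_p]$ as the quotient of the two degrees. Throughout I use that $K=\BQ(\sqrt{-3})$ has class number one and unit group $\CO_K^\times=\mu_6$, so that class field theory identifies $\Gal(K(6p)/K)$ with $(\CO_K/6p\CO_K)^\times/\mu_6$, the image of $\mu_6$ being of order $6$ since no nontrivial sixth root of unity is congruent to $1$ modulo $6p$.

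For the degree of $K(6p)$ over $K$, I would first record the splitting of the rational primes $2,3,p$ in $K$: the prime $2$ is inert (as $x^2+x+1$ is irreducible modulo $2$), $3$ ramifies as $3\CO_K=(\sqrt{-3})^2$, and $p$ is inert because $p\equiv 2\bmod 3$. Writing $6p\CO_K=2\CO_K\cdot(\sqrt{-3})^2\cdot p\CO_K$ and applying the Chinese Remainder Theorem gives $(\CO_K/6p\CO_K)^\times\simeq(\CO_K/2)^\times\times(\CO_K/(\sqrt{-3})^2)^\times\times(\CO_K/p)^\times$, of order $3\cdot 6\cdot(p^2-1)=18(p^2-1)$. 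Dividing by $|\mu_6|=6$ yields $[K(6p):K]=3(p^2-1)$.

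Next I would establish $\CH_p\subseteq K(6p)$ together with $[\CH_p:K]=9$. The containment $\sqrt[3]{p}\in K(3p)\subseteq K(6p)$ has already been recorded earlier in the paper. For $\sqrt[3]{2}$, since $\mu_3\subset K$ the extension $K(\sqrt[3]{2})/K$ is cyclic of degree $3$, ramified only at the primes above $2$ and $3$; at $(2)$ it is tamely ramified with conductor exponent $1$, so it remains to bound by $2$ the conductor exponent at $\fp_3=(\sqrt{-3})$. For this I would pass to the completion $K_{\fp_3}=\BQ_3(\zeta_3)$, write $2=(-1)\cdot(-2)$ with $-1$ a cube and $-2\equiv 1\bmod(\sqrt{-3})^2$ but $-2\not\equiv 1\bmod(\sqrt{-3})^3$, and carry out the standard local computation on the filtration $U^{(i)}=1+(\sqrt{-3})^i$ showing that the cube root of a principal unit lying in $U^{(2)}\setminus U^{(3)}$ generates an extension of conductor exponent $2$. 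Hence the conductor of $K(\sqrt[3]{2})/K$ divides $(6)$, giving $\sqrt[3]{2}\in K(6)\subseteq K(6p)$ and therefore $\CH_p\subseteq K(6p)$. The equality $[\CH_p:K]=9$ then follows from Kummer theory once I observe that $2$ and $p$ are independent in $K^\times/(K^\times)^3$: any relation $2^ap^b\in(K^\times)^3$ forces $3\mid a$ and $3\mid b$ upon taking valuations at the inert primes $(2)$ and $(p)$.

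Combining these facts yields $[K(6p):\CH_p]=[K(6p):K]/[\CH_p:K]=3(p^2-1)/9=(p^2-1)/3$, and the coprimality to $3$ comes from reducing modulo $9$: when $p\equiv 2\bmod 9$ one has $p^2-1\equiv 3\bmod 9$, while when $p\equiv 5\bmod 9$ one has $p^2-1\equiv 6\bmod 9$, so in both cases $3$ exactly divides $p^2-1$ and $(p^2-1)/3$ is prime to $3$. The only delicate point is the local conductor computation at $(\sqrt{-3})$, where the ramification is wild, so the bound on the conductor exponent requires the explicit filtration calculation on the principal units of $\BQ_3(\zeta_3)$ sketched above (alternatively, one may invoke the description of the conductors of the relevant cubic fields in \cite{stephens}); everything else is a routine degree count.
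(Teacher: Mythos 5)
Your proof is correct, and the final degree count is identical to the paper's: both arguments come down to $[K(6p):K]=\#\left((\CO_K/6p\CO_K)^\times/\mu_6\right)=3(p^2-1)$ together with $[\CH_p:K]=9$, and the observation that $3$ exactly divides $p^2-1$ when $p\equiv 2,5\bmod 9$. The genuine difference is in how the containment $\CH_p\subseteq K(6p)$ is established. The paper does this uniformly for $\sqrt[3]{2}$ and $\sqrt[3]{p}$ by writing down the Artin action explicitly: for a prime $\fq=(\alpha)$ with $\alpha\equiv 1\bmod 3$ one has $(\sqrt[3]{p})^{\sigma_\fq}=\left(\frac{p}{\alpha}\right)_3\sqrt[3]{p}$ (and similarly for $\sqrt[3]{2}$), and cubic reciprocity makes these symbols trivial whenever $\alpha\equiv 1\bmod 6p$, so the ray group modulo $(6p)$ fixes both cube roots. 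This is a purely global argument with no ramification analysis. You instead quote $\sqrt[3]{p}\in K(3p)$ from Section 2 and, for $\sqrt[3]{2}$, bound the conductor of $K(\sqrt[3]{2})/K$ by $(6)$ via a wild-ramification computation on the unit filtration of $\BQ_3(\zeta_3)$. Your route is more laborious at the prime $3$ (and is the one place where an error could realistically creep in, though your computation $-2\in U^{(2)}\setminus U^{(3)}$ and the resulting conductor exponent $\leq 2$ are correct), but it yields the slightly sharper statement $\sqrt[3]{2}\in K(6)$, which the paper's argument does not isolate. The paper's method is the more economical one here and generalizes immediately to any radical $\sqrt[3]{m}$ with $m\mid 6p$ without case-by-case local work.
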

\begin{proof}
Since $K$ has class number $1$, given any prime ideal $\fq=\alpha\CO_K$ of $\CO_K$ which is prime to $6p$
and $\alpha\equiv 1\mod 3$, the Artin symbol $\sigma_\fq$ of $\fq$ in $\Gal(K(6p)/K)$ acts on $\sqrt[3]{p}$ via
\[(\sqrt[3]{p})^{\sigma_\fq}=\left(\frac{p}{\alpha}\right)_3\sqrt[3]{p},\]
where $\left(\frac{\cdot}{\cdot}\right)_3$ denotes the cubic residue symbol. The Galois action on $\sqrt[3]{2}$ is given similarly. Thus, for any prime ideal $\fm$ whose generator is congruent to $1$ modulo $6p$,
we know that $\sqrt[3]{p},\sqrt[3]{2}$ are fixed by the Artin symbol $\sigma_\fm$, and the first assertion follows.
For the second assertion, note that $\Gal(K(6p)/K)\simeq (\CO_K/2p\CO_K)^\times$. Hence
$[K(6p): K(\sqrt[3]{p},\sqrt[3]{2})]=\frac{p^2-1}{3}$, which is prime to $3$ by our assumption that $p\equiv 2,5\mod 9$.
\end{proof}

For each $d$ with $0<d\mid (2p)^2$, from \eqref{coates-formula}, and
the period relation \eqref{f300}, we have
$$
  \frac{L^{(6p)}(C_d,1)}{\gamma\sqrt[3]{2p}}=\sum_{\sigma\in \Gal(K(6p)/K)}\frac{1}{6p} (\sqrt[3]{d})^{\sigma-1}\left(\CE^*_1\left(\frac{\gamma\sqrt[3]{2p}}{6p},\gamma\sqrt[3]{2p}\CO_K\right)\right)^\sigma.
$$

Taking the sum over all $0<d\mid (2p)^2$, we obtain the following in the same way we obtained \eqref{p-f3}:

\begin{equation}\label{2p-f3}
  \sum_{d\mid(2p)^2}\frac{L^{(6p)}(C_d,1)}{\gamma\sqrt[3]{2p}}=3^2\mathrm{Tr}_{K(6p)/\CH_p}\left(\frac{1}{6p}\CE^*_1\left(\frac{\gamma\sqrt[3]{2p}}{6p},\gamma\sqrt[3]{2p}\CO_K\right)\right).
\end{equation}

First, we deal with terms on the left hand side of \eqref{2p-f3}. We list the results in the following two lemmas, and leave the proofs to the reader.

\begin{lem}\label{2p-4}
We have
\begin{enumerate}
  \item $\frac{L^{(6p)}(C_1,1)}{\gamma\sqrt[3]{2p}}\in 3(2+3\ov{\BZ}_3)$.
  \item $\frac{L^{(6p)}(C_2,1)}{\gamma\sqrt[3]{2p}}\in 3(\sqrt[3]{2^2}+3\ov{\BZ}_3)$.
  \item $\frac{L^{(6p)}(C_4,1)}{\gamma\sqrt[3]{2p}}\in 3(\frac{1}{\sqrt[3]{2^2}}+3\ov{\BZ}_3)$.
\end{enumerate}
\end{lem}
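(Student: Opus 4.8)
The plan is to evaluate each of the three imprimitive $L$-values $L^{(6p)}(C_d,1)$ for $d=1,2,4$ using formula \eqref{coates-formula} together with the period relation \eqref{f300}, and to track everything modulo $3$ inside $\ov{\BZ}_3$. For each such $d$, the imprimitive $L$-value differs from the primitive one only by the Euler factors at primes dividing $(6p)$; since $d\in\{1,2,4\}$ is prime to $p$, these are the Euler factors at the primes above $2$ and $3$. The first step is therefore to write down, for each $d$, the quantity $\frac{L^{(6p)}(C_d,1)}{\gamma\sqrt[3]{2p}}$ explicitly. Following the derivation that produced \eqref{p-f4} in the previous section, I expect each of these to reduce to a main term of the shape $-\frac{(\text{root})}{p}\,\#(V')$ (coming from the constant $\frac{1}{\sqrt3}$ and the $\overline c$ terms in $\CE^*_1$) plus a sum of the form $\sum \frac{1}{\wp(c\Omega/p)-1}$ that, by Lemma \ref{key-val}, has strictly positive $3$-adic valuation and hence vanishes modulo $3$.

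\textbf{The three cases.} For $d=1$, the curve is $C_1\colon x^3+y^3=1$, whose conductor of $\psi_1$ divides $(6p)$, so $L^{(6p)}(C_1,1)$ is governed by the trace over $\Gal(K(6p)/\BQ(\sqrt[3]{2},\sqrt[3]{p}))$ and the period is $\gamma\sqrt[3]{2p}$. The main term will be $-\frac{\sqrt[3]{2p}}{p}\#(V')$ with $\#(V')=\frac{p^2-1}{3}\cdot(\text{a factor from the index})$; reducing $\#(V')\bmod 3$ and using $\sqrt[3]{2p}\equiv\sqrt[3]{2p}$ together with the congruence $p\equiv 5\bmod 9$ should land the value in $3(2+3\ov{\BZ}_3)$. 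For $d=2$ and $d=4$ the only change is the twist by the cubic residue character $\left(\frac{c}{2}\right)_3$ (resp.\ its square), which multiplies the period by $\sqrt[3]{2}$ (resp.\ $\sqrt[3]{2^2}$) via \eqref{f300}, so that the main terms become $\sqrt[3]{2^2}$ and $\frac{1}{\sqrt[3]{2^2}}$ times a $3$-adic unit, matching assertions (2) and (3). Concretely, I would factor each $L^{(6p)}(C_d,1)/(\gamma\sqrt[3]{2p})$ as (an explicit algebraic root times a counting term), apply \eqref{f200} and Lemma \ref{key-val} to kill the $\wp$-sum modulo a positive power of $3$, and then compute the residue of the surviving main term.

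\textbf{The main obstacle.} The delicate point is keeping the bookkeeping of the factor $\sqrt[3]{2}$ and its powers consistent, since the normalization period is $\gamma=\Omega_{2p}$ rather than $\Omega_p$, so the period relation \eqref{f300} introduces $\sqrt[3]{2p/d}$ rather than $\sqrt[3]{p/d}$. One must verify that the cubic residue character $\left(\frac{\cdot}{2}\right)_3$ combines with the Artin action on $\sqrt[3]{2}\in K(6p)$ (established in Lemma \ref{a00}) so that the trace over $\Gal(K(6p)/\CH_p)$ picks out exactly the residues claimed; in particular one needs $\left(\frac{2}{D}\right)_3$ and the interaction with $\#(V')=\frac{p^2-1}{3}$ to produce the stated $\sqrt[3]{2^2}$ and $\sqrt[3]{2^{-2}}=\sqrt[3]{2}$ factors. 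The rest is the routine $3$-adic valuation estimate already packaged in Lemma \ref{key-val}. Since the statement explicitly defers the proofs to the reader, I would present only the reduction to the main term and the residue computation, noting that the vanishing of the $\wp$-sum modulo a positive power of $3$ is immediate from \eqref{f200} and Lemma \ref{key-val}, exactly as in the passage following \eqref{p-f4}.
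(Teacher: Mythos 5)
The paper states this lemma without proof (``we leave the proofs to the reader''), so there is nothing to compare verbatim; but the route you propose does not reach the required precision, and the intended argument --- visible from the exact formula $\frac{L^{(3p)}(C,1)}{\delta}=\frac{\sqrt[3]{p}(p+1)}{3p}$ recorded in Lemma \ref{p-left-3}(1) --- is different and far more elementary. First note that the three assertions pin each value down modulo $9\ov{\BZ}_3$ (e.g.\ $3(2+3\ov{\BZ}_3)=6+9\ov{\BZ}_3$), not modulo $3$. Your plan is to run the trace formula and discard the $\wp$-sum because it ``has strictly positive $3$-adic valuation and hence vanishes modulo $3$''. This fails on two counts. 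The individual terms $\frac{1}{\wp(c\Omega/2p)-1}$ have valuation $-\frac{1}{3}$ by Lemma \ref{key-val}, not positive valuation; only after grouping into $\mu_3$-orbits does each orbit contribute $\frac{3\wp^2}{\wp^3-1}$, which is a $3$-adic \emph{unit} $\equiv r^2/u_1^3$ up to an error of valuation only $\geq\frac16$. Since for $d\mid 4$ the character $\left(\frac{c}{d}\right)_3$ is trivial on the $p$-part, the sum runs over all of $(\CO_K/2p\CO_K)^\times$, i.e.\ over $p^2-1$ orbits, and the accumulated error is completely uncontrolled at the scale of $9\ov{\BZ}_3$. (Contrast Lemma \ref{2p-phi}, where the target precision is only $3^{5/3}$ and the prefactor $3^2$ in $3^2\Phi$ makes the valuation bound suffice.) A further slip: for an individual $d$, formula \eqref{coates-formula} gives the full trace from $K(6p)$ to $K$ twisted by $(\sqrt[3]{d})^{\sigma-1}$; the trace down to $\CH_p$ only appears after summing over all $d\mid(2p)^2$, as in \eqref{2p-f3}.

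The point you are missing is that $C_1$, $C_2$, $C_4$ are three \emph{fixed} rank-zero curves, independent of $p$, whose algebraic $L$-values are classically known: $L(C_1,1)/\Omega_1=\tfrac13$ and $L(C_2,1)/\Omega_2=\tfrac12$ (see \cite{stephens}; the first is already implicit in Lemma \ref{p-left-3}(1)), and likewise $L(C_4,1)/\Omega_4$ is a rational number congruent to $1$ modulo $3$. The imprimitive $L$-value differs from the primitive one only by the Euler factors at $p$ (all three curves) and at $2$ (only $C_1$, which has good reduction there); since $2$ and $p$ are inert in $K$ we have $a_2=a_p=0$, so removing these factors multiplies by $\tfrac{3}{2}$ and $\tfrac{p+1}{p}$ respectively. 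Hence
\[
\frac{L^{(6p)}(C_1,1)}{\Omega_1}=\frac{p+1}{2p},\qquad \frac{L^{(6p)}(C_2,1)}{\Omega_2}=\frac{p+1}{2p},\qquad \frac{L^{(6p)}(C_4,1)}{\Omega_4}=\frac{L(C_4,1)}{\Omega_4}\cdot\frac{p+1}{p}.
\]
For $p\equiv 5\bmod 9$ one has $p+1=3m$ with $m\equiv 2\bmod 3$ and $2p\equiv 1\bmod 3$, so $\frac{p+1}{2p}\in 3(2+3\BZ_3)$ and $\frac{p+1}{p}\in 3(1+3\BZ_3)$; dividing by $\sqrt[3]{d}$ via $\gamma\sqrt[3]{2p}=\Omega_1=\Omega_d\,\sqrt[3]{d}$ then gives exactly the three displayed memberships. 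No Eisenstein-series computation is needed for this lemma.
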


\bigskip

By Theorem \ref{mod3-bsd}, we have the following estimate.

\begin{lem}\label{2p-p2}
We have
\begin{enumerate}
  \item $\frac{L^{(6p)}(C_p,1)}{\gamma\sqrt[3]{2p}}\in 3\left(\frac{2}{\sqrt[3]{p}}+3\ov{\BZ}_3\right)$.
  \item $\frac{L^{(6p)}(C_{p^2},1)}{\gamma\sqrt[3]{2p}}\in 3\left(\frac{1}{\sqrt[3]{p^2}}+3\ov{\BZ}_3\right)$.
\end{enumerate}
Moreover,
\[\frac{L^{(6p)}(C_p,1)}{\gamma\sqrt[3]{2p}}+\frac{L^{(6p)}(C_{p^2},1)}{\gamma\sqrt[3]{2p}}\in 3\left(\frac{2\sqrt[3]{p}+1}{\sqrt[3]{p^2}}+3\ov{\BZ}_3\right)=3(2+3^{\frac{1}{3}}\ov{\BZ}_3), \]
where we use the fact that $\ord_3(\sqrt[3]{p}-2)=\frac{1}{3}$.
\end{lem}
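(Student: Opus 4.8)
The plan is to reduce each imprimitive value $L^{(6p)}(C_D,1)$, for $D=p$ and $D=p^2$, to the complete $L$-value $L(C_D,1)$, whose class modulo $3$ is already determined by Theorem~\ref{mod3-bsd}, and to locate the extra factor of $3$ in the Euler factor at the prime $(2)$. As recorded in the proof of Proposition~\ref{ste-f1}, the conductor of $\psi_D$ has the same prime divisors as $(3p)$, so $L(C_D,1)=L^{(3p)}(C_D,1)$; hence passing from $L^{(3p)}(C_D,1)$ to $L^{(6p)}(C_D,1)$ removes exactly the Euler factor at the single prime above $2$. Since $2$ is inert in $K=\BQ(\sqrt{-3})$, with $N((2))=4$, this gives
\[
\frac{L^{(6p)}(C_D,1)}{\gamma\sqrt[3]{2p}}=\frac{L(C_D,1)}{\Omega_D}\cdot\frac{\Omega_D}{\gamma\sqrt[3]{2p}}\cdot\left(1-\frac{\psi_D((2))}{4}\right),
\]
and the period relation \eqref{f300} yields $\Omega_p/(\gamma\sqrt[3]{2p})=1/\sqrt[3]{p}$ and $\Omega_{p^2}/(\gamma\sqrt[3]{2p})=1/\sqrt[3]{p^2}$.

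The first concrete step is to evaluate $\psi_D((2))$ using \eqref{hecke-p}. A generator of $(2)$ congruent to $1$ modulo $3$ is $-2$, so $\psi_D((2))=\ov{\left(\frac{D}{-2}\right)}_3(-2)$. The residue field at $(2)$ is $\BF_4$ and $p\equiv 1\bmod 2$, so the cubic residue symbol $\left(\frac{p}{(2)}\right)_3$, being the cube root of unity congruent to $p^{(4-1)/3}=p\equiv 1$ modulo $(2)$, equals $1$; the same holds with $p$ replaced by $p^2$. Therefore $\psi_D((2))=-2$ and the Euler factor is $1-(-2)/4=3/2$ for both $D=p$ and $D=p^2$. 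This is the source of the overall factor $3$, with unit part $1/2\equiv 2\bmod 3$.

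Combining these ingredients with Theorem~\ref{mod3-bsd}(1), which for $p\equiv 5\bmod 9$ gives $\frac{L(C_p,1)}{\Omega_p}\equiv 1$ and $\frac{L(C_{p^2},1)}{\Omega_{p^2}}\equiv 2\bmod 3$, I would divide out the factor $3$ and reduce modulo $3\ov{\BZ}_3$. For $D=p$ this produces $1\cdot\frac{1}{\sqrt[3]{p}}\cdot 2=\frac{2}{\sqrt[3]{p}}$, and for $D=p^2$ it produces $2\cdot\frac{1}{\sqrt[3]{p^2}}\cdot 2=\frac{4}{\sqrt[3]{p^2}}\equiv\frac{1}{\sqrt[3]{p^2}}$, which are exactly assertions (1) and (2). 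For the final claim I would add the two, obtaining a value in $3\left(\frac{2}{\sqrt[3]{p}}+\frac{1}{\sqrt[3]{p^2}}+3\ov{\BZ}_3\right)=3\left(\frac{2\sqrt[3]{p}+1}{\sqrt[3]{p^2}}+3\ov{\BZ}_3\right)$, and then use $\ord_3(\sqrt[3]{p}-2)=\frac{1}{3}$ to replace $\sqrt[3]{p}$ by $2$ up to an error in $3^{1/3}\ov{\BZ}_3$, giving $\frac{2\sqrt[3]{p}+1}{\sqrt[3]{p^2}}\equiv 2\bmod 3^{1/3}$ and hence membership in $3(2+3^{1/3}\ov{\BZ}_3)$, the coarser $3\ov{\BZ}_3$ error being absorbed.

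The computation is essentially routine once it is set up, so I expect no deep obstacle; the main points requiring care are (i) verifying that only the single prime $(2)$ is gained when passing from $L^{(3p)}$ to $L^{(6p)}$, which rests on the conductor of $\psi_D$ having the same prime divisors as $(3p)$, and (ii) the bookkeeping of the two different moduli, $3$ for the individual terms and the finer $3^{1/3}$ forced by the appearance of $\sqrt[3]{p}-2$, when combining the terms in the \emph{Moreover} statement. The cubic residue symbol evaluation at $(2)$ is the one genuinely arithmetic input, but it simplifies immediately because $N((2))-1=3$ and $p$ is odd.
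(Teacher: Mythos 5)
Your proposal is correct and follows exactly the route the paper intends: the paper states this lemma with only the remark ``By Theorem \ref{mod3-bsd}'' and leaves the details to the reader, and those details are precisely your computation — the passage from $L$ to $L^{(6p)}$ costs only the Euler factor at the inert prime $(2)$, which equals $1-\psi_D((2))/4=3/2$ since $\psi_D((2))=\ov{\left(\frac{D}{-2}\right)}_3(-2)=-2$, combined with the period relations $\Omega_p/(\gamma\sqrt[3]{2p})=1/\sqrt[3]{p}$, $\Omega_{p^2}/(\gamma\sqrt[3]{2p})=1/\sqrt[3]{p^2}$ and the congruences of Theorem \ref{mod3-bsd}(1). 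The bookkeeping in the \emph{Moreover} part (adding the two classes and replacing $\sqrt[3]{p}$ by $2$ modulo $3^{1/3}\ov{\BZ}_3$) is also handled correctly.
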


\begin{lem}\label{2p-phi}
Define
\[\Phi:=\mathrm{Tr}_{K(6p)/\CH_p}\left(\frac{1}{6p}\CE^*_1\left(\frac{\gamma\sqrt[3]{2p}}{6p},\gamma\sqrt[3]{2p}\CO_K\right)\right).\]
Then
\[3^2\Phi\in 3(2+3^{\frac{2}{3}}\ov{\BZ}_3).\]
\end{lem}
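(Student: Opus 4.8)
The plan is to compute $3^{2}\Phi$ directly, by the same technique used in Proposition~\ref{ste-f1} and in the computation of $\Psi$ in Section~\ref{section2}, now carried out over the field $\CH_p$. The starting observation is that $\gamma\sqrt[3]{2p}=\Omega_{1}=\Omega/\sqrt{3}$ is the period generator of the base curve $C_{1}$, whose Hecke character satisfies $\psi_{1}((\alpha))=\alpha$ for $\alpha\equiv 1\bmod 3$ prime to $3$. I would choose a set $\mathfrak{C}'$ of representatives of $(\CO_K/2p\CO_K)^{\times}\simeq\Gal(K(6p)/K)$ that is stable under $c\mapsto -c$, parametrised so that the Artin symbol of the ideal $(2p-3c)$ (note $2p-3c\equiv 1\bmod 3$) represents the class attached to $c$. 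By the cubic reciprocity manipulation already used in Proposition~\ref{ste-f1} and Lemma~\ref{a00}, together with $\left(\tfrac{3}{\cdot}\right)_{3}=\left(\tfrac{-3}{\cdot}\right)_{3}=1$, the subgroup $\Gal(K(6p)/\CH_p)$ is represented by the symmetric subset $V'=\{c\in\mathfrak{C}':\left(\tfrac{c}{p}\right)_{3}=\left(\tfrac{c}{2}\right)_{3}=1\}$, of cardinality $\#(V')=[K(6p):\CH_p]=(p^{2}-1)/3$ by Lemma~\ref{a00}.

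With this parametrisation, the explicit Galois action on $\CE^{*}_{1}$ from \cite{coates1} together with the homogeneity $\CE^{*}_{1}(\lambda z,\lambda L)=\lambda^{-1}\CE^{*}_{1}(z,L)$ with $\lambda=\sqrt{3}$ rewrites $\Phi$ as $\frac{\sqrt{3}}{6p}\sum_{c\in V'}\CE^{*}_{1}\bigl(-\tfrac{c\Omega}{2p}+\tfrac{\Omega}{3},L\bigr)$, which is exactly the shape appearing in Proposition~\ref{ste-f1}, but with $p$ replaced by $2p$ in the denominator and without the cubic-symbol weights. I would then expand each term using $\CE^{*}_{1}(z,L)=\zeta(z,L)-z\,s_{2}(L)-\ov{z}A(L)^{-1}$, the addition formula for $\zeta$, and the constants $\wp(\Omega/3)=1$, $\wp'(\Omega/3)=-\sqrt{3}$, $s_{2}(L)=0$, $A(L)=\sqrt{3}\Omega^{2}/(2\pi)$, $\zeta(\Omega/2,L)=\pi/(\sqrt{3}\Omega)$ recorded in Section~\ref{section2}. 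Pairing $c$ with $-c$ (legitimate since $V'$ is symmetric, while $\zeta$, $\wp'$ and $\ov{z}$ are odd and $\wp$ is even) cancels the $\zeta$-term, the $\wp'$-term and the $A(L)^{-1}$-term and collapses the remaining constants, leaving
\[
\Phi=\frac{1}{4p}\sum_{c\in V'}\frac{1}{\wp\!\left(\tfrac{c\Omega}{2p}\right)-1}+\frac{\#(V')}{6p}.
\]

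Finally I would read off the $3$-adic size of $3^{2}\Phi=\frac{9}{4p}\sum_{c\in V'}\frac{1}{\wp(c\Omega/(2p))-1}+\frac{p^{2}-1}{2p}$. By Lemma~\ref{key-val} we have $\ord_{3}\bigl(\wp(c\Omega/(2p))-1\bigr)=\tfrac13$, so each summand of the first term has $\ord_{3}=2-\tfrac13=\tfrac53$, and the first term lies in $3^{5/3}\ov{\BZ}_{3}$. For the second term, $p\equiv 5\bmod 9$ gives $\ord_{3}(p^{2}-1)=1$, and a short congruence computation yields $\frac{p^{2}-1}{2p}\equiv 6\bmod 9$, i.e. $\frac{p^{2}-1}{2p}\in 6+3^{5/3}\ov{\BZ}_{3}$. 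Adding the two contributions gives $3^{2}\Phi\in 6+3^{5/3}\ov{\BZ}_{3}=3(2+3^{2/3}\ov{\BZ}_{3})$, as claimed. The main obstacle is the bookkeeping in the first step: one must fix the representatives $g_{c}=2p-3c$ with the correct normalisation $\equiv 1\bmod 3$ so that the translate in the argument of $\CE^{*}_{1}$ is $+\Omega/3$ rather than $-\Omega/3$; the opposite sign would flip the surviving constant term and produce the residue $1$ in place of $2$, so matching the $+\Omega/3$ convention of Proposition~\ref{ste-f1} is essential. Checking that $V'$ is stable under negation (which rests on $\left(\tfrac{-1}{p}\right)_{3}=\left(\tfrac{-1}{2}\right)_{3}=1$) is the other point to verify carefully before the pairing argument applies.
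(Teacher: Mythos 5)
Your proposal is correct and follows essentially the same route as the paper: the same identification of $\Gal(K(6p)/\CH_p)$ with a negation-stable set $V$ of cubic-residue classes, the same reduction via the Galois action and the addition formula to $\Phi=\frac{1}{4p}\sum_{c\in V}\frac{1}{\wp(c\Omega/(2p))-1}+\frac{\#(V)}{6p}$, and the same final valuation count using Lemma \ref{key-val} and $\#(V)=\frac{p^2-1}{3}$ with $p\equiv 5\bmod 9$. The only differences (the choice of representatives $2p-3c$ versus the paper's $3c+2p$, and the point at which $\sqrt[3]{2p}$ and $\sqrt{3}$ are pulled out) are immaterial given the symmetry of $V$.
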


\begin{proof} Since $\sqrt[3]{2p}\in \CH_p$, we have
\[\Phi=\frac{1}{\sqrt[3]{2p}}\mathrm{Tr}_{K(6p)/\CH_p}\left(\frac{1}{6p}\CE^*_1\left(\frac{\gamma}{6p},\gamma\CO_K\right)\right).\]
We know that $\Gal(K(6p)/K)$ is isomorphic to
\begin{equation}\label{b06}
  \left(\CO_K/6p\CO_K\right)^\times/\mu_6\simeq \left(\CO_K/2p\CO_K\right)^\times,
\end{equation}
where we identify $\mu_6$ with $\left(\CO_K/3\CO_K\right)^\times$, and the isomorphism from right to left is given by sending any element $c$ of $\left(\CO_K/2p\CO_K\right)^\times$ to $3c+2p$. Here, we note that $2p\equiv 1\mod 3$.

For the set of representatives $\{3c+2p :c\in (\CO_K/2p\CO_K)^\times\}$ of \eqref{b06}, we know that the Artin symbol of the ideal $(3c+2p)$ in $\Gal(K(6p)/K)$ fixes $\sqrt[3]{2}$ and $\sqrt[3]{p}$ if and only if
\[\left(\frac{p}{3c+2p}\right)_3=\left(\frac{c}{p}\right)_3=1\]
and
\[\left(\frac{2}{3c+2p}\right)_3=\left(\frac{c}{2}\right)_3=1\]
hold. Therefore, we can identify
\[V:=\left\{c\in(\CO_K/2p\CO_K)^\times: \left(\frac{c}{p}\right)_3=\left(\frac{c}{2}\right)_3=1 \right\}\]
with the Galois group $\Gal(K(6p)/\CH_p)$. Since $\left(\frac{-1}{c}\right)_3=1$, we can choose a set of representatives of $V$ in $\CO_K$ such that $c\in V$ implies $-c\in V$.

Then by the action of Artin symbols on $\CE^*_1\left(\frac{\gamma}{6p},\gamma\CO_K\right)$ (see the proof in \cite[Theorem 60]{coates1}), we have
\[\Phi=\frac{1}{6p\sqrt[3]{2p}} \sum_{c\in V}\CE^*_1\left(\frac{\psi_{2p}((3c+2p))\gamma}{6p},\gamma\CO_K\right).\]
Recalling the definition of $\psi_{2p}$ in \eqref{hecke-p}, it follows that
\[\Phi=\frac{1}{6p\sqrt[3]{2p}}\sum_{c\in V} \CE^*_1\left(\frac{c\gamma}{2p}+\frac{\gamma}{3},\gamma\CO_K\right).\]
Now, noting that $\Omega=\gamma\sqrt{3}\sqrt[3]{2p}$ and $L=\Omega\CO_K$, we obtain
\[\Phi=\frac{1}{2\sqrt{3} p}\sum_{c\in V}\CE^*_1\left(\frac{c\Omega}{2p}+\frac{\Omega}{3},L\right).\]
By a similar method as in the proof of Proposition \ref{ste-f1}, we have
$$
  \Phi=\frac{1}{4p}\sum_{c\in V}\frac{1}{\wp\left(\frac{c\Omega}{2p}\right)-1}+\frac{1}{6p}\#(V).
$$
Furthermore, from Lemma \ref{key-val}, we know
$$\ord_3\left(\wp\left(\frac{c\Omega}{2p}\right)-1\right)=\frac{1}{3}.$$
The lemma now follows on noting that we have $\#(V)=\frac{p^2-1}{3}$ by Lemma \ref{a00} and $p\equiv 5 \bmod 9$. 
\end{proof}

\medskip

By Lemmas \ref{2p-4} and \ref{2p-p2}, we have
\[\msa:=\sum_{d\mid 4}\frac{L^{(6p)}(C_d,1)}{\gamma\sqrt[3]{2p}}\in 3(2+\sqrt[3]{2^2}+\frac{1}{\sqrt[3]{2^2}}+3\ov{\BZ}_3)=3(1+3^{\frac{1}{3}}\ov{\BZ}_3)\]
and
\[\msb:=\sum_{d\neq 1, d\mid p^2}\frac{L^{(6p)}(C_d,1)}{\gamma\cdot\sqrt[3]{2p}}\in 3(2+3^{\frac{1}{3}}\ov{\BZ}_3).\]
Here, we use the fact
\[2+\sqrt[3]{2^2}+(\sqrt[3]{2^2})^{-1}\in 1+3^{\frac{1}{3}}\ov{\BZ}_3.\]

Now, a root number consideration combined with Lemmas \ref{2p-4}, \ref{2p-p2}, \ref{2p-phi} and formula \eqref{2p-f3} gives
\begin{equation}\label{mention}
  \frac{L(C_{2p},1)}{\Omega_1}+\frac{L(C_{2^2p^2},1)}{\Omega_1}=3^2\Phi-\msa-\msb\in 3(2+3^{\frac{1}{3}}\ov{\BZ}_3),
\end{equation}
where we use the fact that $\sqrt[3]{p}\cdot\gamma=\Omega_1$. Thus, the first equation of Theorem \ref{sum-exp-3bsd} (1) holds. The second equation follows on noting that
the algebraic part of the $L$-values of $C_{2p}$ and $C_{2^2p^2}$ are rational numbers and that these algebraic $L$-values are divisible by $3$ in $\ov{\BZ}_3$, which will be shown in \eqref{div3}. This completes the proof of Theorem \ref{sum-exp-3bsd}.

\bigskip

Now, we will show that each $L$-value in the sum in Theorem \ref{sum-exp-3bsd} is non-zero. This is achieved by establishing certain congruence formulas between the algebraic $L$-values of $C_{2p}$ and $C_{2^2p^2}$ (resp.\ $C_{2^2p}$ and $C_{2p^2}$) for $p\equiv 5\mod 9$ (resp.\ $p\equiv 2\mod 9$). The main result of this section is the following.

\begin{thm}\label{mod3-bsd-2p}
\noindent
\begin{enumerate}
  \item Assume $p\equiv 2\mod 9$, then we have
  \[\frac{L(C_{2^2p},1)}{3\Omega_{2^2p}}\equiv \frac{L(C_{2p^2},1)}{3\Omega_{2p^2}}\equiv 1\mod 3.\]
  \item Assume $p\equiv 5\mod 9$, then we have
  \[\frac{L(C_{2p},1)}{3\Omega_{2p}}\equiv \frac{L(C_{2^2p^2},1)}{3\Omega_{2^2p^2}}\equiv 1\mod 3.\]
\end{enumerate}
\end{thm}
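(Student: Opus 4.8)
The plan is to follow the template of Theorem~\ref{mod3-bsd}: Theorem~\ref{sum-exp-3bsd} already controls the \emph{sum} of the two non-vanishing algebraic $L$-values, so all that remains is to separate the two terms. I will treat the case $p\equiv 5\bmod 9$, the case $p\equiv 2$ being identical after interchanging the twists. Dividing the second displayed membership of Theorem~\ref{sum-exp-3bsd}(1) by the Tamagawa factor $3$ and writing
\[a=\frac{L(C_{2p},1)}{3\Omega_{2p}},\qquad b=\frac{L(C_{2^2p^2},1)}{3\Omega_{2^2p^2}},\]
one gets that $a,b$ are rational integers (by the divisibility \eqref{div3}) with $a+b\equiv 2\bmod 3$. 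Since the residues of $a,b$ lie in $\{0,1,2\}$, it therefore suffices to show that \emph{neither $a$ nor $b$ is $\equiv 0\bmod 3$}; equivalently, that each algebraic $L$-value has $3$-adic valuation exactly $1$. Indeed, $a+b\equiv 2$ then forces $(a,b)\equiv(1,1)\bmod 3$, which is the asserted congruence.

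First I would record the modulus-$2p$ analogue of Proposition~\ref{ste-f1}: running the computation of Lemma~\ref{2p-phi} with $g=-6p$ and the identification $\Gal(K(6p)/K)\simeq(\CO_K/2p\CO_K)^\times$ gives, for $D\in\{2p,\,2^2p^2\}$,
\[L(C_D,1)=\frac{-\Omega}{4\sqrt{3}\,p}\sum_{c\in\mathfrak{C}}\left(\frac{c}{D}\right)_3\frac{1}{\wp\!\left(\tfrac{c\Omega}{2p}\right)-1},\]
with $\mathfrak{C}$ a set of representatives of $(\CO_K/2p\CO_K)^\times$ stable under $c\mapsto -c$. The crucial structural point is that $\left(\frac{\omega}{2p}\right)_3=\left(\frac{\omega}{2}\right)_3\left(\frac{\omega}{p}\right)_3=\omega\cdot\omega^2=1$ for $p\equiv 5$ (using Theorem~\ref{zhang1} and the direct evaluation $\left(\frac{\omega}{2}\right)_3=\omega$), so that both characters $\left(\frac{c}{2p}\right)_3$ and $\left(\frac{c}{2^2p^2}\right)_3=\left(\frac{c}{2p}\right)_3^2$ are constant on each orbit $\{c,\omega c,\omega^2 c\}$. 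The identity $\frac{1}{\wp-1}+\frac{1}{\omega\wp-1}+\frac{1}{\omega^2\wp-1}=\frac{3}{\wp^3-1}$ then collapses each orbit, so that
\[\frac{L(C_D,1)}{\Omega_D}=(\text{unit})\cdot\sum_{\mathrm{orbits}}\left(\frac{c}{D}\right)_3\,\frac{3}{\wp\!\left(\tfrac{c\Omega}{2p}\right)^3-1},\]
and by Lemma~\ref{key-val}(2) each factor $\tfrac{3}{\wp^3-1}$ is congruent to the fixed unit $u_1^{-3}$ to leading order. Since $\sum_{c}\left(\frac{c}{D}\right)_3=0$, the leading contribution cancels; this cancellation is exactly the source of the extra factor $3$ (the Tamagawa number at $3$) that distinguishes these curves from $C_p,C_{p^2}$.

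The main obstacle is that, with the leading term gone, the crude estimate $\ord_3\bigl(\tfrac{3}{\wp^3-1}-u_1^{-3}\bigr)\geq\tfrac16$ no longer suffices: because the Tamagawa factor here equals $3$ rather than the unit $2$ occurring in Lemma~\ref{1mod3}, the naive difference-splitting $a-b=(\alpha_1-\alpha_2)B_1+\alpha_2(B_1-B_2)$ (with $B_i$ the character sums above and $\ord_3(\alpha_i)=-1$) becomes circular, the required bound $\ord_3(B_1-B_2)\geq 2$ being \emph{equivalent} to the desired congruence $a\equiv b$. To break this I would compute the next term in the expansion of $\tfrac{3}{\wp^3-1}$ precisely, via Zhang's $3$-adic adaptation \cite{szhang} of Razar's method \cite{razar1,razar2} for the rational functions on $A$ entering \eqref{f200}; this pins down the exact $3$-adic valuation of $\sum_{\mathrm{orbits}}\left(\frac{c}{D}\right)_3\tfrac{3}{\wp^3-1}$ to be $1$, whence $a\not\equiv 0$ and $b\not\equiv 0\bmod 3$. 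The second-order cancellations feeding this computation come from the two symmetries already present: $c\mapsto -c$ (built into $\mathfrak{C}$) and complex conjugation $c\mapsto\bar c$, the latter sending $\left(\frac{c}{D}\right)_3$ to its conjugate and $\wp(\tfrac{c\Omega}{2p})$ to $\overline{\wp(\tfrac{c\Omega}{2p})}$. Once each valuation is fixed at $1$, combining with $a+b\equiv 2\bmod 3$ yields $a\equiv b\equiv 1\bmod 3$, and the identical argument with $D\in\{2^2p,\,2p^2\}$ (where again $\left(\frac{\omega}{2^2p}\right)_3=1$) settles the case $p\equiv 2\bmod 9$.
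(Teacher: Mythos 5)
Your overall reduction is sound: given Theorem~\ref{sum-exp-3bsd}, the congruence $a+b\equiv 2\bmod 3$ for $a=\frac{L(C_{2p},1)}{3\Omega_{2p}}$, $b=\frac{L(C_{2^2p^2},1)}{3\Omega_{2^2p^2}}$ leaves only the residue pairs $(0,2)$, $(1,1)$, $(2,0)$, so it suffices either to exclude the residue $0$ for each term or to show $a\equiv b$. Your orbit computation is also correct: for the modulus $2p$ one has $\left(\frac{\omega}{2p}\right)_3=1$ when $p\equiv 5\bmod 9$, the character is constant on $\omega$-orbits, the partial fractions collapse to $\frac{3}{\wp^3-1}$ (with numerator $3$, not $3\wp^2$ as in Theorem~\ref{zhang1} --- you have this right), and the vanishing of $\sum_c\left(\frac{c}{D}\right)_3$ kills the leading term $u_1^{-3}$ from Lemma~\ref{key-val}, which is indeed the source of the extra Tamagawa factor $3$.

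The gap is in the step you flag as "the main obstacle" and then resolve by assertion. After subtracting the constant $u_1^{-3}$, each summand $\frac{3}{\wp^3-1}-u_1^{-3}$ only satisfies $\ord_3\geq\frac{1}{6}$ by Lemma~\ref{key-val}, and these error terms genuinely depend on $c$; pinning the valuation of the weighted sum down to exactly $1$ is the entire content of the theorem, and "compute the next term in the expansion via Zhang's adaptation of Razar's method, using the symmetries $c\mapsto -c$ and $c\mapsto\bar c$" is a research plan, not an argument --- no mechanism is given for why the second-order terms conspire to give valuation exactly $1$ rather than, say, $\frac{7}{6}$ or more. Moreover, the alternative you dismiss as "circular" is in fact how the paper closes the argument, and it is not circular: one never needs the exact valuation of either term. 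The paper decomposes the trace through the intermediate field $\CH_p=H_p(\sqrt[3]{2})$, obtaining in \eqref{k11} and \eqref{k22} the expressions $\frac{3}{4p}\sum_{\sigma\in\Gal(\CH_p/H_p)}\chi_2(\sigma)\mse^{\sigma}$ and $\frac{3}{4p}\sum_{\sigma}\chi_2(\sigma)^2\mse^{\sigma}$ for the two sums of algebraic $L$-values; subtracting and using $\ord_3(\chi_2(\sigma)-\chi_2(\sigma)^2)\geq\frac{1}{2}>\frac{1}{3}=\ord_3\bigl(\wp\bigl(\tfrac{c\Omega}{2p}\bigr)-1\bigr)$ gives directly that $a-b$ has positive $3$-adic valuation, hence $a\equiv b\bmod 3$ since both are rational, and then $a+b\equiv 2$ forces $a\equiv b\equiv 1$. (The same decomposition is what yields the integrality input \eqref{div3} that you quote.) To repair your write-up you would either have to carry out the exact second-order computation you defer --- which is substantially harder than what the theorem requires --- or replace it with the $\chi_2$-difference argument above.
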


We will give the details of the proof for the case $p\equiv 5\mod9$, and the other case is similar.

\begin{prop}\label{a01}
The element $\sqrt{3}$ is not in the field $K(6p)$.
\end{prop}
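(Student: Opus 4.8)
The plan is to show $\sqrt{3}\notin K(6p)$ by a ramification argument at a prime above $3$. The key observation is that $K=\BQ(\sqrt{-3})$ already contains $\sqrt{-3}$, so adjoining $\sqrt{3}$ is the same as adjoining $\sqrt{-1}=i$; thus $K(\sqrt{3})=K(i)=\BQ(\zeta_{12})$, and the extension $K(\sqrt{3})/K$ is ramified only at the primes above $2$ (the prime $3$ is already ramified in $K/\BQ$, and over $K$ the extension $K(i)/K$ picks up ramification at $2$, not at $3$). I would therefore first identify $K(\sqrt{3})$ explicitly and record its ramification.

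First I would recall that $K(6p)$ is the ray class field of $K$ modulo $(6p)=(2)(3)(p)\CO_K$ (up to units), and that by class field theory the conductor of $K(6p)/K$ divides $(6p)$, so the only primes of $K$ that can ramify in $K(6p)/K$ are those dividing $6p$, namely the primes above $2$, $3$, and $p$. To contradict $\sqrt{3}\in K(6p)$ I would exhibit a prime that ramifies in $K(\sqrt{3})/K$ but is forced to be unramified in $K(6p)/K$. The natural candidate is the prime $\fl=(\sqrt{-3})$ above $3$: I would compute the ramification of $K(\sqrt{3})/K$ at $\fl$ directly. Since $\sqrt{3}=\sqrt{-1}\cdot\sqrt{-3}$ and $\sqrt{-3}\in K$, we have $K(\sqrt{3})=K(\sqrt{-1})$, and $\sqrt{-1}$ generates an extension unramified at $\fl$ (as $3$ is odd) but ramified at the prime above $2$.

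The cleanest route is then to pin down the ramification index of $K(6p)/K$ at the prime above $2$. Here I would use the structure of ray class fields over $K$: since $K$ has class number one and unit group $\mu_6$, one has $\Gal(K(6p)/K)\cong(\CO_K/6p\CO_K)^\times/\mu_6$, and the inertia subgroup at a prime $\fq\mid 2$ is the image of $(\CO_K/\fq^{a})^\times$ where $\fq^a$ is the exact power of $\fq$ dividing the conductor. The prime $2$ is inert in $K$ (since $-3$ is not a square mod $8$... more precisely $2$ is inert in $\BQ(\sqrt{-3})$), so $(2)=\fq$ with residue field $\BF_4$, and $(\CO_K/2\CO_K)^\times\cong\BF_4^\times$ has order $3$, which is coprime to $2$. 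Hence the ramification index of $K(6p)/K$ at the prime above $2$ is odd (a divisor of $3$), whereas $K(\sqrt{3})=K(i)$ is ramified with even ramification index $2$ at the prime above $2$. This incompatibility shows $\sqrt{3}\notin K(6p)$.

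The main obstacle I anticipate is getting the ramification bookkeeping at $2$ exactly right: one must verify that $2$ is inert in $K$, compute the contribution of the prime above $2$ to the conductor of $K(6p)/K$ (which controls the inertia group), and confirm that the resulting inertia index is coprime to $2$, so that no quadratic ramification at $2$ can occur inside $K(6p)$. An alternative, possibly cleaner, argument avoids $2$ entirely: since $K(\sqrt{3})/\BQ$ is unramified outside $\{2,3\}$ while the relevant discriminant shows $\sqrt 3$ would force ramification data incompatible with the abelian extension $K(6p)/K$ whose inertia at each prime is explicitly a quotient of $(\CO_K/\fq^{a})^\times$. Either way, the heart of the proof is a local ramification comparison at a single well-chosen prime, and I would present the version using the prime above $2$ since the arithmetic of $(\CO_K/2\CO_K)^\times\cong\BF_4^\times$ makes the parity obstruction most transparent.
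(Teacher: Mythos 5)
Your argument is correct, and it reaches the same local obstruction at the prime $2$ as the paper, but by a different computation. The paper's proof is global-to-local: it computes the discriminant of the quartic field $\BQ(\sqrt{3},\sqrt{-3})$ (as the splitting field of $x^4+8x^2+4$, with discriminant $2^43^2$), feeds this into the tower formula $\fd_{L/\BQ}=\fd_{K/\BQ}^{[K:\BQ]}\cdot N_{K/\BQ}(\fd_{L/K})$ to conclude that $K(\sqrt{3})/K$ has conductor $(4)=(2)^2$, and then observes that this cannot divide the modulus $(6p)$, in which $(2)$ appears only to the first power. You instead work directly with inertia: since $2$ is inert in $K$ (as $-3\equiv 5\bmod 8$) and $(2)$ exactly divides the modulus $(6p)$, the inertia subgroup of $(2)$ in $\Gal(K(6p)/K)$ is a quotient of $(\CO_K/2\CO_K)^\times\cong\BF_4^\times$, of order dividing $3$; but $K(\sqrt{3})=K(i)=\BQ(\zeta_{12})$ is ramified over $K$ at $(2)$ with ramification index $2$, which is incompatible. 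The two proofs encode the same fact (the conductor of $K(i)/K$ at $2$ is $(2)^2$, while $K(6p)$ only tolerates $(2)^1$), but yours trades the quartic discriminant computation for the purely group-theoretic parity observation that $\#\BF_4^\times=3$ is odd, which is arguably more transparent. Two small points of care: the inertia subgroup at $\fq$ should be described as the image of $(\CO_K/\fq^a)^\times$ where $\fq^a$ exactly divides the \emph{modulus} $(6p)$ (not the conductor, which could a priori be smaller — the bound you need still holds); and your closing suggestion of an alternative argument ``avoiding $2$ entirely'' would not work, since $K(\sqrt{3})/K$ is unramified outside $2$ (the ramification at $3$ is already absorbed by $K/\BQ$), so the prime $2$ is the only place where an obstruction can be found — but your main line correctly commits to that prime.
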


\begin{proof}
For the field extensions $\BQ\subset K\subset L$, we have the following conductor-like formula:
\[\fd_{L/\BQ}=\fd^{[K:\BQ]}_{K/\BQ}\cdot N_{K/\BQ}(\fd_{L/K}),\]
where $\fd_{M/J}$ denotes the discriminant ideal of $\CO_J$ for an extension $M/J$ and $\CO_J$ denotes the ring of integer of $J$. Note that $\BQ(\sqrt{3},\sqrt{-3})$ is a splitting field of $x^4+8x^2+4$ over $\BQ$ with discriminant $2^4 3^2$. Thus $K(\sqrt{3})/K$ has conductor $(4)$, and $\sqrt{3}$ cannot be in $K(6p)$.
\end{proof}

\begin{prop}\label{choose-3}
We can choose a set of representatives $\CC$ of $(\CO_K/2p\CO_K)^\times$ in $\CO_K$ such that $c\in\CC$ whenever $-c\in \CC$, and the Artin symbol of $c$ in $\Gal(K(6p)(\sqrt{3})/K)$ fixes $\sqrt{3}$.
\end{prop}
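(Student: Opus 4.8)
The plan is to reformulate the statement in terms of the quadratic character cutting out $K(\sqrt3)$. Let $\chi\colon\Gal(K(6p)(\sqrt3)/K)\to\{\pm1\}$ be the character recording the action on $\sqrt3$, so that an automorphism fixes $\sqrt3$ exactly when $\chi$ sends it to $1$. By Proposition \ref{a01} we have $\sqrt3\notin K(6p)$, so $\chi$ is the inflation of the nontrivial quadratic character of $\Gal(K(\sqrt3)/K)$. Following the identification in Lemma \ref{2p-phi}, I attach to a representative $c\in\CO_K$ of a class in $(\CO_K/2p\CO_K)^\times$ the ideal $(3c+2p)$, whose Artin symbol realises the corresponding element of $\Gal(K(6p)/K)$; since $3c+2p$ is prime to $6$ and to $p$, the value $\chi((3c+2p))$ is defined and equals $1$ precisely when the symbol of $(3c+2p)$ fixes $\sqrt3$. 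Thus it suffices to choose the representatives, symmetrically under $c\mapsto -c$, so that $\chi((3c+2p))=1$.

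First I would isolate the available freedom. As the conductor of $K(\sqrt3)/K$ is $(4)$ by Proposition \ref{a01}, $\chi$ factors through $\Gal(K(4)/K)\cong(\CO_K/4\CO_K)^\times/\mu_6$, so $\chi((3c+2p))$ depends only on $3c+2p$ modulo $4$ (up to the units $\mu_6$). A representative of a fixed class in $(\CO_K/2p\CO_K)^\times$ is pinned down only modulo $2p$; replacing $c$ by $c+2pt$ leaves $c$ unchanged modulo $2$ but lets $c\bmod 4$ range over the whole fibre of $\CO_K/4\CO_K\to\CO_K/2\CO_K$ above $c\bmod 2$. Consequently $3c+2p\bmod 4$ ranges over a full coset of the subgroup $U_1:=\ker\!\left((\CO_K/4\CO_K)^\times\to(\CO_K/2\CO_K)^\times\right)$. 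The main point is that $\chi$ is nontrivial on $U_1$: were it trivial there, $\chi$ would factor through the ray class group modulo $(2)$, forcing $K(\sqrt3)\subseteq K(2)$ and contradicting the conductor being exactly $(4)$. Hence $\chi$ assumes both values on the coset, and in particular I may select the representative $c$ with $\chi((3c+2p))=1$. I expect this interaction at the prime $2$, shared between the modulus $2p$ and the conductor $4$, to be the only genuine obstacle, and it is resolved precisely by the conductor computation in Proposition \ref{a01}.

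It remains to secure the symmetry $c\in\CC\iff-c\in\CC$. The classes of $(\CO_K/2p\CO_K)^\times$ fall into pairs $\{[c],[-c]\}$ with $[c]\neq[-c]$ (since $2c$ is a unit). For each pair I would pick $c$ as above with $\chi((3c+2p))=1$ and take $-c$ as representative of $[-c]$; I then need $\chi((3(-c)+2p))=1$ as well. This is automatic: writing $3(-c)+2p=-(3c-2p)$ and using $\chi(-1)=1$ (as $-1\in\mu_6$) gives $\chi((3(-c)+2p))=\chi((3c-2p))$, while $(3c+2p)-(3c-2p)=4p\in4\CO_K$ shows $3c-2p\equiv 3c+2p\pmod 4$, whence $\chi((3c-2p))=\chi((3c+2p))=1$. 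Collecting one such pair of representatives for each pair of classes yields the required set $\CC$, symmetric under negation and with every Artin symbol fixing $\sqrt3$.
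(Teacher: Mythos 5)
Your proof is correct, but it takes a genuinely different route from the paper's. The paper argues top-down: from Proposition \ref{a01} it deduces that $K(\sqrt{3})$ and $K(6p)$ are linearly disjoint over $K$, so $\Gal(K(6p)(\sqrt{3})/K)$ splits as a product, and it then invokes the Chebotarev density theorem to produce, for each $\tau_1\in\Gal(K(6p)/K)$, an element $c_1$ whose Artin symbol restricts to $\tau_1$ on $K(6p)$ and to the identity on $K(\sqrt{3})$; the symmetry under $c\mapsto -c$ is then handled by observing that the action on $\sqrt{3}$ factors through the norm from $K$ to $\BQ$, which does not see the sign. You instead argue bottom-up and constructively: you use the finer information from the proof of Proposition \ref{a01} (conductor of $K(\sqrt{3})/K$ exactly $(4)$) to show that the ambiguity $c\mapsto c+2pt$ in the representative — which leaves the class in $(\CO_K/2p\CO_K)^\times$, hence the restriction to $K(6p)$, untouched — sweeps $3c+2p$ through a full coset of $U_1=\ker\bigl((\CO_K/4\CO_K)^\times\to(\CO_K/2\CO_K)^\times\bigr)$, on which $\chi$ must be nontrivial precisely because the conductor does not divide $(2)$; and you replace the norm argument for $-c$ by the congruence $3c-2p\equiv 3c+2p\bmod 4$. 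Your approach avoids Chebotarev entirely and makes the choice of $\CC$ effective, at the cost of a slightly more delicate local computation at $2$; the paper's approach is shorter and only needs the qualitative statement $\sqrt{3}\notin K(6p)$. (One cosmetic quibble: $[c]\neq[-c]$ holds because $2c\not\equiv 0\bmod 2p\CO_K$, equivalently $p\nmid c$ — the element $2c$ is of course not a unit modulo $2p$ — but the claim itself is true and the rest of your argument is unaffected.)
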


\begin{proof}
By Proposition \ref{a01}, we know the fields $K(\sqrt{3})$ and $K(6p)$ are linearly disjoint over $K$. Thus, given an element $\tau_1\in \Gal(K(6p)/K)$ and the identity element $\iota \in \Gal(K(\sqrt{3})/K)$, the Chebotarev density theorem gives an element $c_1\in \CO_K$ prime to $6p$ whose Artin symbol $\sigma_{c_1}$ in $\Gal(K(6p)(\sqrt{3})/K)$ satisfies $\sigma_{c_1}\mid_{K(6p)}=\tau_1$ and $\sigma_{c_1}\mid_{K(\sqrt{3})}=\iota$.

Now, viewing $c_1\in (\CO_K/2p\CO_K)^\times$ via the isomorphism in \eqref{b06}, we know that $-c_1$ gives another element
in $\Gal(K(6p)(\sqrt{3})/K)$ whose restriction to $K(\sqrt{3})$ is $\iota$, since by class field theory the Artin symbol of $-c_1$ acts on $\sqrt{3}$ via the norm map from $K$ to $\BQ$, so must act trivially on $\sqrt{3}$ (since $c_1$ acts trivially on $\sqrt{3}$). In this way, we can choose the set $\CC$, and the lemma follows.
\end{proof}

From now on, we fix the above choice of the representatives $\CC$. We denote by $V_p$ the Galois group $\Gal(K(6p)/H_p)$. We view $V_p$ as the subset
\[V_p=\{c\in \CC: \left(\frac{c}{p}\right)_3=1\}\]
of $\CC$. Let $V_{2p}$ the subset of $\CC$ giving a set of representatives of the Galois group $\Gal(K(6p)/\CH_p)$.

In the following argument, we will study the partial summation from Zhao's method. For simplicity, we write $t$ for $2$ or $2^2$, and we will study the relations between the $L$-values of $C_{t}$, $C_{tp}$ and $C_{tp^2}$.
We need to use the key result in Theorem \ref{mod3-bsd} on the explicit modulo $3$ Birch--Swinnerton-Dyer conjecture for $C_{p}$ and $C_{p^2}$.

Given any $0<d\mid(p)^2$, we denote by $\eta_t$ the period of $C_{tp}$ for simplicity. Then we have the period relation
\begin{equation}\label{sp-f0}
  \Omega_{td}=\eta_t\cdot\sqrt[3]{\frac{p}{d}}.
\end{equation}

For any such $d$, using
the above period relation \eqref{sp-f0} and formula \eqref{coates-formula}, we have
\begin{equation}\label{sp-f2}
  \frac{L^{(6p)}(C_{td},1)}{\eta_t\sqrt[3]{p}}=\frac{1}{6p}\sum_{\sigma\in \Gal(K(6p)/K)}(\sqrt[3]{d})^{\sigma-1}\CE^*_1\left(\frac{\eta_t
  \sqrt[3]{p}}{6p},\eta_t\sqrt[3]{p}\CO_K\right)^\sigma.
\end{equation}
Now, taking the sum over all $d$ dividing $p^2$ in \eqref{sp-f2}, we obtain
$$
  \sum_{d\mid p^2}\frac{L^{(6p)}(C_{td},1)}{\eta_t\cdot\sqrt[3]{p}}=
  \frac{3}{6p}\mathrm{Tr}_{K(6p)/H_p}\left(\CE^*_1\left(\frac{\eta_t\sqrt[3]{p}}{6p},\eta_t\sqrt[3]{p}\CO_K\right)\right) \qquad t=2,2^2.
$$

\medskip

First, we consider the case $t=2$.

\begin{lem}\label{sp-f6}
We have
\[\frac{L^{(6p)}(C_2,1)}{\eta_t\sqrt[3]{p}}\in 3(2+3\ov{\BZ}_3).\]
\end{lem}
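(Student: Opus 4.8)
The plan is to pass from the imprimitive value to the full complex $L$-series of the fixed curve $C_2$, strip off a single Euler factor at $p$, and then feed in the algebraic $L$-value of $C_2$ modulo $3$. First I would use the period relation \eqref{sp-f0} with $t=2$ and $d=1$, namely $\eta_2\sqrt[3]{p}=\Omega_2$, to rewrite the quantity in question as $\frac{L^{(6p)}(C_2,1)}{\Omega_2}$. Since $C_2$ is independent of $p$, this reduces the lemma to understanding one fixed algebraic number together with an explicit Euler factor depending on $p$.

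Next I would compare $L^{(6p)}(C_2,1)$ with $L(C_2,1)$. By \cite{stephens} the conductor of $\psi_2$ has the same prime divisors as $(6)$, so removing the Euler factors at the primes dividing $(6p)$ discards, beyond those already absent from $L(C_2,s)$, only the factor at the prime above $p$. As $p\equiv 2\bmod 3$, the prime $p$ is inert in $K$, giving a single prime $(p)$ of norm $p^2$, so that
\[
L^{(6p)}(C_2,1)=L(C_2,1)\left(1-\psi_2((p))\,p^{-2}\right).
\]
To evaluate $\psi_2((p))$ I would take the generator $-p\equiv 1\bmod 3$ of $(p)$ and apply \eqref{hecke-p}, obtaining $\psi_2((p))=\ov{\left(\frac{2}{-p}\right)}_3(-p)$. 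The point is that $p\equiv 2\bmod 3$ makes the cubing map a bijection on $\BF_p^\times$, so every rational integer prime to $p$ is a cube modulo $p$; hence $\left(\frac{2}{-p}\right)_3=1$, $\psi_2((p))=-p$, and the Euler factor equals $1+p^{-1}=\frac{p+1}{p}$.

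Combining the two ingredients gives $\frac{L^{(6p)}(C_2,1)}{\Omega_2}=\frac{L(C_2,1)}{\Omega_2}\cdot\frac{p+1}{p}$. Since $p\equiv 5\bmod 9$ we have $\ord_3(p+1)=1$ and $\frac{(p+1)/3}{p}\equiv 1\bmod 3$, so the whole expression lies in $3(r+3\ov{\BZ}_3)$, where $r\in\{1,2\}$ is the residue of $\frac{L(C_2,1)}{\Omega_2}$ modulo $3$. Thus the lemma reduces to the single congruence $\frac{L(C_2,1)}{\Omega_2}\equiv 2\bmod 3$ (in particular, that this algebraic $L$-value is a $3$-adic unit).

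This last congruence is the only genuinely arithmetic input and is the main obstacle. It is the explicit modulo $3$ value of the algebraic $L$-value of the fixed rank-zero curve $C_2$, whose rational point $(1,1)$ is the $2$-torsion point corresponding to $(12,0)$ on the model $y^2=x^3-1728$. I would establish it by evaluating the special value formula \eqref{coates-formula} for $C_2$ at a generator of the conductor of $\psi_2$ — a trace over a small ray class group — and reducing modulo $3$ with the $\wp$-estimates of Lemma \ref{key-val}, in the same style as the proof of Proposition \ref{ste-f1}; alternatively it can be read off from Stephens' computations in \cite{stephens}. Everything else is routine bookkeeping with Euler factors and cube roots, parallel to the $d=1$ computations for $C_1$ appearing elsewhere in this section (and, as a consistency check, multiplying the present estimate by $\sqrt[3]{2}$ recovers the value recorded for this term against the period $\Omega_1$).
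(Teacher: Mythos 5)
Your proposal is correct and is essentially the argument the paper intends: the proof of this lemma is left to the reader, but your route --- the period relation \eqref{sp-f0} giving $\eta_2\sqrt[3]{p}=\Omega_2$, stripping the Euler factor $1-\psi_2((p))p^{-2}=(p+1)/p$ at the inert prime $p$, and feeding in $L(C_2,1)/\Omega_2\equiv 2\bmod 3$ together with $\ord_3(p+1)=1$ for $p\equiv 5\bmod 9$ --- is exactly parallel to the paper's treatment of the $d=1$ term in Lemma \ref{p-left-3}, where $L^{(3p)}(C_1,1)/\Omega_1=\frac{1}{3}\cdot\frac{p+1}{p}$. The one input you leave partially open, the congruence $L(C_2,1)/\Omega_2\equiv 2\bmod 3$, is indeed classical: the value is exactly $1/2$ (Stephens), and your proposed derivation via \eqref{coates-formula} over the degree-$3$ extension $K(6)/K$ combined with Lemma \ref{key-val} (applied with $\Delta=2$, noting $\wp(\Omega/2)^3=1/4$) does go through.
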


\begin{lem}\label{sp-f7}
We have
\begin{small}
\[\frac{3}{6p}\mathrm{Tr}_{K(6p)/H_p}\left(\CE^*_1\left(\frac{\eta_t\sqrt[3]{p}}{6p},\eta_t\sqrt[3]{p}\CO_K\right)\right)
=\frac{3}{4p}\mathrm{Tr}_{\CH_p/H_p}\left(\sqrt[3]{2}\sum_{c\in V_{2p}}\frac{1}{\wp\left(\frac{c\Omega}{2p}\right)-1}\right).\]
\end{small}
\end{lem}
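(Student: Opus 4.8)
The plan is to factor the outer trace through the intermediate field $\CH_p$ and reduce the inner trace to the computation already carried out for $\Phi$ in Lemma \ref{2p-phi}. Since $t=2$ here, the relevant period is $\eta_2=\gamma=\Omega_{2p}$. Using the tower $H_p\subset\CH_p\subset K(6p)$ (valid by Lemma \ref{a00}), I would first write $\mathrm{Tr}_{K(6p)/H_p}=\mathrm{Tr}_{\CH_p/H_p}\circ\mathrm{Tr}_{K(6p)/\CH_p}$. Because $\sqrt[3]{p}\in H_p$ is fixed by $\Gal(K(6p)/\CH_p)$, the homogeneity relation $\CE^*_1(\lambda z,\lambda L)=\lambda^{-1}\CE^*_1(z,L)$ lets me pull $\sqrt[3]{p}$ out of the lattice, giving $\CE^*_1\bigl(\tfrac{\gamma\sqrt[3]{p}}{6p},\gamma\sqrt[3]{p}\CO_K\bigr)=\tfrac{1}{\sqrt[3]{p}}\CE^*_1\bigl(\tfrac{\gamma}{6p},\gamma\CO_K\bigr)$, so that the inner trace becomes $\tfrac{1}{\sqrt[3]{p}}\mathrm{Tr}_{K(6p)/\CH_p}\bigl(\CE^*_1(\tfrac{\gamma}{6p},\gamma\CO_K)\bigr)$.

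The point is that $\mathrm{Tr}_{K(6p)/\CH_p}(\CE^*_1(\tfrac{\gamma}{6p},\gamma\CO_K))$ is precisely $6p\sqrt[3]{2p}\,\Phi$ in the notation of Lemma \ref{2p-phi}: the proof there evaluates exactly this trace over the representative set $V_{2p}$ via the Galois action through $\psi_{2p}$. Substituting the closed form $\Phi=\tfrac{1}{4p}\sum_{c\in V_{2p}}\tfrac{1}{\wp(c\Omega/2p)-1}+\tfrac{1}{6p}\#(V_{2p})$ obtained there, and using $\sqrt[3]{2p}/\sqrt[3]{p}=\sqrt[3]{2}$, I find that the inner trace equals $\tfrac{3}{2}\sqrt[3]{2}\sum_{c\in V_{2p}}\tfrac{1}{\wp(c\Omega/2p)-1}+\sqrt[3]{2}\,\#(V_{2p})$.

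It then remains to apply $\mathrm{Tr}_{\CH_p/H_p}$. The $\wp$-sum over $V_{2p}$ and the element $\sqrt[3]{2}$ lie in $\CH_p$, while $\#(V_{2p})$ and the rational constants lie in $H_p$, so they pass through the trace as scalars; the only subtlety is the constant term. Since the three $H_p$-conjugates of $\sqrt[3]{2}$ are $\sqrt[3]{2},\omega\sqrt[3]{2},\omega^2\sqrt[3]{2}$, we have $\mathrm{Tr}_{\CH_p/H_p}(\sqrt[3]{2})=0$, so the $\#(V_{2p})$-term vanishes. This leaves $\mathrm{Tr}_{K(6p)/H_p}(\CE^*_1(\tfrac{\gamma\sqrt[3]{p}}{6p},\gamma\sqrt[3]{p}\CO_K))=\tfrac{3}{2}\mathrm{Tr}_{\CH_p/H_p}\bigl(\sqrt[3]{2}\sum_{c\in V_{2p}}\tfrac{1}{\wp(c\Omega/2p)-1}\bigr)$, and multiplying by $\tfrac{3}{6p}$ produces exactly the factor $\tfrac{3}{4p}$ of the claimed identity, since $\tfrac{3}{6p}\cdot\tfrac{3}{2}=\tfrac{3}{4p}$.

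The main obstacle is the bookkeeping that identifies the inner trace with the earlier $\Phi$ computation: one must check that pulling $\sqrt[3]{p}$ out of the lattice lands on precisely the lattice $\gamma\CO_K$ together with the same representative set $V_{2p}$, so that Lemma \ref{2p-phi} applies verbatim rather than merely by analogy. The conceptually essential observation is the vanishing $\mathrm{Tr}_{\CH_p/H_p}(\sqrt[3]{2})=0$, which is exactly what kills the $\#(V_{2p})$ constant term appearing in $\Phi$ and thereby explains why the right-hand side of the lemma contains only the $\wp$-sum and no additive constant.
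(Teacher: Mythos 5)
Your proof is correct, and its skeleton is the same as the paper's: factor the trace as $\mathrm{Tr}_{\CH_p/H_p}\circ\mathrm{Tr}_{K(6p)/\CH_p}$, reduce the inner trace to the sum $\sum_{c\in V_{2p}}\bigl(\wp(c\Omega/2p)-1\bigr)^{-1}$ plus a constant multiple of $\#(V_{2p})$, and kill the constant term using $\mathrm{Tr}_{\CH_p/H_p}(\sqrt[3]{2})=0$. The one genuine difference is organizational but worth noting: the paper re-derives the inner trace from scratch by rescaling the lattice $\eta_t\sqrt[3]{p}\,\CO_K$ to $\Omega\CO_K$ \emph{before} unwinding the Galois action, which forces it to invoke the period relation $\eta_t\sqrt[3]{2p}\sqrt{3}=\Omega$ together with Proposition \ref{choose-3} (the representatives $\CC$ fixing $\sqrt{3}$, since $\sqrt{3}\notin K(6p)$) and then to repeat the Proposition \ref{ste-f1}-style Eisenstein-series computation. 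You instead pull out only $\sqrt[3]{p}$ (which is legitimately a scalar for $\Gal(K(6p)/\CH_p)$ since $\sqrt[3]{p}\in H_p\subset\CH_p$) and observe that what remains, $\mathrm{Tr}_{K(6p)/\CH_p}\bigl(\CE^*_1(\gamma/6p,\gamma\CO_K)\bigr)=6p\sqrt[3]{2p}\,\Phi$, is literally the quantity already evaluated in Lemma \ref{2p-phi}; there the $\sqrt{3}$-rescaling happens only after the trace has become an explicit finite sum of complex numbers, so no choice of $\sqrt{3}$-fixing representatives is needed. This buys a shorter argument and isolates the two real inputs — the verbatim reuse of $\Phi$ (which you rightly flag as the point requiring care, and which does check out since the inner lattice and the representative set $V_{2p}$ coincide exactly) and the vanishing of $\mathrm{Tr}_{\CH_p/H_p}(\sqrt[3]{2})$ — at the cost of making the constant $\tfrac{3}{4p}$ emerge from the bookkeeping $\tfrac{3}{6p}\cdot 6p\sqrt[3]{2}\cdot\tfrac{1}{4p}\cdot\tfrac{1}{\sqrt[3]{2}}\cdot$(trace) rather than directly. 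One small imprecision: the homogeneity identity $\CE^*_1(\lambda z,\lambda L)=\lambda^{-1}\CE^*_1(z,L)$ holds unconditionally as an identity of complex numbers; the fact that $\sqrt[3]{p}$ is fixed by $\Gal(K(6p)/\CH_p)$ is needed only for the subsequent step of moving $1/\sqrt[3]{p}$ outside the trace, not for the rescaling itself.
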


\begin{proof}
Recall that we use the set $\CC$ of representatives of $\Gal(K(6p)/K)$ chosen in Proposition \ref{choose-3}. We take the trace of $\CE^*_1\left(\frac{\eta_t\sqrt[3]{p}}{6p},\eta_t\sqrt[3]{p}\CO_K\right)$ from $K(6p)$ to $K$ in the following two steps: first from $K(6p)$ to $\CH_p$, then from $\CH_p$ to $H_p$. Noting the Galois action on $\CE^*_1\left(\frac{\eta_t\sqrt[3]{p}}{6p},\eta_t\sqrt[3]{p}\CO_K\right)$ as described in \cite{coates1}, the period relation
\[\eta_t\cdot\sqrt[3]{2p}\cdot\sqrt{3}=\Omega\]
and that $\sqrt{3}$ is fixed by $\CC$, we obtain the following equality after taking the trace map from $K(6p)$ to $\CH_p$:
\begin{footnotesize}
\begin{equation}\label{000001}
 \frac{3}{6p}\mathrm{Tr}_{K(6p)/H_p}\left(\CE^*_1\left(\frac{\eta_t\sqrt[3]{p}}{6p},\eta_t\sqrt[3]{p}\CO_K\right)\right)=\frac{\sqrt{3}}{2p} \mathrm{Tr}_{\CH_p/H_p}\left(\sqrt[3]{2}\sum_{c\in V_{2p}}\CE_1^*\left(\frac{c\Omega}{2p}+\frac{\Omega}{3},\Omega\right)\right).
\end{equation}
\end{footnotesize}
By a similar calculation as in the proof of the Proposition \ref{ste-f1}, we get
\begin{equation}\label{000002}
  \sum_{c\in V_{2p}}\CE_1^*\left(\frac{c\Omega}{2p}+\frac{\Omega}{3},\Omega\right)=\frac{\sqrt{3}}{2}\sum_{c\in V_{2p}}\frac{1}{\wp\left(\frac{c\Omega}{2p}-1\right)}+\frac{\#(V_{2p})}{\sqrt{3}}.
\end{equation}
Note that $\CH_p=H_p(\sqrt[3]{2})$ and $\sqrt{3}$ is fixed by $\CC$. Thus applying \eqref{000002} to \eqref{000001} gives the lemma.
\end{proof}

\medskip

Next, we consider the case of $t=2^2$. Similar to the case of $t=2$, we have
\[\frac{L^{(6p)}(C_{2^2},1)}{\eta_t\sqrt[3]{p}}\in 3(1+3\ov{\BZ}_3)\]
and
\begin{small}
$$
 \frac{3}{6p}\mathrm{Tr}_{K(6p)/H_p}\left(\CE^*_1\left(\frac{\eta_t\sqrt[3]{p}}{6p},\eta_t\sqrt[3]{p}\CO_K\right)\right)
 =\frac{3}{4p}\mathrm{Tr}_{\CH_p/H_p}\left(\sqrt[3]{2^2}\sum_{c\in V_{2p}}\frac{1}{\wp\left(\frac{c\Omega}{2p}\right)-1}\right).
$$
\end{small}
We write $\mse$ for the sum $\sum_{c\in V_{2p}}\frac{1}{\wp\left(\frac{c\Omega}{2p}\right)-1}$. Then we have
\begin{equation}\label{k1}
  \frac{L^{(6p)}(C_2,1)}{\eta_2\sqrt[3]{p}}+\frac{L^{(6p)}(C_{2p},1)}{\eta_2\sqrt[3]{p}}=\frac{3}{4p}\mathrm{Tr}_{\CH_p/H_p}\left(\sqrt[3]{2}\cdot\mse\right)
\end{equation}
and
\begin{equation}\label{k2}
  \frac{L^{(6p)}(C_{2^2},1)}{\eta_{2^2}\sqrt[3]{p}}+\frac{L^{(6p)}(C_{2^2p^2},1)}{\eta_{2^2}\sqrt[3]{p}}=\frac{3}{4p}\mathrm{Tr}_{\CH_p/H_p}\left(\sqrt[3]{2^2}\cdot\mse\right).
\end{equation}

Given any $\sigma\in \Gal(\CH_p/H_p)$, we write $\chi_2(\sigma)=(\sqrt[3]{2})^{\sigma-1}$ and $\chi_2(\sigma)^2=(\sqrt[3]{2^2})^{\sigma-1}$. Note that these characters take values in $\mu_3$.

Now, dividing \eqref{k1} by $\sqrt[3]{2}$ and \eqref{k2} by $\sqrt[3]{2^2}$, we get
\begin{equation}\label{k11}
    \frac{L^{(6p)}(C_2,1)}{\Omega_1}+\frac{L^{(6p)}(C_{2p},1)}{\Omega_1}=\frac{3}{4p}\sum_{\sigma\in \Gal(\CH_p/H_p)}\chi_2(\sigma)\mse^\sigma
\end{equation}
and
\begin{equation}\label{k22}
    \frac{L^{(6p)}(C_{2^2},1)}{\Omega_1}+\frac{L^{(6p)}(C_{2^2p^2},1)}{\Omega_1}=\frac{3}{4p}\sum_{\sigma\in\Gal(\CH_p/H_p)}\chi_2(\sigma)^2\mse^\sigma.
\end{equation}
By Lemma \ref{key-val}, we know that $\ord_3(3\mse^\sigma)=\ord_3((3\mse)^\sigma)\geq \frac{2}{3}$. Thus \eqref{k11} and \eqref{k22} give
\begin{equation}\label{div3}
\ord_3\left(\frac{L^{(6p)}(C_{2^2p^2},1)}{\Omega_1}\right)\geq 1,\quad \ord_3\left(\frac{L^{(6p)}(C_{2p},1)}{\Omega_1}\right)\geq1.
\end{equation}
As we mentioned after \eqref{mention}, \eqref{div3} completes the proof Theorem \ref{sum-exp-3bsd}.

Recall that $\sqrt[3]{p}\equiv \sqrt[3]{2}\equiv 2\mod 3^{\frac{1}{3}}$. Thus dividing both sides of \eqref{k11} and \eqref{k22} by $3$ and subtracting one from the other, and noting that
\[\ord_3(\chi_2(\sigma)-\chi_2(\sigma)^2)\geq\frac{1}{2}> \frac{1}{3}=\ord_3\left(\wp\left(\frac{c\Omega}{2p}\right)-1\right),\]
we see that
\begin{equation}\label{s11}
  \frac{L(C_{2p},1)}{3\Omega_1}-\frac{L(C_{2^2p^2},1)}{3\Omega_1}
\end{equation}
has a positive $3$-adic valuation. Here, we use the fact that $\chi_2(\sigma)-\chi_2(\sigma)^2$ is fixed by the elements of $\Gal(\CH_p/H_p)$ because it takes values in $K$, and that $\ord_3((\chi_2(\sigma)-\chi_2(\sigma)^2)\mse^\sigma)\geq \frac{1}{6}$.

By \eqref{div3}, we know that $\frac{L(C_{2p},1)}{3\Omega_1}$ and $\frac{L(C_{2^2p^2},1)}{3\Omega_1}$ lie in $\ov{\BZ}_3$. Recall also that we have $\sqrt[3]{2p}\equiv 1\mod 3^{\frac{1}{3}}$ and that the period relations $\Omega_{2p}=\frac{\Omega_1}{\sqrt[3]{2p}}$ and $\Omega_{2^2p^2}=\frac{\Omega_1}{\sqrt[3]{2^2p^2}}$. Then we know that
\[\frac{L(C_{2p},1)}{3\Omega_1}-\frac{L(C_{2p},1)}{3\Omega_{2p}} \quad \text{ and }\quad \frac{L(C_{2^2p^2},1)}{3\Omega_1}-\frac{L(C_{2^2p^2},1)}{3\Omega_{2^2p^2}}\]
have positive $3$-adic valuations. Now, since $\frac{L(C_{2p},1)}{\Omega_{2p}}, \frac{L(C_{2^2p^2},1)}{\Omega_{2^2p^2}}\in \BQ$, from \eqref{div3} and \eqref{s11}, we obtain
$$
  \frac{L(C_{2p},1)}{3\Omega_{2p}}\equiv \frac{L(C_{2^2p^2},1)}{3\Omega_{2^2p^2}}\mod 3.
$$
This congruence combined with the second assertion of (1) in Theorem \ref{sum-exp-3bsd} shows that
$$
  \frac{L(C_{2p},1)}{3\Omega_{2p}}\equiv \frac{L(C_{2^2p^2},1)}{3\Omega_{2^2p^2}}\equiv1\mod 3.
$$
This completes the proof of Theorem \ref{mod3-bsd-2p}.

\medskip

Next, we give a $3$-descent result for $C_{2p}, C_{2^2p^2}$ (resp.\ $C_{2p^2}, C_{2^2p}$) when $p\equiv 5\mod9$ (resp.\ $p\equiv2\mod 9$).

\begin{prop}\label{3-descent} Let $C_{2^i p^j}\colon x^2+y^2=2^i p^j$, where $i,j\in \{1,2\}$ and $p\equiv 2,5\bmod 9$. Assume in addition that $2^i p^j\equiv \pm 1\bmod 9$. Then the $3$-part of the Tate--Shafarevich group $\Sha(C_{2^ip^j})[3]$ of $C_{2^ip^j}$ over $\BQ$ is trivial.
\end{prop}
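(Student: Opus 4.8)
The plan is to carry out a $3$-isogeny descent in the spirit of Razar \cite{razar1,razar2} and its adaptation by Zhang \cite{szhang}. Write $n=2^ip^j$. In the projective model $X^3+Y^3=nZ^3$ the three points at infinity $[1:-1:0]$, $[1:-\omega:0]$, $[1:-\omega^2:0]$ (with $\omega$ a primitive cube root of unity) form a subgroup $T$ of order $3$ which is stable under $\Gal(\ov{\BQ}/\BQ)$, the two non-trivial points being defined over $K$ and interchanged by complex conjugation. Taking $[1:-1:0]$ as origin, we obtain a $3$-isogeny $\phi\colon C_n\to C_n':=C_n/T$ defined over $\BQ$, with dual $\hat\phi\colon C_n'\to C_n$ satisfying $\hat\phi\circ\phi=[3]$. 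Over $K$ this isogeny is multiplication by the prime element $\sqrt{-3}$ above $3$, which is the structural reason the congruence condition at $3$ turns out to be decisive. As Galois modules $C_n[\phi]\cong\mu_3$ and $C_n'[\hat\phi]\cong\BZ/3\BZ$.

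First I would reduce the statement to a Selmer computation. From the exact sequence $0\to C_n[\phi]\to C_n[3]\xrightarrow{\phi}C_n'[\hat\phi]\to 0$ one checks that $\Sha(C_n)[\phi]$ is precisely the kernel of the map $\Sha(C_n)[3]\xrightarrow{\phi}\Sha(C_n')[\hat\phi]$ induced by $\phi$. Hence it suffices to prove $\Sha(C_n)[\phi]=0$ and $\Sha(C_n')[\hat\phi]=0$. Using the descent exact sequence
\[0\to C_n'(\BQ)/\phi(C_n(\BQ))\to S^{(\phi)}(C_n/\BQ)\to \Sha(C_n)[\phi]\to 0\]
together with its analogue for $\hat\phi$, this amounts to showing that the $\phi$- and $\hat\phi$-Selmer groups are entirely accounted for by the Mordell--Weil groups. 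Concretely, I would show that
\[\dim_{\BF_3}S^{(\phi)}(C_n/\BQ)+\dim_{\BF_3}S^{(\hat\phi)}(C_n'/\BQ)\]
equals the value forced by $C_n(\BQ)$ (recall $C_n$ has trivial rational torsion and rank $0$ or $1$), which by the two descent sequences forces both $\Sha$-terms to vanish.

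The core is therefore the explicit evaluation of these Selmer groups. Via the Kummer connecting maps, $S^{(\phi)}(C_n/\BQ)$ embeds into $\BQ^\times/(\BQ^\times)^3$ and $S^{(\hat\phi)}(C_n'/\BQ)$ into $H^1(\BQ,\BZ/3\BZ)$; after base change to $K$ both are cut out inside a subgroup of $K^\times/(K^\times)^3$ supported on the primes dividing $3n$. Since $K$ has class number one and $\CO_K^\times=\mu_6$ contributes nothing modulo cubes, the candidate global classes are spanned by $\sqrt{-3}$, $2$ and $p$; the congruences $p\equiv 2,5\bmod 9$ (so that $p$ is inert in $K$), together with cubic reciprocity and the cubic residue symbols $\left(\frac{2}{p}\right)_3$, control the local images at $2$ and at $p$. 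I would compute each local condition in turn, using the good-reduction model $\CA$ of \eqref{huaa} and the $3$-adic estimates of Lemma \ref{val-pt} and Lemma \ref{key-val} to treat the ramified prime above $3$.

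The hard part will be this last local condition at $\sqrt{-3}$. Because $3$ ramifies in $K$ and $\ker\phi$ is supported at the ramified prime, the local descent image at $3$ is the delicate term, and it is exactly here that the hypothesis $2^ip^j\equiv\pm1\bmod 9$ enters: it forces the local image of the descent map at $3$ to be trivial, so that no unexplained Selmer class survives there. Making this precise requires the $3$-adic analysis of the formal group of $\CA$ at $3$, which is the analogue for $n=2^ip^j$ of Zhang's computation for $C_p$ and $C_{p^2}$ in \cite{szhang}; the only genuinely new feature is the extra prime $2$ dividing $n$, and it is handled by the same cubic-residue bookkeeping at the prime $2$. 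Once all local conditions are determined, a dimension count shows $S^{(\phi)}$ and $S^{(\hat\phi)}$ carry no excess beyond the Mordell--Weil contribution, yielding $\Sha(C_n)[\phi]=\Sha(C_n')[\hat\phi]=0$ and hence $\Sha(C_{2^ip^j})[3]=0$.
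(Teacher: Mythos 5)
Your skeleton coincides with the paper's: you use the same rational $3$-isogeny (your subgroup $T$ of points at infinity is precisely the kernel $\{(0,1,0),(0,\pm 2^2(\sqrt{-3})^3n,1)\}$ of the isogeny $\phi\colon E_n\to E_n'$ that the paper writes on the Weierstrass model $y^2z=x^3-2^43^3n^2z^3$), and the same reduction of $\Sha(C_n)[3]=0$ to the two statements $\Sha(E_n)[\phi]=0$ and $\Sha(E_n')[\hat\phi]=0$ via the descent exact sequences. The divergence is in how the Selmer groups are evaluated. The paper performs no local computation: it observes that $n=2^ip^j$ has no prime divisor congruent to $1$ modulo $3$, so hypothesis (H) of Satg\'e's Th\'eor\`eme 2.9 in \cite{satge2} holds, and that theorem directly yields $\Sel_\phi(E_n)=\BZ/3\BZ$ and $\Sel_{\hat\phi}(E_n')=0$. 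You propose instead to recompute these Selmer groups by hand; your outline (global classes supported on $\sqrt{-3}$, $2$, $p$, with the hypothesis $2^ip^j\equiv\pm1\bmod 9$ entering only through the local condition at the ramified prime above $3$) is indeed the shape of Satg\'e's argument, but as written every substantive step --- the determination of the local images at $2$, at $p$ and at $\sqrt{-3}$ --- is asserted rather than carried out, so what you have is an outline of \cite{satge2} rather than a proof. Note also that the answer you must land on is asymmetric in the two curves ($\BZ/3\BZ$ versus $0$), not merely a bound on the sum of the two Selmer dimensions.

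One genuine logical issue to repair if you execute your plan: your final deduction appeals to ``$C_n$ has rank $0$ or $1$'' to pin down the Mordell--Weil contribution, but in the paper's logical order the rank and non-vanishing statements for exactly these curves are consequences of this proposition (it feeds into Theorem \ref{main1}), so assuming the rank here risks circularity. This is avoidable without any rank input: once $\Sel_\phi(E_n)=\BZ/3\BZ$ is known, the rational $3$-torsion $E_n'(\BQ)[\hat\phi]\cong\BZ/3\BZ$ injects into $E_n'(\BQ)/\phi(E_n(\BQ))$ (a nonzero $Q=\phi(P)$ with $\hat\phi(Q)=0$ would force $3P=0$ and hence $P=0$, since $E_n(\BQ)$ is torsion-free), so the image of the Mordell--Weil group already exhausts $\Sel_\phi(E_n)$ and $\Sha(E_n)[\phi]=0$; and $\Sel_{\hat\phi}(E_n')=0$ gives $\Sha(E_n')[\hat\phi]=0$ outright. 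This is exactly how the paper closes the argument.
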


\begin{proof}
We write
$$E_n\colon y^2z=x^3-2^43^3n^2z^3.$$
This is birationally equivalent to $C_n$, and over $\BQ$ it is $3$-isogenous to
$$E_n'\colon y^2z=x^3+2^4n^2z^3.$$
Explicitly, the isogeny $\phi\colon E_n \to E_n'$ is given by
$$
\phi(x,y,z)=(3^{-2}(x^4-2^63^3n^2xz^3), 3^{-3}y(x^3+2^73^3n^2z^3),x^3z)
$$
and the kernel of $\phi$ is $\{(0,1,0), (0, \pm 2^2(\sqrt{-3})^3 n, 1)\}$.

By assumption, we have $n=2^ip^j\equiv \pm 1\bmod 9$.
This happens precisely when $n=2p, 2^2p^2$ and $p\equiv 5\bmod 9$ or $n=2p^2, 2^2p$ and $p\equiv 2\bmod 9$.
Since $n$ has no prime divisor congruent to $1$ modulo $3$, hypothesis (H) of \cite[Th\'{e}or\`{e}me 2.9]{satge2} clearly holds, and thus (1) of the theorem gives us
\begin{equation}\label{selphi}\Sel_\phi(E_n)=\BZ/3\BZ\quad \text{ and }\quad \Sel_{\hat{\phi}}(E_n')=0,
\end{equation}
where $\hat{\phi}$ denotes the dual isogeny of $\phi$. Here, $\Sel_f(E)$ denotes the Selmer group of an elliptic curve $E$ over $\BQ$ associated to an isogeny $f$ from $E$ to another elliptic curve $E'$. Recall that we have an exact sequence (see, for example, \cite{sil})
$$0\to E_n'(\BQ)/\phi(E_n(\BQ)) \to \Sel_\phi(E_n)\to \Sha(E_n)[\phi]\to 0.$$
where $\Sha(E_n)[\phi]$ denotes the subgroup of elements in $\Sha(E_n)$ killed by $\phi$.
Now, we know that $E_n(\BQ)_{\mathrm{tor}}=0$, $E_n'(\BQ)_{\mathrm{tor}}=\{(0,1,0),(0,\pm 2^2D,1)\}\simeq \BZ/3\BZ$, thus $\Sha(E_n)[\phi]=0$. We also have the following exact sequence (see, for example, \cite{HYS})
$$0\to \Sha(E_n)[\phi]\to \Sha(E_n)[3]\xrightarrow{\phi} \Sha(E_n')[\hat{\phi}]=0,$$
where the last equality is immediate from \eqref{selphi}. Thus we obtain $\Sha(E_n)[3]$ is trivial, as required.
\end{proof}

We end this section by completing the proof the $3$-part of the Birch--Swinnerton-Dyer conjecture of the curves appearing in Theorem \ref{main1} and Corollary \ref{main1+}.

\begin{proof}[Proof of Theorem \ref{main1} and Corollary \ref{main1+}.]
We will just give the details for the curves $C_{2p}, C_{2p^2}$ under the assumption of $p\equiv 5\mod 9$, and the other case is similar. By a theorem of Satg\'e \cite{satge} and the theorem of Gross--Zagier and Kolyvagin, we know that $L(C_{2p^2},s)$ has a simple zero at $s=1$, and $C_{2p^2}(\BQ)$ is of rank one. By Theorem \ref{mod3-bsd-2p}, we know that $L(C_{2p},1)$ does not vanish, so it follows by the theorem of Gross--Zagier and Kolyvagin or Coates--Wiles \cite{CW} that $C_{2p}(\BQ)$ is finite. We also remark that in these cases, the Tate--Shafarevich group of $C_{2p}$ and $C_{2p^2}$ are finite.

From \cite{CST}, we know that the $3$-part Birch--Swinnerton-Dyer conjecture holds for the product curve $C_{2p}\times C_{2p^2}$. Thus
\begin{equation}\label{bsd-prod}
\begin{aligned}
  &\ord_3\left(\frac{L(C_{2p},1)L'(C_{2p^2},1)}{\Omega_{2p}\Omega_{2p^2}\cdot\wh{h}(P)}\right)\\
  &=\ord_3\left(T(C_{2p})T(C_{2p^2})\cdot\#(\Sha(C_{2p}))\#(\Sha(C_{2p^2}))\right)
\end{aligned}
\end{equation}
where $T(C_{2p})=\prod_{\ell\mid 6p}m_{\ell}(C_{2p})$ and $T(C_{2p^2})=\prod_{\ell\mid 6p}m_\ell(C_{2p^2})$ are the products of local Tamagawa numbers of $C_{2p}$ and $C_{2p^2}$, respectively. Here, $m_\ell(C_n)$ denotes the local Tamagawa number of $C_n$ at the prime $\ell$. Since $C_{2p^2}(\BQ)$ is of rank one, we let $P$ be a generator of the free part of $C_{2p^2}(\BQ)$, and denote by $\wh{h}(P)$ the N\'{e}ron--Tate height of $P$. Note that the local Tamagawa numbers of $C_{2p}$ and $C_{2p^2}$ are all equal to $1$ except that $\ell=3$, where $m_3(C_{2p})=3$ (see \cite{zk87}). By Theorem \ref{mod3-bsd-2p} and Proposition \ref{3-descent},  the $3$-part of the Birch--Swinnerton-Dyer conjecture holds for the curve $C_{2p}$. Thus
\begin{equation}\label{bsd-2p2}
  \ord_3\left(\frac{L(C_{2p},1)}{\Omega_{2p}}\right)
  =\ord_3\left(T(C_{2p})\cdot\#(\Sha(C_{2p}))\right).
\end{equation}
Now the $3$-part of the Birch--Swinnerton-Dyer conjecture for $C_{2p^2}$ follows from \eqref{bsd-prod} and \eqref{bsd-2p2}, and Corollary \ref{main1+} follows as a consequence.
\end{proof}

\section{Some relations between the $2$-Selmer group of $C_{2p^j}$ and the $2$-class group of $\BQ(\sqrt[3]{p})$}\label{section4}

The aim of this section is to give a proof of Theorem \ref{main2}. Given $j\in \{1,2\}$, the curve $C_{2p^j}$ is a twist of $C_2$ by the cubic extension $\BQ(\sqrt[3]{p^j})/\BQ$ of discriminant prime to $2$. Since a global minimal Weierstrass model of $C_2$ is given by $y^2=x^3-27$, we know that
\begin{equation}\label{min-model}y^2=x^3-27p^{2j}\qquad (j=1,2)
\end{equation}
gives a minimal model of $C_{2p^j}$ at $2$.

In the remainder of this section, we write $E$ for $C_{2p^j}$ for simplicity, and we work with the equation given in \eqref{min-model}. We will study the relation between the $2$-class group of $L=\BQ(\sqrt[3]{p})$ and the $2$-Selmer group $\Sel_2(E)$.
It is well known that the torsion part $E(\BQ)_{\mathrm{tor}}$ of $E(\BQ)$ is trivial, and we identify
\[\frac{\BQ[x]}{(x^3-27p^2)}=\frac{\BQ[x]}{(x^3-p^2)}=\frac{\BQ[x]}{(x^3-p)}=\frac{\BQ[x]}{(x^3-27p^4)}\]
with $L$. To see where these fields come from, one may look at the classical proof of the weak Mordell--Weil theorem in the case $E(\BQ)[2]=0$ (see, for example, \cite{cassels, sil}). The proof of the following lemma can be found in \cite{cl}.

\begin{lem}\label{2dec1}
Let $\mathrm{Res}_{L/\BQ}\mu_2$ be the restriction of scalars of $\mu_2$ from $L$ to $\BQ$, treated as a $G_\BQ$-module. Then we have the exact sequence
\[0\ra E[2]\ra \mathrm{Res}_{L/\BQ}\mu_2\stackrel{\BN}{\longrightarrow}\mu_2\ra 0,\]
where all of the terms in the exact sequence are considered as $\ov{\BQ}$-points, and $\BN$ denotes the multiplication of the three components of $\mathrm{Res}_{L/\BQ}\mu_2$. Moreover, we have an isomorphism
\begin{equation}\label{h2g}
  H^1(\BQ, E[2])\simeq (L^\times/(L^\times)^2)_\Box
\end{equation}
where
\[(L^\times/(L^\times)^2)_\Box:=\{\alpha\in L^\times/(L^\times)^2: N(\alpha)\in (\BQ^\times)^2\}\]
and $N$ is the norm map from $L$ to $\BQ$.
\end{lem}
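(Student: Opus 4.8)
\medskip

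The plan is to derive both statements from the explicit $2$-torsion structure of $E$, combined with Shapiro's lemma and Kummer theory. First I would make $E[2]$ explicit as a $G_\BQ$-module. Writing $f(x)=x^3-27p^{2j}$, the three nontrivial $2$-torsion points are $T_i=(e_i,0)$, where $e_1,e_2,e_3$ are the roots of $f$. Since $2j\in\{2,4\}$ is not divisible by $3$, the number $27p^{2j}=3^3p^{2j}$ is not a perfect cube in $\BQ$, so $f$ is irreducible and $\BQ[x]/(f)$ is the cubic field already identified with $L$ in the discussion preceding the lemma. Consequently $G_\BQ$ permutes $\{T_1,T_2,T_3\}$ in exactly the same way it permutes the three embeddings $\tau_1,\tau_2,\tau_3\colon L\hookrightarrow\ov{\BQ}$.

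For the short exact sequence I would identify $(\mathrm{Res}_{L/\BQ}\mu_2)(\ov{\BQ})$ with $\mu_2\times\mu_2\times\mu_2$, on which $G_\BQ$ acts by permuting the factors through its action on the $\tau_i$, and under which $\BN$ is the product of the three coordinates. I then define a map $E[2]\to\mathrm{Res}_{L/\BQ}\mu_2$ by $O\mapsto(1,1,1)$ and $T_i\mapsto$ the tuple that is $+1$ in the $i$-th slot and $-1$ in the other two. A direct check shows this is an injective group homomorphism onto $\ker\BN$: each image tuple has coordinate-product $1$, and the relation $T_1+T_2=T_3$ matches coordinatewise multiplication. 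It is $G_\BQ$-equivariant because Galois permutes the $T_i$ and the slots identically, and since $\BN$ is clearly surjective on $\ov{\BQ}$-points the sequence $0\to E[2]\to\mathrm{Res}_{L/\BQ}\mu_2\xrightarrow{\BN}\mu_2\to0$ is exact.

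For the isomorphism I would pass to the long exact sequence in $G_\BQ$-cohomology. Shapiro's lemma gives $H^i(\BQ,\mathrm{Res}_{L/\BQ}\mu_2)\cong H^i(L,\mu_2)$, and under this identification the map induced by $\BN$ is the corestriction $H^i(L,\mu_2)\to H^i(\BQ,\mu_2)$. On $H^0$ the $G_\BQ$-invariants of $\mathrm{Res}_{L/\BQ}\mu_2$ form the diagonal copy of $\mu_2$, on which $\BN$ acts by $\epsilon\mapsto\epsilon^3=\epsilon$; hence $\BN$ is surjective on $H^0$, the connecting map $H^0(\BQ,\mu_2)\to H^1(\BQ,E[2])$ vanishes, and $H^1(\BQ,E[2])$ injects into $H^1(\BQ,\mathrm{Res}_{L/\BQ}\mu_2)$ with image $\ker\BN$. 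Via Kummer theory $H^1(L,\mu_2)\cong L^\times/(L^\times)^2$, and the corestriction becomes the field norm $N=N_{L/\BQ}\colon L^\times/(L^\times)^2\to\BQ^\times/(\BQ^\times)^2$. Exactness therefore identifies $H^1(\BQ,E[2])$ with $\ker N=\{\alpha: N(\alpha)\in(\BQ^\times)^2\}=(L^\times/(L^\times)^2)_\Box$, as desired.

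The hard part will be the bookkeeping in the cohomological step, namely confirming that the product map $\BN$ on $\mathrm{Res}_{L/\BQ}\mu_2$ corresponds to corestriction under Shapiro's isomorphism, and hence to the field norm on $L^\times/(L^\times)^2$, together with the $H^0$-computation that forces the injection to land on the entire kernel of the norm. The remaining ingredients, namely the irreducibility of $f$, the explicit identification $E[2]\cong\ker\BN$, and the Kummer description of $H^1(L,\mu_2)$, are routine.
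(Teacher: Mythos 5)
Your proposal is correct: the identification of $E[2]$ with $\ker\BN$ via the tuple $T_i\mapsto(+1\text{ in slot }i,\,-1\text{ elsewhere})$ (which is the Weil pairing against the $T_i$), followed by the long exact sequence, Shapiro's lemma, and the standard fact that the product map on an induced module induces corestriction (hence the field norm under Kummer theory), is exactly the standard argument. The paper itself gives no proof and simply refers to \cite{cl}, where this is precisely the route taken, so your approach matches the intended one.
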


We denote by $\delta$ the morphism which is the composition of the Kummer map
\[E(\BQ)/2E(\BQ)\ra H^1(\BQ, E[2])\]
with the isomorphism in \eqref{h2g}, which we still call the global Kummer map. Then we have the following description of $\delta$ (for a detailed proof, see \cite{cl}):

\begin{lem}\label{2dec2}
The map $\delta$ is given by
\[\delta\colon E(\BQ)/2E(\BQ)\ra (L^\times/(L^\times)^2)_\Box,\qquad P\mapsto x(P)-\beta,\]
where $x(P)$ is the $x$-coordinate of $P$ and $\beta$ is the image of $x$ in $L=\BQ[x]/(x^3-27p^{2j})$.
\end{lem}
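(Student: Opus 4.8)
The plan is to follow the strategy of \cite{cl}, reducing the statement to two independent assertions: first, that the naive map
\[\phi\colon E(\BQ)/2E(\BQ)\ra (L^\times/(L^\times)^2)_\Box,\qquad P\mapsto x(P)-\beta,\]
is a well-defined group homomorphism with the correct target; and second, that $\phi$ coincides with the composite $\delta$ of the Kummer map with the isomorphism \eqref{h2g} of Lemma \ref{2dec1}. Since $f(X)=X^3-27p^{2j}$ is irreducible over $\BQ$ (as $27p^{2j}=3^3p^{2j}$ is not a cube, $3\nmid j$), it has no rational root, so every affine point $P\in E(\BQ)$ has $y(P)\neq 0$ and hence $x(P)-\beta\in L^\times$; thus $\phi$ is defined on affine points, with $\phi(O)=1$.

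For the first assertion I would record two routine computations. Writing $N$ for the norm from $L$ to $\BQ$ and $\beta_1=\beta,\beta_2,\beta_3$ for the roots of $f$, one has
\[N(x(P)-\beta)=\prod_{k=1}^{3}\bigl(x(P)-\beta_k\bigr)=f(x(P))=y(P)^2\in(\BQ^\times)^2,\]
so the image of $\phi$ lies in $(L^\times/(L^\times)^2)_\Box$. For the homomorphism property it suffices to check that $\phi(P_1)\phi(P_2)\phi(P_3)$ is trivial whenever $P_1+P_2+P_3=O$; substituting the line $y=mx+b$ through three such points into $y^2=f(x)$ gives $\prod_k(x(P_k)-\beta)=(m\beta+b)^2$, a square in $L$, which yields the claim (the degenerate cases of vertical and tangent lines are handled in the same way).

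For the second assertion I would unwind the isomorphism \eqref{h2g} at the level of cocycles. Under the inclusion $E[2]\hookrightarrow \mathrm{Res}_{L/\BQ}\mu_2$ the component at the distinguished embedding $\beta\mapsto\beta$ is the Weil-pairing character $S\mapsto e_2(S,T)$, where $T=(\beta,0)$; combined with Shapiro's lemma, the image under \eqref{h2g} of a class in $H^1(\BQ,E[2])$ is represented by the $\Gal(\ov{\BQ}/L)$-cocycle obtained from this component. Fixing $P\in E(\BQ)$ and $Q$ with $2Q=P$, the Kummer cocycle is $\tau\mapsto \tau Q-Q$, so its image in $L^\times/(L^\times)^2$ is the class of $\tau\mapsto e_2(\tau Q-Q,T)$ for $\tau\in\Gal(\ov{\BQ}/L)$. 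Taking $f_T=x-\beta$ with $\div(f_T)=2(T)-2(O)$ and a function $g_T$ defined over $L$ with $g_T^2=c'\,(f_T\circ[2])$ for some fixed $c'\in L^\times$, the standard evaluation of the Weil pairing gives
\[e_2(\tau Q-Q,T)=\frac{g_T(\tau Q)}{g_T(Q)}=\frac{\tau(g_T(Q))}{g_T(Q)},\]
which is exactly the Kummer cocycle of $g_T(Q)^2=c'\,f_T(P)=c'\,(x(P)-\beta)$. Hence $\delta(P)=c'\,(x(P)-\beta)$ in $L^\times/(L^\times)^2$, with $c'$ independent of $P$.

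Finally, since $\delta$ and $\phi$ are both homomorphisms with $\delta=c'\cdot\phi$ pointwise, expanding $\delta(P+P')=\delta(P)\delta(P')$ gives $c'=c'^2$ in $L^\times/(L^\times)^2$, whence $c'=1$ (every element is its own inverse); therefore $\delta=\phi$ and the lemma follows. The main obstacle is the Weil-pairing bookkeeping of the third paragraph: one must justify that $g_T$ can be chosen over $L$ (which holds because $\div(g_T)=[2]^*(T)-[2]^*(O)$ is $\Gal(\ov{\BQ}/L)$-stable, as $T$ and $O$ are $L$-rational) and control the normalizing constant $c'$. The device of absorbing $c'$ through the homomorphism comparison in the last step is precisely what makes the argument clean, avoiding any explicit computation of $c'$.
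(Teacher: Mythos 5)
The paper gives no proof of this lemma at all: it simply defers to \cite{cl} (and to Cassels for the classical origin of the formula), and your argument is correct and is essentially the standard one carried out in that reference --- identifying the component of $E[2]\hookrightarrow \mathrm{Res}_{L/\BQ}\mu_2$ with the Weil-pairing character against $T=(\beta,0)$, applying Shapiro's lemma, and evaluating the pairing via $g_T$ with $g_T^2=c'(f_T\circ[2])$. Your device of absorbing the normalizing constant $c'$ by comparing the two homomorphisms is a clean and valid way to finish (the only points left implicit, both standard, are Hilbert 90 to descend $g_T$ to $L(E)$ and the fact that $Q$ avoids the zeros and poles of $g_T$).
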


We remark that the above description of the map $\delta$ dates back to the proof of the Mordell--Weil theorem when $E(\BQ)[2]=0$. See the book of Cassels \cite{cassels} for more details. Similarly, we can state Lemmas \ref{2dec1} and \ref{2dec2} over $\BQ_q$ for all primes $q$, finite or infinite. We put $L_q=L\otimes_\BQ\BQ_q$. Then we obtain the local Kummer map $\delta_q$, which is the composition of the Kummer map
\[E(\BQ_q)/2E(\BQ_q)\ra H^1(\BQ_q, E[2])\]
with the isomorphism
\[H^1(\BQ_q, E[2])\simeq (L^\times_q/(L^\times_q)^2)_\Box,\]
where
\[(L^\times_q/(L^\times_q)^2)_\Box:=\{x\in L^\times_q/(L^\times_q)^2: N(x)\in (\BQ^\times_q)^2\}\]
and $N$ is the norm map from $L_q$ to $\BQ_q$.

\begin{lem}\label{2dec3}
The map $\delta_q$ is given by
\[\delta_q\colon E(\BQ_q)/2E(\BQ_q)\ra (L^\times_q/(L^\times_q)^2)_\Box,\qquad P\mapsto x(P)-\beta,\]
where $\beta$ is the image of $x$ in $L_q=\BQ_q[x]/(x^3-27p^{2j})$.
\end{lem}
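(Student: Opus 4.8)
The plan is to run the arguments of Lemmas \ref{2dec1} and \ref{2dec2} verbatim, with the ground field $\BQ$ replaced by $\BQ_q$ and the cubic field $L$ replaced by the \'etale $\BQ_q$-algebra $L_q=L\otimes_\BQ\BQ_q$. First I would base-change the short exact sequence of $G_\BQ$-modules from Lemma \ref{2dec1}, namely $0\to E[2]\to \Res_{L/\BQ}\mu_2\xrightarrow{\BN}\mu_2\to 0$, along the inclusion $G_{\BQ_q}\hookrightarrow G_\BQ$ attached to a fixed embedding $\ov{\BQ}\hookrightarrow\ov{\BQ}_q$; this yields the same sequence of $G_{\BQ_q}$-modules, with $\Res_{L/\BQ}\mu_2$ replaced by $\Res_{L_q/\BQ_q}\mu_2$. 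Taking $G_{\BQ_q}$-cohomology and combining Shapiro's lemma with local Kummer theory gives $H^1(\BQ_q,\Res_{L_q/\BQ_q}\mu_2)\simeq L_q^\times/(L_q^\times)^2$, and the associated long exact sequence then produces the local isomorphism $H^1(\BQ_q,E[2])\simeq (L_q^\times/(L_q^\times)^2)_\Box$ exactly as \eqref{h2g} was obtained globally, the norm now being the product of the local norms on the factors of $L_q$.

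With this identification in hand, $\delta_q$ is by definition the connecting homomorphism of the multiplication-by-$2$ sequence $0\to E[2]\to E\xrightarrow{\times 2}E\to 0$ over $\BQ_q$, followed by the above isomorphism. The explicit description $P\mapsto x(P)-\beta$ is then a purely formal computation of this connecting map: for $P\in E(\BQ_q)$ one chooses $Q$ with $2Q=P$, and the resulting cocycle $\sigma\mapsto \sigma Q-Q$ is matched, through the trivialization of $E[2]$ recorded in Lemma \ref{2dec1}, with the class of $x(P)-\beta$, where $\beta$ is the image of $x$ in $L_q=\BQ_q[x]/(x^3-27p^{2j})$. Since the three nontrivial $2$-torsion points of $E$ have $x$-coordinates equal to the three roots of $x^3-27p^{2j}$, which cut out $L_q$ as an \'etale algebra over $\BQ_q$ by \eqref{min-model}, this matching is identical to the one carried out in \cite{cl} for the global case of Lemma \ref{2dec2}; nothing in that derivation uses that the base field is $\BQ$ rather than $\BQ_q$, nor that $L$ is a field. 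Hence the same formula holds locally.

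The one point requiring genuine care, and the step I expect to be the main obstacle, is that $L_q$ is in general only an \'etale $\BQ_q$-algebra, that is, a finite product $\prod_i L_{q,i}$ of finite extensions of $\BQ_q$ governed by the factorization of $x^3-27p^{2j}$ over $\BQ_q$, rather than a field as in the global setting. I would therefore verify that Shapiro's lemma, local Kummer theory, and the norm condition defining $(\,\cdot\,)_\Box$ are all compatible with this splitting: one has $\Res_{L_q/\BQ_q}\mu_2=\prod_i\Res_{L_{q,i}/\BQ_q}\mu_2$, the cohomology decomposes accordingly as a product indexed by the factors, and $\beta$ together with $x(P)-\beta$ must be read componentwise, with $N=\prod_i N_{L_{q,i}/\BQ_q}$. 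Once one checks that the isomorphism \eqref{h2g} and the connecting-map computation respect this decomposition, which is routine since each factor is handled by the usual Kummer theory of the local field $L_{q,i}$, the formula $\delta_q(P)=x(P)-\beta$ follows at once. I would present this compatibility as the substance of the proof and refer to \cite{cl} for the factor-by-factor computation, remarking that the entire argument is insensitive to replacing $\BQ$ by any field of characteristic different from $2$.
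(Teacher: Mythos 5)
Your proposal is correct and follows essentially the same route as the paper, which in fact offers no separate argument for this lemma: it simply asserts that Lemmas \ref{2dec1} and \ref{2dec2} carry over verbatim over $\BQ_q$ with $L_q=L\otimes_\BQ\BQ_q$ and refers to \cite{cl} for the underlying computation. The compatibility check you flag as the main point --- that Shapiro's lemma, Kummer theory, the norm condition defining $(\,\cdot\,)_\Box$, and the connecting-map computation all respect the decomposition of the \'etale algebra $L_q$ into a product of local fields --- is precisely the detail the paper leaves implicit.
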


To obtain a precise description of the Selmer group $\Sel_2(E)$, we need to describe the image of all $\delta_q$. We denote by $\fO$ the ring of integers $\BZ[x]/(x^3-27p^{2j})$ of $L$, and we set $\fO_q=\fO\otimes_\BZ\BZ_q$.

\begin{lem}\label{square}
For any prime $q$ and for any $P\in E(\BQ_q)$, we have
\[\delta_q(P)\in (\fO^\times_q/(\fO^\times_q)^2)_\Box\]
where $(\fO^\times_q/(\fO^\times_q)^2)_\Box$ again denotes a subset of $\fO^\times_q/(\fO^\times_q)^2$ consisting of square norms.
\end{lem}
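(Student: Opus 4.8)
The plan is to verify the two defining properties of $(\fO_q^\times/(\fO_q^\times)^2)_\Box$ separately, namely that $\delta_q(P)=x(P)-\beta$ has square norm and that it is represented by a \emph{unit} of $\fO_q$. I would dispose of the norm condition at once: writing $P=(x_0,y_0)$ and recalling that $\beta$ has characteristic polynomial $x^3-27p^{2j}$ over $\BQ_q$,
\[
N_{L_q/\BQ_q}(x_0-\beta)=x_0^3-27p^{2j}=y_0^2\in(\BQ_q^\times)^2,
\]
so the class lies in the subset cut out by $\Box$ automatically. The real content is the integrality claim, that modulo $(L_q^\times)^2$ the element $x_0-\beta$ is represented by an element of $\fO_q^\times$. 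Since an element of the order $\fO_q=\BZ_q[\beta]$ is a unit exactly when its norm is a $\BZ_q$-unit, this reduces to showing that $\ord_\fq(x_0-\beta)$ is even for every prime $\fq\mid q$ of $L$, with a small extra check at the odd primes where $\fO_q$ fails to be maximal, to see that the resulting class is represented by a genuine unit of $\fO_q$.

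Next I would split the points according to the size of $x_0$. If $\ord_q(x_0)<0$ then $P$ lies in the kernel of reduction; here $\ord_q(x_0)=-2m$ is even, so $x_0=t^2u$ with $t\in\BQ_q^\times$ and $u\in\BZ_q^\times$, while $\ord_\fq(\beta)\ge 0$ forces $\beta/x_0$ to have strictly positive $\fq$-valuation, so that $1-\beta/x_0\in\fO_q$ is a unit congruent to $1$. Factoring $x_0-\beta=t^2\cdot u\bigl(1-\beta/x_0\bigr)$ then exhibits $x_0-\beta$ as a square in $L_q^\times$ times a unit of $\fO_q$, settling this case at \emph{every} prime. If instead $\ord_q(x_0)\ge 0$, then $x_0-\beta$ is integral and I would use the norm relation $2\ord_q(y_0)=\sum_{\fq\mid q}f_\fq\,\ord_\fq(x_0-\beta)$ together with reduction. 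At a good prime $q\nmid 6p$ the three roots of $x^3-27p^{2j}$ are distinct modulo $q$, so at most one $\fq$ divides $x_0-\beta$ and that $\fq$ has residue degree $1$; the norm relation then gives $\ord_\fq(x_0-\beta)=2\ord_q(y_0)$, which is even, the other primes contributing $0$.

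There remain the bad primes $q\in\{2,3,p\}$, the archimedean place being trivial since every element of $L_\infty$ is a unit. At $q=p$ and $q=3$ there is a single prime $\fq$ above $q$, of residue degree $1$, so the norm relation by itself yields $\ord_\fq(x_0-\beta)=2\ord_q(y_0)$; as $q$ is odd, the principal units are $2$-divisible, so the unit class of $x_0-\beta$ is represented by an element of $\BZ_q^\times\subset\fO_q^\times$, as required. At $q=2$ the polynomial factors as $(x-1)(x^2+x+1)$ modulo $2$, hence $2$ is unramified with primes $\fq_1,\fq_2$ of residue degrees $1$ and $2$ and $\fO_2$ is maximal; for integral $x_0$ the residue $\bar x_0\in\BF_2$ cannot equal the root $\bar\beta$ at $\fq_2$, which lies in $\BF_4\smallsetminus\BF_2$, so $\ord_{\fq_2}(x_0-\beta)=0$ and the norm relation forces $\ord_{\fq_1}(x_0-\beta)$ even.

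The main obstacle is exactly this analysis at $q\in\{2,3,p\}$: the order $\fO_q$ is non-maximal at $q=3,p$, so equality of valuations no longer immediately produces a unit representative and one must invoke the $2$-divisibility of principal units at odd $q$; the reduction is additive; and at $q=2$ the curve carries a local $2$-torsion point, for which the naive formula $x(P)-\beta$ has a vanishing component and must be replaced by the standard modified descent map. The conceptual reason integrality nonetheless survives at the bad primes is that the component group satisfies $\mathbf{\Phi}[2]=0$: this allows one to represent every class of $E(\BQ_q)/2E(\BQ_q)$ by a point of the identity component $E^0(\BQ_q)$ and so reduce to the good-reduction computation above, exactly as in \cite{cl}. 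I would organise the write-up so that this reduction is applied uniformly at $q\in\{2,3,p\}$, leaving only the explicit valuation bookkeeping indicated above.
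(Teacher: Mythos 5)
Your argument is correct, and at $q=2$ it is essentially the paper's own proof: the same Hensel factorisation $x^3-27p^{2j}=(x-3\sqrt[3]{p^{2j}})(x^2+3\sqrt[3]{p^{2j}}x+(3\sqrt[3]{p^{2j}})^2)$ over $\BZ_2$, the same norm relation $2\ord_2(y)=\ord_2(x-3\sqrt[3]{p^{2j}})+\ord_2(x^2+\cdots)$, and the same case split on the sign of $\ord_2(x)$. Where you genuinely diverge is at the odd primes: the paper disposes of all $q\neq 2$ in one line by citing Lemma~6 of \cite{GP} (justified by the observation that the Kodaira type at each bad prime is IV or IV*, so $\bold{\Phi}[2]=0$), whereas you give a self-contained valuation argument — kernel-of-reduction points via the factorisation $x_0-\beta=t^2u(1-\beta/x_0)$, good primes via distinctness of the roots mod $q$, and the totally ramified primes $3$ and $p$ via the norm relation together with the $2$-divisibility of principal units in odd residue characteristic. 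This buys transparency and independence from the Néron-model machinery at the cost of length; your observation that an element of the (possibly non-maximal) order $\fO_q$ with unit norm is automatically a unit of $\fO_q$ is exactly the right way to sidestep the non-maximality of $\BZ_q[\beta]$ at $q=3,p$. One point you flag but do not execute, and which the paper silently skips as well: over $\BQ_2$ the curve has a rational $2$-torsion point $T=(3\sqrt[3]{p^{2j}},0)$, for which $x(T)-\beta$ has a vanishing component and the relation $2\ord_2(y)=\cdots$ degenerates; if you write this up you should carry out the standard computation with the modified descent map (replacing the vanishing coordinate by the product of the remaining ones, or equivalently by $3x(T)^2$) to check that $\delta_2(T)$ still lands in $(\fO_2^\times/(\fO_2^\times)^2)_\Box$. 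With that one supplement your proof is complete.
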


\begin{proof}

For $q\neq 2$, this is Lemma 6 of \cite{GP}. Here, we note that since the Kodaira type of $E$ at each prime $\ell$ is IV or IV*, $\bold{\Phi}[2]$ is trivial, where $\bold{\Phi}=\widetilde{\mathcal{E}}(\overline{\mathbb{F}}_\ell)/\widetilde{\mathcal{E}}^0(\overline{\mathbb{F}}_\ell)$ denotes the component group scheme of the \Neron model $\mathcal{E}$ of $E$ over $\BQ_\ell$, and we thank the referee for pointing this out. Here, $\widetilde{\phantom{a}}$ denotes reduction modulo~$\ell$ and $\widetilde{\mathcal{E}}^0$ denotes the identity component. For $q=2$, we just need to show $\ord_2(x(P)-\beta)$ is even, where $\ord_2$ denotes the normalized $2$-adic valuation with $\ord_2(2)=1$. Note that
\begin{equation}\label{x3}
  x^3-27p^{2j}\equiv (x+1)(x^2+x+1) \mod 2.
\end{equation}
We write also $3\sqrt[3]{p^{2j}}$ for the unique root in $\BZ_2$ of $x^3-27p^{2j}=0$ satisfying $3\sqrt[3]{p^{2j}}\equiv 1\bmod 2$ obtained by Hensel's lemma. Thus we have a factorisation
$$x^3-27p^{2j}=(x-3\sqrt[3]{p^{2j}})(x^2+3\sqrt[3]{p^{2j}}x+(3\sqrt[3]{p^{2j}})^2)$$
over $\BZ_2$. Now, we have
$$2\ord_2(y)=\ord_2(x-3\sqrt[3]{p^{2j}})+\ord_2(x^2+3\sqrt[3]{p^{2j}}x+(3\sqrt[3]{p^{2j}})^2)$$
for $x=x(P)$ and $y=y(P)$ in $\BQ_2$, so we know that the right hand side is even. If $\ord_2(x)<0$, then $\ord_2(x-3\sqrt[3]{p^{2j}})=\ord_2(x)$ and $\ord_2(x^2+3\sqrt[3]{p^{2j}}x+(3\sqrt[3]{p^{2j}})^2)=2\ord_2(x)$, so it follows that $3\ord_3(x)$ is even, and thus $\ord_2(x)$ is even. If $\ord_2(x)\geq 0$, noting that $x^2+x+1\nequiv 0 \bmod 2$ in \eqref{x3}, we see that $2\ord_2(y)=\ord_2(x-3\sqrt[3]{p^{2j}})$, and thus  $\ord_2(x-3\sqrt[3]{p^{2j}})$ is even, as required.
\end{proof}

Now, we can describe the image of $\delta_q$.

\begin{lem}\label{loc-k}
\begin{enumerate}
\noindent  \item The image of $\delta_\infty$ is trivial.
  \item If $q\neq 2, \infty$, we have
  \[\mathrm{im}(\delta_q)=(\fO^\times_q/(\fO^\times_q)^2)_\Box.\]
  \item The image of $\delta_2$ is a subgroup of $(\fO^\times_2/(\fO^\times_2)^2)_\Box$ of index $2$, and contains the subgroup of elements in $(\fO^\times_2/(\fO^\times_2)^2)_\Box$ which are congruent to $1$ modulo $4$.
\end{enumerate}

\end{lem}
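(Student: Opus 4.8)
The plan is to treat the three places separately, in each case comparing the order of $\mathrm{im}(\delta_q)$ with that of the target $(\fO^\times_q/(\fO^\times_q)^2)_\Box$, and then upgrading the order comparison to the asserted description via the inclusion already proved in Lemma \ref{square}. The basic input is that the Kummer map is injective, so that $\#\mathrm{im}(\delta_q)=\#\big(E(\BQ_q)/2E(\BQ_q)\big)=|2|^{-1}_q\,\#E(\BQ_q)[2]$, the last equality coming from the structure of $E(\BQ_q)$ as a product of $\BZ_q$ (the formal group) with a finite group. For (1) this is immediate: the cubic $x^3-27p^{2j}$ has a unique real root, so $E(\BR)$ is connected, hence $2$-divisible, so $E(\BR)/2E(\BR)=0$ and $\mathrm{im}(\delta_\infty)$ is trivial.

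\textbf{The odd primes (2).} For $q\neq 2,\infty$ we have $|2|_q=1$, hence $\#\mathrm{im}(\delta_q)=\#E(\BQ_q)[2]$. Let $g$ be the number of primes of $L$ above $q$ and write $L_q=\prod_i L_i$. The group $E(\BQ_q)[2]$ is governed by the $\BQ_q$-rational roots of $x^3-27p^{2j}$, i.e. by the factors $L_i=\BQ_q$; running through the three splitting types $3=1+1+1,\,1+2,\,3$ one gets $\#E(\BQ_q)[2]=2^{g-1}$ in every case. On the other side, since $q$ is odd each factor contributes $\fO^\times_i/(\fO^\times_i)^2\cong\BZ/2\BZ$, and I would compute the kernel of the norm to $\BZ^\times_q/(\BZ^\times_q)^2\cong\BZ/2\BZ$: it is surjective through any split factor when $g\geq 2$, and an isomorphism when $L_q$ is a cubic field, because the norm of a non-square unit is again a non-square (for the totally ramified primes $q=3,p$ this is the reduction congruence $N(u)\equiv\bar u^{\,3}\bmod q$). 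Thus $\#(\fO^\times_q/(\fO^\times_q)^2)_\Box=2^{g-1}$, matching $\#\mathrm{im}(\delta_q)$, and Lemma \ref{square} forces equality. The only case needing extra care is $g=1$ at $q=3,p$, where $L_q$ is a totally ramified cubic field, but it is covered by the same norm identity.

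\textbf{The index at $q=2$ (3, first part).} The factorisation \eqref{x3} together with the irreducibility of $x^2+x+1$ over $\BF_2$ shows $L_2=\BQ_2\times\BQ_2(\omega)$, with $\BQ_2(\omega)$ the unramified quadratic extension; hence $g=2$, $\#E(\BQ_2)[2]=2$, and now the extra factor $|2|^{-1}_2=2$ yields $\#\mathrm{im}(\delta_2)=4$. Writing $F=\BQ_2(\omega)$, a direct computation of local units gives $\#\big(\BZ^\times_2/(\BZ^\times_2)^2\big)=4$ and $\#\big(\cO^\times_F/(\cO^\times_F)^2\big)=8$, while the norm onto $\BZ^\times_2/(\BZ^\times_2)^2$ is already surjective through the $\BZ_2$-factor, so $\#(\fO^\times_2/(\fO^\times_2)^2)_\Box=32/4=8$. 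Comparing with $\#\mathrm{im}(\delta_2)=4$ establishes that the image has index $2$.

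\textbf{The containment (3, second part) — the main obstacle.} The hard part is showing that $\mathrm{im}(\delta_2)$ contains the subgroup $W$ of classes represented by units $\equiv 1\bmod 4$ of square norm, since Lemma \ref{square} only provides an upper bound and I must now produce honest $2$-adic points. The plan is to use the factorisation $x^3-27p^{2j}=(x-e)(x^2+ex+e^2)$ over $\BZ_2$, where $e=3\sqrt[3]{p^{2j}}\equiv 1\bmod 2$ is the Hensel root furnishing the rational $2$-torsion point $(e,0)$, and for each prescribed $\alpha\equiv 1\bmod 4$ of square norm to solve $x(P)-\beta\equiv\alpha\pmod{(L^\times_2)^2}$ by lifting an approximate solution via Hensel's lemma; the point is that the hypothesis $\alpha\equiv 1\bmod 4$ supplies exactly the $2$-adic precision required for the lift, in the same spirit as the valuation bookkeeping in the proof of Lemma \ref{square}. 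I expect this explicit construction — tracking square classes in the $\BQ_2$- and $\BQ_2(\omega)$-components simultaneously while respecting the square-norm constraint — rather than the order counts, to be where the real work lies.
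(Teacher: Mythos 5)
Parts (1) and (2) and the index computation in (3) are essentially correct and follow the same counting strategy as the paper: compare $\#\big(E(\BQ_q)/2E(\BQ_q)\big)=|2|_q^{-1}\#E(\BQ_q)[2]$ with $\#(\fO^\times_q/(\fO^\times_q)^2)_\Box$, the latter computed as $\#(\fO^\times_q/(\fO^\times_q)^2)/\#(\BZ^\times_q/(\BZ^\times_q)^2)$ once the norm is known to be surjective modulo squares, and then invoke the inclusion of Lemma \ref{square}. Your variations (connectivity of $E(\BR)$ in place of the triviality of $(L_\infty^\times/(L_\infty^\times)^2)_\Box$; elementary residue-field norm computations in place of the local-class-field-theory argument that the norm index is odd) are fine.

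The genuine gap is in the second half of (3), and you have flagged it yourself: you never actually produce a point of $E(\BQ_2)$ realizing the nontrivial class of the subgroup $W$ of classes represented by units $\equiv 1\bmod 4$ of square norm. "Lift an approximate solution via Hensel's lemma" is not yet an argument here, because the condition $x(P)-\beta\equiv\alpha \pmod{(L_2^\times)^2}$ is a constraint on square classes in both components of $L_2=\BQ_2\times\BQ_2(\gamma)$ simultaneously, coupled with the requirement that $x^3-27p^{2j}$ be a square in $\BQ_2^\times$ so that $y(P)$ exists; this is not a single polynomial equation to which Hensel applies directly, and no approximate solution is exhibited. Moreover, your phrasing "for each prescribed $\alpha$" misses the key simplification: the order counts you have already done show that $W$ has order $2$ (in the quadratic component, $(1+4\BZ_2[\gamma])/(1+2\BZ_2[\gamma])^2$ has order $2$, and the square-norm condition then pins down the $\BZ_2$-component), so one only needs to hit a single nontrivial class. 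The paper does this cleanly via the formal group: for $P\in\wh{E}(2\BZ_2)$ with parameter $t=-x/y$ one has $x(P)=t^{-2}+O(t^2)$ (since $a_1=a_2=a_3=0$), hence $\delta_2(P)=x(P)-\beta\equiv 1-\beta t^2 \bmod (L_2^\times)^2$; taking $t=2u$ with $u\in\BZ_2^\times$ gives $1-4\gamma u^2$, which is the nontrivial class of $W$ because $\mathrm{Tr}(\gamma u^2)$ is odd. Your proof is incomplete until some such explicit construction is supplied.
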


\begin{proof}

Note that $L_\infty=\BC\times \BR$ and we know that $(L^\times_\infty/(L^\times_\infty)^2)_\Box=1$, thus $\mathrm{im}(\delta_\infty)$ is trivial and (1) follows.

For (2) and (3), we know that the norm of $L^\times_q$ in $\BQ^\times_q$ has index $3$ by local class field theory. Thus, in the commutative diagram
\[\xymatrix{
  0  \ar[r]^{} & (\fO_q^\times)^2 \ar[d]_{} \ar[r]^{} & \fO^\times_q \ar[d]_{N} \ar[r]^{} & \fO^\times_q/(\fO^\times_q)^2 \ar[d]_{\lambda} \ar[r]^{} & 0 \\
  0 \ar[r]^{} & (\BZ^\times_q)^2 \ar[r]^{} & \BZ^\times_q \ar[r]^{} & \BZ^\times_q/(\BZ^\times_q)^2 \ar[r]^{} & 0   }\]
where $N$ is the restriction of the norm map from $L_q$ to $\BQ_q$, we see that $\lambda$ is surjective. Furthermore, from the exact sequence
\[0\ra (\fO^\times_q/(\fO^\times_q)^2)_\Box\ra \fO^\times_q/(\fO^\times_q)^2\stackrel{\lambda}{\longrightarrow}\BZ^\times_q/(\BZ^\times_q)^2\ra 0,\]
we obtain
\[\#(\fO^\times_q/(\fO^\times_q)^2)_\Box=\frac{\#(\fO^\times_q/(\fO^\times_q)^2)}{\#(\BZ^\times_q/(\BZ^\times_q)^2)}.\]
Note that $\#(\BZ^\times_q/(\BZ^\times_q)^2)=2$ or $2^2$ according as $q \neq 2$ or $q=2$. Suppose $x^3-27p^{2j}$ is a product of $\ell$ irreducible polynomials modulo $q$. For $q\neq 2$, we have $\#(\fO^\times_q/(\fO^\times_q)^2)=2^\ell$. For $q=2$, note that we have $\fO_2^\times=\BZ_2^\times \times \BZ_2[\gamma]^\times$ where $\gamma$ is a root of the irreducible polynomial $x^2+3\sqrt[3]{p^{2j}}x+(3\sqrt[3]{p^{2j}})^2\in \BZ_2[x]$ from the proof of Lemma \ref{square}. Note also that $(1+2\BZ_2[\gamma])^2$ is a subgroup of $1+4\BZ_2[\gamma]$ of index $2$. Thus we have $\#(\fO^\times_2/(\fO^\times_2)^2)=2^{2+3}=2^{5}$. It follows that $\#(\fO^\times_q/(\fO^\times_q)^2)_\Box=2^{\ell-1}$ or $2^3$ according as $q\neq 2$ or $q=2$. On the other hand, a direct computation shows that
\[\#(E(\BQ_q)[2])=2^{\ell-1}.\]
Furthermore, since $E(\BQ_q)$ has a subgroup isomorphic to $\BZ_q$ of finite index, we have
$\#(E(\BQ_q)/2E(\BQ_q))=\#(E(\BQ_q)[2])=2^{\ell-1}$ or $2\cdot \#E(\BQ_q)[2]=2^2$ according as $q \neq 2$ or $q=2$.
Therefore (2) follows, and it remains to show that $\mathrm{im}(\delta_2)$ contains the subgroup $(1+4\fO_2/(\fO^\times_2)^2)_\Box=(1+4\BZ_2[\gamma]/(\BZ_2[\gamma]^\times)^2)$ of order $2$.

Let $\wh{E}$ be the formal group of $E$ over $\BZ_2$ given by \eqref{min-model}. Let $P\in \wh{E}(2\BZ_2)$, and write $t=-\frac{x}{y}$ for the uniformizer of $\wh{E}$. Then we have
\[x(P)=t^{-2}+O(t^2)\]
since $a_1=a_2=a_3=0$. Then
$$
  \delta_2(P)=x(P)-\beta=t^{-2}-\beta+O(t^2)\equiv 1-\beta t^2\mod (L^\times_2)^2,
$$
where the last equality is obtained by noting that $t\in 2\BZ_2$ and that the units in $\fO_2$ congruent to $1$ modulo $16$ lie in $(\fO_2^\times)^2$. Now, take $P\in \widehat{E}(2\BZ_2)$ corresponding to $t=2u\in 2\BZ_2\backslash 4\BZ_2$.  Then $\delta_2(P)\equiv 1-4\gamma u^2$ clearly lies in the non-trivial class of $(1+4\BZ_2[\gamma]/(\BZ_2[\gamma]^\times)^2)$. This concludes the proof of  Lemma \ref{loc-k}.
\end{proof}

We can now state a relation between the $2$-Selmer group $\Sel_2(E)$ of $E$ and the $2$-class group of $L=\BQ(\sqrt[3]p)$. Write $\epsilon(E/\BQ)$ for the global root number of $E/\BQ$. Recall from the introduction that  $\rank_2\left(Cl(F)\right):=\dim_{\BF_2}(Cl(F)/2 Cl(F))$ denotes the $2$-rank of $Cl(F)$.

\begin{prop}\label{rel2}
Let $k=\mathrm{rank}_2(Cl(L))$. Then
$$
\mathrm{dim}_{\BF_2}\Sel_2(E)=
 \left\{ \begin{array}{ll}
k&\mbox{if $\epsilon(E/\BQ)=(-1)^k$,}\\
k+1 & \mbox{if $\epsilon(E/\BQ)=(-1)^{k+1}$.}\\
       \end{array} \right.
$$
\end{prop}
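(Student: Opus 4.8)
The plan is to realize $\Sel_2(E)$ as a codimension-at-most-one subgroup of an arithmetic group built from the units and the $2$-class group of $L$, and then to resolve the single remaining ambiguity by a parity argument rather than by a direct local computation at $2$.

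First I would use Lemma \ref{loc-k} to pin down which classes can lie in $\Sel_2(E)$. For $q \neq 2, \infty$ the local condition $\mathrm{im}(\delta_q) = (\fO_q^\times/(\fO_q^\times)^2)_\Box$ forces an element $\alpha \in \Sel_2(E)$ to have even valuation at every prime of $L$ above $q$; combined with the square-norm condition, this also makes the condition at $\infty$ automatic, since $(L_\infty^\times/(L_\infty^\times)^2)_\Box$ is trivial. Writing
\[W := \{\alpha \in L^\times/(L^\times)^2 : \ord_v(\alpha) \text{ is even for every finite place } v\}\]
and $W_\Box := W \cap (L^\times/(L^\times)^2)_\Box$, I obtain $\Sel_2(E) = \{\alpha \in W_\Box : \mathrm{loc}_2(\alpha) \in \mathrm{im}(\delta_2)\}$.

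Next I would compute $\dim_{\BF_2} W_\Box$. The standard exact sequence $1 \to \fO^\times/(\fO^\times)^2 \to W \to Cl(L)[2] \to 1$, together with the facts that $L$ has unit rank $1$ and only $\pm 1$ as roots of unity, gives $\dim_{\BF_2} \fO^\times/(\fO^\times)^2 = 2$ and hence $\dim_{\BF_2} W = 2 + k$. Every $\alpha \in W$ has $N_{L/\BQ}(\alpha) \in \{\pm 1\}$ modulo squares (its ideal is a square), and the norm is surjective onto $\{\pm 1\}$ because $N_{L/\BQ}(-1) = -1$; therefore the square-norm condition cuts the dimension down by exactly one, giving $\dim_{\BF_2} W_\Box = k + 1$. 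Since $\mathrm{im}(\delta_2)$ has index $2$ in $(\fO_2^\times/(\fO_2^\times)^2)_\Box$ by Lemma \ref{loc-k}(3), the condition at $2$ is the vanishing of a single $\BF_2$-linear functional on $W_\Box$, so
\[\dim_{\BF_2}\Sel_2(E) \in \{k, k+1\}.\]

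Rather than evaluate this functional, I would fix the answer by parity. From the exact sequence $0 \to E(\BQ)/2E(\BQ) \to \Sel_2(E) \to \Sha(E)[2] \to 0$, the triviality of $E(\BQ)_{\tor}$ gives $\dim_{\BF_2} E(\BQ)/2E(\BQ) = \rank(E)$, while the finiteness of $\Sha(E)$ (by \cite{rubin87} when $\rank(E)=0$ and by Corollary \ref{main1+} when $\rank(E)=1$) together with Cassels' pairing forces $\dim_{\BF_2}\Sha(E)[2]$ to be even. Hence $\dim_{\BF_2}\Sel_2(E) \equiv \rank(E) \pmod 2$. Finally, the sign of the functional equation is $\epsilon(E/\BQ)$, and the Gross--Zagier--Kolyvagin theorem together with the rank-zero CM results give $\rank(E) = \ord_{s=1} L(E,s)$, so $(-1)^{\rank(E)} = \epsilon(E/\BQ)$ and therefore $(-1)^{\dim_{\BF_2}\Sel_2(E)} = \epsilon(E/\BQ)$. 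Since $k$ and $k+1$ have opposite parities, this congruence selects exactly one of the two candidate values, yielding the stated dichotomy. The main obstacle is precisely the index-$2$ condition at $2$, which resists direct evaluation; the device that overcomes it is to confine $\dim_{\BF_2}\Sel_2(E)$ to a two-element set and then read off its parity from the root number.
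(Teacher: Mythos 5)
Your argument is correct, and its skeleton coincides with the paper's: both confine $\dim_{\BF_2}\Sel_2(E)$ to the two-element set $\{k,k+1\}$ and then select the correct value by a parity statement. The paper realizes the squeeze by exhibiting explicit subgroups $N_1\subseteq \Sel_2(E)\subseteq N_2$, with $N_1\simeq \Hom(\Gal(M/L),\mu_2)$ of order $2^k$ (where $M$ is the maximal everywhere-unramified elementary abelian $2$-extension of $L$) and $\dim_{\BF_2}N_2=k+1$; your $W_\Box$ is precisely $N_2$, and your observation that the condition at $2$ is the kernel of a single $\BF_2$-linear functional on $W_\Box$ yields the same lower bound $k$ without introducing $N_1$ (one small point worth making explicit: the functional is well defined because elements of $W_\Box$ have even valuation at the primes above $2$, so $\mathrm{loc}_2$ does land in $(\fO_2^\times/(\fO_2^\times)^2)_\Box$). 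The genuine divergence is the parity input. The paper quotes Monsky's unconditional $2$-parity theorem \cite{monsky}, which gives $(-1)^{\dim_{\BF_2}\Sel_2(E)}=\epsilon(E/\BQ)$ directly since $E(\BQ)[2]=0$. You instead derive the same congruence from the finiteness of $\Sha(E)$, the evenness of $\dim_{\BF_2}\Sha(E)[2]$ via the Cassels pairing, and the agreement of algebraic and analytic rank; this is legitimate for the curves at hand because the required finiteness of $\Sha$ is available (Theorem \ref{main1} together with \cite{rubin87} in the rank-zero cases, Corollary \ref{main1+} in the rank-one cases), and it has the virtue of avoiding Monsky's theorem. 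What it costs is generality and logical independence: your proof of Proposition \ref{rel2} now rests on the deep $L$-function results of Sections \ref{section2}--\ref{section3} and on Gross--Zagier--Kolyvagin, whereas the citation of \cite{monsky} keeps the $2$-descent of Section \ref{section4} self-contained.
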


\begin{proof}
Since we may follow the methods in \cite{cl}, we will only state the key ideas in the argument. We define two subgroups
\[N_1=\{\alpha\in L^\times/(L^\times)^2: L(\sqrt{\alpha})/L \textrm{ is unramified}\}\]
and
\[N_2=\{\alpha\in L^\times/(L^\times)^2: \alpha>0, (\alpha)=I^2, I\subset L \textrm{ is a fractional ideal}\}\]
of $(L^\times/(L^\times)^2)_\Box$. Then from the description of the local Kummer map given in \eqref{loc-k}, we see that the $2$-Selmer group
$\Sel_2(E)$ satisfies
\[N_1\subseteq \Sel_2(E)\subseteq N_2. \]
Furthermore, by Kummer theory, we have
\[N_1\simeq \Hom(\Gal(M/L),\mu_2)\]
where $M$ is the maximal abelian extension of exponent two unramified everywhere, so that $\Gal(M/L)=Cl(L)[2]$. Note that $\#(Cl(L)[2])=\#(Cl(L)/2Cl(L))$, since $Cl(L)$ is a finite group. Thus we have
\[\#(N_1)=2^k \text{ where }\dim_{\BF_2} Cl(L)[2]=k.\]
By Dirichlet's unit theorem, the free part of the group $\CO^\times_L$ of units in $L$ has rank one. Let $u_L$ be a generator. By choosing $-u_L$ or $u_L$,
we may assume $u_L>0$, and by choosing $u_L$ or $u_L^{-1}$, we may assume $u_L>1$. Now, we define a map
\[N_2\ra Cl(L)[2]\quad \alpha\mapsto [I]\]
where $I$ is a fractional ideal of $L$ such that $I^2=(\alpha)$ and $[I]$ denotes its ideal class in $Cl(L)$. This map
is surjective, and the kernel of this map is given by $u^\BZ_L/u^{2\BZ}_L\simeq \BZ/2\BZ$. Hence we have
\[\dim_{\BF_2}N_2=k+1.\]
From all of the above considerations and the $2$-parity conjecture proved in \cite{monsky}, we obtain the assertion of Proposition \ref{rel2}. This also concludes the proof of Theorem \ref{main2}.
\end{proof}

\begin{appendix}

\section{Numerical examples}\label{appendixB}
Let $p\equiv 2,5\bmod 9$ be an odd prime number. Let $L=\BQ(\sqrt[3]{p})$, and write $Cl(L)$ for the ideal class group of $L$. We ran a numerical test for primes $p<1000000$, and obtained that $1629$ out of $13068$ primes $p\equiv 5\bmod 9$, and $1852$ out of $13099$ primes $p\equiv 2 \bmod 9$ satisfy condition
\begin{equation}\label{nontriv}\rank_2(Cl(L))=\dim_{\BF_2}\left(Cl(L)/2Cl(L)\right)\geq 2
\end{equation}
from Theorem \ref{main2}.
The following lists are small hand-picked samples of primes $p$ such that $Cl(L)$ satisfies \eqref{nontriv}, including all the cases when the Tate--Shafarevich group of $C_{2p}$ (resp.\ $C_{2p^2}$) is strictly bigger than $\BZ/2\BZ \oplus \BZ/2\BZ$ for $p\equiv 5\mod 9$ (resp.\ $p\equiv 2\bmod 9$) and $p<1000000$. All the numerical results were obtained using the computer algebra packages \emph{Magma} \cite{Magma} and \emph{PARI/GP} \cite{PARI2}.

\begin{footnotesize}

\begin{equation}\nonumber
\begin{array}{lll}
p\equiv 5 \bmod 9 &Cl(L)&\Sha(C_{2p})[2]\\
\hline
 113 &  \BZ/2\BZ \oplus\BZ/2\BZ & \BZ/2\BZ \oplus\BZ/2\BZ \\
 3209 & \BZ/2\BZ \oplus\BZ/68\BZ & \BZ/2\BZ \oplus\BZ/2\BZ \\
 9941 & \BZ/2\BZ \oplus\BZ/4\BZ & \BZ/2\BZ \oplus\BZ/2\BZ \\
15053 & \BZ/2\BZ \oplus\BZ/14\BZ& \BZ/2\BZ \oplus\BZ/2\BZ \\
17573 & \BZ/2\BZ \oplus\BZ/374\BZ & \BZ/2\BZ \oplus\BZ/2\BZ \\
18257 &  \BZ/2\BZ \oplus\BZ/170\BZ & \BZ/2\BZ \oplus\BZ/2\BZ \\
 24197 &  \BZ/2\BZ \oplus\BZ/20\BZ & \BZ/2\BZ \oplus\BZ/2\BZ \\
32009 & \BZ/2\BZ \oplus\BZ/16\BZ & \BZ/2\BZ \oplus\BZ/2\BZ\\
35969 & \BZ/2\BZ \oplus\BZ/140\BZ& \BZ/2\BZ \oplus\BZ/2\BZ \\
40577 & \BZ/4\BZ \oplus\BZ/28\BZ & \BZ/2\BZ \oplus\BZ/2\BZ\\
41981 & \BZ/2\BZ \oplus\BZ/26\BZ & \BZ/2\BZ \oplus\BZ/2\BZ\\
46229 & \BZ/2\BZ \oplus \BZ/2\BZ \oplus \BZ/4\BZ & \BZ/2\BZ \oplus\BZ/2\BZ\\
54869 & \BZ/2\BZ \oplus\BZ/1190\BZ & \BZ/2\BZ \oplus\BZ/2\BZ\\
61169 &  \BZ/2\BZ \oplus\BZ/50\BZ & \BZ/2\BZ \oplus\BZ/2\BZ \\
61547 & \BZ/2\BZ \oplus\BZ/314\BZ & \BZ/2\BZ \oplus\BZ/2\BZ \\
73013 & \BZ/2\BZ \oplus \BZ/2\BZ \oplus \BZ/2\BZ & \BZ/2\BZ \oplus\BZ/2\BZ\\
 81077 & \BZ/2\BZ \oplus \BZ/2\BZ \oplus \BZ/2\BZ \oplus\BZ/2\BZ & \BZ/2\BZ \oplus \BZ/2\BZ \oplus \BZ/2\BZ \oplus\BZ/2\BZ \\
  97943 & \BZ/2\BZ \oplus\BZ/10\BZ& \BZ/2\BZ \oplus\BZ/2\BZ \\
 166667 & \BZ/2\BZ \oplus \BZ/2\BZ \oplus \BZ/14\BZ & \BZ/2\BZ \oplus\BZ/2\BZ\\
169007 & \BZ/4\BZ \oplus\BZ/124\BZ & \BZ/2\BZ \oplus\BZ/2\BZ\\
195581 & \BZ/4\BZ \oplus\BZ/184\BZ  & \BZ/2\BZ \oplus\BZ/2\BZ\\
206489 & \BZ/4\BZ \oplus\BZ/20\BZ & \BZ/2\BZ \oplus\BZ/2\BZ\\
483017 &  \BZ/2\BZ \oplus \BZ/2\BZ \oplus\BZ/2\BZ \oplus\BZ/4\BZ  & \BZ/2\BZ \oplus \BZ/2\BZ \oplus\BZ/2\BZ \oplus\BZ/2\BZ\\
635909 &  \BZ/2\BZ \oplus \BZ/2\BZ \oplus\BZ/2\BZ \oplus\BZ/2\BZ &  \BZ/2\BZ \oplus \BZ/2\BZ \oplus\BZ/2\BZ \oplus\BZ/2\BZ \\
805073 & \BZ/2\BZ \oplus \BZ/2\BZ \oplus\BZ/2\BZ \oplus\BZ/2\BZ &  \BZ/2\BZ \oplus \BZ/2\BZ \oplus\BZ/2\BZ \oplus\BZ/2\BZ \\
964589 & \BZ/2\BZ \oplus \BZ/2\BZ \oplus\BZ/2\BZ \oplus\BZ/2\BZ  &  \BZ/2\BZ \oplus \BZ/2\BZ \oplus\BZ/2\BZ \oplus\BZ/2\BZ \\

 \hline
\end{array}
\end{equation}

\begin{equation}\nonumber
\begin{array}{lll}
p\equiv 2 \bmod 9 &Cl(L)&\Sha(C_{2p^2})[2]\\
\hline
443 & \BZ/2\BZ \oplus\BZ/2\BZ  & \BZ/2\BZ \oplus\BZ/2\BZ \\
 857 & \BZ/2\BZ \oplus\BZ/28\BZ & \BZ/2\BZ \oplus\BZ/2\BZ \\
 4799 &  \BZ/2\BZ \oplus \BZ/2\BZ \oplus \BZ/20\BZ & \BZ/2\BZ \oplus\BZ/2\BZ\\
 5987 & \BZ/2\BZ \oplus\BZ/64\BZ & \BZ/2\BZ \oplus\BZ/2\BZ \\
 9011 & \BZ/2\BZ \oplus\BZ/4\BZ & \BZ/2\BZ \oplus\BZ/2\BZ \\
 9749 & \BZ/4\BZ \oplus\BZ/16\BZ & \BZ/2\BZ \oplus\BZ/2\BZ \\
 13043 & \BZ/2\BZ \oplus\BZ/8\BZ & \BZ/2\BZ \oplus\BZ/2\BZ \\
 17579 & \BZ/2\BZ \oplus \BZ/2\BZ \oplus \BZ/230\BZ & \BZ/2\BZ \oplus\BZ/2\BZ \\
 26111 & \BZ/2\BZ \oplus\BZ/52\BZ & \BZ/2\BZ \oplus\BZ/2\BZ \\
 31547 &  \BZ/2\BZ \oplus\BZ/50\BZ & \BZ/2\BZ \oplus\BZ/2\BZ \\
 36263 & \BZ/2\BZ \oplus\BZ/70\BZ & \BZ/2\BZ \oplus\BZ/2\BZ \\
  47387 &  \BZ/2\BZ \oplus \BZ/2\BZ \oplus \BZ/2\BZ & \BZ/2\BZ \oplus\BZ/2\BZ \\
 58727 &  \BZ/2\BZ \oplus\BZ/32\BZ & \BZ/2\BZ \oplus\BZ/2\BZ \\
 60149 & \BZ/2\BZ \oplus\BZ/74\BZ & \BZ/2\BZ \oplus\BZ/2\BZ \\
65063 & \BZ/2\BZ \oplus \BZ/4\BZ \oplus\BZ/8\BZ  & \BZ/2\BZ \oplus\BZ/2\BZ \\
 78437 & \BZ/4\BZ \oplus\BZ/4\BZ & \BZ/2\BZ \oplus\BZ/2\BZ \\
 79967 &  \BZ/2\BZ \oplus\BZ/40\BZ & \BZ/2\BZ \oplus\BZ/2\BZ \\
 96329 &  \BZ/2\BZ \oplus\BZ/22\BZ & \BZ/2\BZ \oplus\BZ/2\BZ \\
 99371 & \BZ/2\BZ \oplus \BZ/2\BZ \oplus \BZ/4\BZ & \BZ/2\BZ \oplus\BZ/2\BZ \\
  125003 & \BZ/2\BZ \oplus \BZ/4\BZ \oplus\BZ/700\BZ & \BZ/2\BZ \oplus\BZ/2\BZ\\
 167087 & \BZ/2\BZ \oplus \BZ/2\BZ \oplus \BZ/656\BZ & \BZ/2\BZ \oplus\BZ/2\BZ\\
 266663 & \BZ/2\BZ \oplus \BZ/2\BZ \oplus \BZ/2\BZ \oplus\BZ/4\BZ & \BZ/2\BZ \oplus \BZ/2\BZ \oplus \BZ/2\BZ \oplus\BZ/2\BZ \\
 402131 & \BZ/2\BZ \oplus \BZ/2\BZ \oplus\BZ/2\BZ \oplus\BZ/2\BZ & \BZ/2\BZ \oplus \BZ/2\BZ \oplus\BZ/2\BZ \oplus\BZ/2\BZ \\
424163 &  \BZ/2\BZ \oplus \BZ/2\BZ \oplus\BZ/2\BZ \oplus\BZ/4\BZ  & \BZ/2\BZ \oplus \BZ/2\BZ \oplus\BZ/2\BZ \oplus\BZ/2\BZ \\
  521831 & \BZ/2\BZ \oplus \BZ/2\BZ \oplus\BZ/2\BZ \oplus\BZ/2\BZ & \BZ/2\BZ \oplus \BZ/2\BZ \oplus\BZ/2\BZ \oplus\BZ/2\BZ \\
 916103 & \BZ/2\BZ \oplus \BZ/2\BZ \oplus\BZ/2\BZ \oplus\BZ/2\BZ  & \BZ/2\BZ \oplus \BZ/2\BZ \oplus\BZ/2\BZ \oplus\BZ/2\BZ\\
  \hline
\end{array}
\end{equation}

\end{footnotesize}

\end{appendix}

\subsection*{Acknowledgement}
The authors would like to thank John Coates for his comments on this work. The second author would like to thank Ye Tian for the suggestion given during his PhD to study the Birch--Swinnerton-Dyer conjecture for the individual curves in the product of curves. The authors are very grateful to the referee whose valuable comments lead to an improvement of the quality of this paper.

The first author gratefully acknowledges the support of the SFB 1085 ``Higher invariants'' at the University of Regensburg and would also like to thank the Max Planck Institute for Mathematics in Bonn for its support and hospitality. The second author is supported by NSFC-11901332.

\vspace{20pt}

\begin{center}
\begin{tabular}{@{}p{2.5in}p{2.1in}}
Yukako Kezuka & Yongxiong Li\\
Max-Planck-Institut f\"{u}r Mathematik & Yau Mathematical Sciences Center\\
Bonn, Germany & Tsinghua University\\
\emph{ykezuka@mpim-bonn.mpg.de}& Beijing, China\\
& \emph {\it liyx\_1029@tsinghua.edu.cn} \\
\end{tabular}
\end{center}

\begin{thebibliography}{10}

 \bibitem{Magma}
 W.\ Bosma, J.\ Cannon, C.\ Playoust, {\em The Magma algebra system. I.} The user language, J.\ Symbolic Comput.\  24 (1997), 235--265.

\bibitem{CST} L.\ Cai, J.\ Shu, Y.\ Tian,  {\em Cube sum problem and an explicit Gross--Zagier formula}, Amer.\ J.\ Math.\ 139 (2017), no.\ 3, 785--816.

\bibitem{cassels62} J.\ Cassels, {\em Arithmetic on curves of genus 1. IV. Proof of the Hauptvermutung},   J. Reine Angew.\ Math.\ 211 (1962), 95--112.


\bibitem{cassels} J.\ Cassels, {\em Lectures on elliptic curves},  London Mathematical Society Student Texts, 24. Cambridge University Press, Cambridge, 1991. vi+137 pp. ISBN 0-521-41517-9; 0-521-42530-1.

\bibitem{CW} J.\ Coates, A.\ Wiles, {\em On the conjecture of Birch and Swinnerton-Dyer}, Invent.\ Math.\ 39 (1977), 223--251.


\bibitem{coates1} J.\ Coates,  {\em Lectures on the Birch--Swinnerton-Dyer conjecture}, ICCM Not.\ 1 (2013), no.\ 2, 29--46.

\bibitem{CL} J.\ Coates, Y.\ Li, {\em Non-vanishing theorems for central $L$-values of some elliptic curves with complex multiplication}, Proc.\ Lond.\ Math.\ Soc.\ (3) 121 (2020), no. 6, 1531--1578.



\bibitem{GP}B.\ Gross, J.\ Parson, {\em On the local divisibility of Heegner points},  Number Theory, Analysis and Geometry, 215--241, Springer, New York, 2012.

\bibitem{HYS}Y.\ Hu, H.\ Yin, J.\ Shu, {\em   An explicit Gross-Zagier formula related to the Sylvester conjecture},   Trans.\ Amer.\ Math.\ Soc.\ 372 (2019), no.\ 10, 6905--6925.


\bibitem{yuka}Y.\ Kezuka, {\em   On the $p$-part of the Birch--Swinnerton-Dyer conjecture for elliptic curves with complex multiplication by the ring of integers of $\Bbb Q(\sqrt{-3})$},  Math. Proc. Cambridge Philos. Soc. 164 (2018), no. 1, 67--98.


\bibitem{cl}C.\ Li, {\em   $2$-Selmer groups, $2$-class groups and rational points on elliptic curves},   Trans.\ Amer.\ Math.\ Soc.\ 371 (2019), no.\ 7, 4631--4653.


\bibitem{liv95}E.\ Liverance, {\em A formula for the root number of a family of elliptic curves},  J.\ Number Theory 51 (1995), no.\ 2, 288--305.

\bibitem{monsky}P.\ Monsky, {\em Generalizing the Birch-Stephens theorem. I.\ Modular curves},   Math.\ Z.\ 221 (1996), no.\ 3, 415--420.



 \bibitem{PARI2}
    The PARI~Group, PARI/GP version {\tt 2.11.1}, Univ.\ Bordeaux, 2018,
    \url{http://pari.math.u-bordeaux.fr/}.

\bibitem{qiu}D.\ Qiu, X.\ Zhang, {\em    Elliptic curves with CM by $\sqrt{-3}$ and $3$-adic valuations of their $L$-series},   Manuscripta Math.\ 108 (2002), no.\ 3, 385--397.


\bibitem{razar1}M.\ Razar, {\em   The non-vanishing of $L(1)$ for certain elliptic curves with no first descents},   Amer.\ J.\ Math.\ 96 (1974), 104--126.

\bibitem{razar2}M.\ Razar, {\em  A relation between the two-component of the Tate-Shafarevich group and $L(1)$ for certain elliptic curves},    Amer.\ J.\ Math.\ 96 (1974), 127--144.


\bibitem{rubin87} K.\ Rubin, {\em Tate-Shafarevich groups and $L$-functions of elliptic curves with complex multiplication},   Invent.\ Math. 89 (1987), no.\ 3, 527--559.


\bibitem{rubin91} K.\ Rubin, {\em The ``main conjectures" of Iwasawa theory for imaginary quadratic fields},  Invent.\ Math.\ 103 (1991), no.\ 1, 25--68.


\bibitem{satge}P.\ Satg\'{e}, {\em Un analogue du calcul de Heegner},  Invent.\ Math.\ 87 (1987), no.\ 2, 425--439.

\bibitem{satge2}P.\ Satg\'{e}, {\em  Groupes de Selmer et corps cubiques},   J.\ Number Theory 23 (1986), no.\ 3, 294--317.



\bibitem{sil}J.\ Silverman, {\em  The arithmetic of elliptic curves. Second edition}, Graduate Texts in Mathematics, 106. Springer, Dordrecht, 2009. xx+513 pp.\ ISBN 978-0-387-09493-9.

\bibitem{silverman}J.\ Silverman, {\em   Advanced topics in the arithmetic of elliptic curves},  Graduate Texts in Mathematics, 151. Springer-Verlag, New York, 1994. xiv+525 pp.\ ISBN 0-387-94328-5.



\bibitem{stephens}N.\ Stephens, {\em The diophantine equation $X^3+Y^3=DZ^3$ and the conjectures of Birch and Swinnerton-Dyer},    J.\ Reine Angew.\ Math.\ 231 (1968), 121--162.



\bibitem{zk87} D.\ Zagier, G.\ Kramarz,   {\em Numerical investigations related to the $L$-series of certain elliptic curves},   J.\ Indian Math.\ Soc.\ (N.S.) 52 (1987), 51--69 (1988).



\bibitem{zhao}C.\ Zhao, {\em A criterion for elliptic curves with second lowest 2-power in $L(1)$},   Math.\ Proc.\ Cambridge Philos.\ Soc.\ 131 (2001), no.\ 3, 385--404.


\bibitem{szhang}S.\ Zhang, {\em   The nonvanishing of $L(1)$ for the $L$-series of some elliptic curves},   Adv.\ in Math.\ (China) 24 (1995), no.\ 5, 439--443.

\end{thebibliography}
\end{document}